\numberwithin{equation}{section}
\newtheorem{theorem}{Theorem}[section]
\newtheorem{proposition}[theorem]{Proposition}
\newtheorem{corollary}[theorem]{Corollary}
\newtheorem{lemma}[theorem]{Lemma}
\theoremstyle{definition}
\newtheorem{definition}[theorem]{Definition}
\newcommand{\R}{\mathbb{R}}
\begin{document}

\title
 [Normalized solutions to a class of Kirchhoff equations]
 {Normalized solutions to a class of Kirchhoff equations with Sobolev critical exponent}\footnotetext{*This work was supported by National Natural Science Foundation of China (Grant Nos. 11771166, 11901147) and Hubei Key Laboratory of Mathematical Sciences and Program for Changjiang Scholars and Innovative Research Team in University $\# $ IRT17R46.}

\maketitle
\begin{center}
\author{Gongbao Li}
\footnote{Corresponding Author: ligb@mail.ccnu.edu.cn (G. B. Li).}
\author{Xiao Luo}
\footnote{Email addresses: luoxiaohf@163.com (X. Luo).}
\author{Tao Yang}
\footnote{Email addresses: yangt@mails.ccnu.edu.cn (T. Yang).}
\end{center}

\begin{center}

\address {1, 3 School of Mathematics and Statistics, Central China Normal University, Wuhan, 430079, P. R. China}

\address {2 School of Mathematics, Hefei University of Technology, Hefei, 230009, P. R. China}

\end{center}
\maketitle

\begin{abstract}
In this paper, we consider the existence and asymptotic properties of solutions to the following Kirchhoff equation
\begin{equation}\label{1}\nonumber
- \Bigl(a+b\int_{{\R^3}} {{{\left| {\nabla u} \right|}^2}}\Bigl) \Delta u
   =\lambda u+ {| u |^{p - 2}}u+\mu {| u |^{q - 2}}u \text { in } \mathbb{R}^{3}
\end{equation}
under the normalized constraint $\int_{{\mathbb{R}^3}} {{u}^2}=c^2$,
where $a\!>\!0$, $b\!>\!0$, $c\!>\!0$, $2\!<\!q\!<\!\frac{14}{3}\!<\!
p\!\leq\!6$ or $\frac{14}{3}\!<\!q\!<\! p\!\leq\! 6$, $\mu\!>\!0$ and $\lambda\!\in\!\R$ appears as a Lagrange multiplier. In both cases for the range of $p$ and $q$, the Sobolev critical exponent $p\!=\!6$ is involved and the corresponding energy functional is unbounded from below on $S_c=\Big\{ u \in H^{1}({\mathbb{R}^3}): \int_{{\mathbb{R}^3}} {{u}^2}=c^2  \Big\}$. If $2\!<\!q\!<\!\frac{10}{3}$ and $\frac{14}{3}\!<\!
p\!<\!6$, we obtain a multiplicity result to the equation. If $2\!<\!q\!<\!\frac{10}{3}\!<\!
p\!=\!6$ or $\frac{14}{3}\!<\!q\!<\! p\!\leq\! 6$, we get a ground state solution to the equation. Furthermore, we derive several asymptotic results on the obtained normalized solutions.

Our results extend the results of N. Soave (J. Differential Equations 2020 $\&$ J. Funct. Anal. 2020), which studied the nonlinear Schr\"{o}dinger equations with combined nonlinearities, to the Kirchhoff equations. To deal with the special difficulties created by the nonlocal term
$({\int_{{\R^3}} {\left| {\nabla u} \right|} ^2}) \Delta u$ appearing in  Kirchhoff type equations, we develop a perturbed Pohozaev constraint approach and we find a way to get a clear picture of the profile of the fiber map via careful analysis. In the meantime, we need some subtle energy estimates under the $L^2$-constraint to recover compactness in the Sobolev critical case.




{\bf Key words}: Kirchhoff equation; Sobolev critical exponent; Normalized solutions; Asymptotic property; Variational methods.

{\bf 2010 Mathematics Subject Classification }: 35A15, 35B33, 35B38, 35B40
\end{abstract}

\maketitle

\section{Introduction and Main Result}

\setcounter{equation}{1}

This paper concerns the existence of solutions $(u,{\lambda})\in H^{1}({\mathbb{R}^3})\times {\mathbb{R}}$ to the following Kirchhoff equation
$$
- \Bigl(a+b\int_{{\R^3}} {{{\left| {\nabla u} \right|}^2}} \Bigr)\Delta u
   =\lambda u+ {| u |^{p - 2}}u+\mu {| u |^{q - 2}}u \text { in } \mathbb{R}^{3}
\eqno (1.1)_{\lambda}   $$
under the constraint
\begin{equation}\label{eq1.2}
  \int_{{\mathbb{R}^3}} {{u}^2}=c^2,
\end{equation}
where $a\!>\!0$, $b\!>\!0$, $c\!>\!0$, $2\!<\!q\!<\! p\!\leq\!6$ and $\mu\!>\!0$.

Letting $\lambda\in\R$, we say that a function $u\!\in\! H^{1}(\mathbb{R}^{3})$ is a weak solution to $(1.1)_{\lambda}$ if
$$\Bigl(a\!+\!b\int_{{\R^3}} {{{\left| {\nabla u} \right|}^2}} \Bigr)\int_{{\R^3}}\nabla u\nabla\varphi \!-\!\mu\int_{\mathbb{R}^{3}} \left|u\right|^{q\!-\!2} u {\varphi}\!-\! \int_{{\mathbb{R}^3}} \left|u\right|^{p\!-\!2} u {\varphi}\!-\!\lambda \int_{{\R^3}} u{\varphi}=0,~~~~\forall \varphi \in H^{1}(\mathbb{R}^{3}).$$
For fixed $\lambda$, equation $(1.1)_{\lambda}$ has been extensively studied, see e.g. \cite{dps,hl,hz2,ly,pz} and the references therein.

Alternatively, letting $c\!>\!0$ be fixed, we aim at finding a real number $\lambda \!\in\! {\mathbb{R}}$ and a function $u\!\in\! H^{1}({\mathbb{R}^3})$ solving $(1.1)_{\lambda}$ with ${||u||}_2\!=\!c$. Physicists call a solution $u$ of $(1.1)_{\lambda}$ with ${||u||}_2\!=\!c$ a normalized solution, and it can be obtained by searching critical points of the energy functional
\begin{align} \label{Enef}
E_{\mu}(u)= \frac{a}{2} {\| {\nabla u} \|}_2^2  + \frac{b}{4}{\| \nabla u \|}_2^4- \frac{1}{p}{\|u \|}_p^p-\frac{\mu}{q}{\| u \|}_q^q,~~~~\mu\geq 0
\end{align}
on the constraint
$$S_c: =\Big\{ u \in H^{1}({\mathbb{R}^3}): {||u||}_2^2=c^2  \Big\}$$
with Lagrange multipliers $\lambda$. We call $\frac{14}{3}$ the $L^2$-critical exponent for $(1.1)_{\lambda}$, since
$\inf _{u \in S_{c}} E_{\mu}(u)\!>\!-\infty$ if $q,p\in(2,\frac{14}{3})$ and $\inf _{u \in S_{c}} E_{\mu}(u)\!=\!-\infty$ if $\frac{14}{3}<q\leq6$ or $\frac{14}{3}<p\leq6$.

Taking $a\!=\!1$ and $b\!=\!0$, then $(1.1)_{\lambda}$ reduces to the classical Schr\"{o}dinger equation:
\begin{align}  \label{classicalNLSt}
-\Delta u=\lambda u+ {| u |^{p - 2}}u+\mu {| u |^{q - 2}}u \text { in } \mathbb{R}^{3}.
\end{align}
T. Cazenave and P.-L. Lions \cite{CaPl} and the very recent works of N. Soave \cite{NsAe,NSaE}, L. Jeanjean et al. \cite{eHtR}, L. Jeanjean and T. T. Le \cite{jeTl} are concerned with \eqref{classicalNLSt} in the more general cases
\begin{align} \label{classicalNLSG}
-\Delta u=\lambda u+ {| u |^{p - 2}}u+\mu {| u |^{q - 2}}u \text { in } \mathbb{R}^{N},
\end{align}
where $N\!\geq\!1$, $\mu\in\R$, $p\!\in\!(2,2^*]$, $q\!\in\!(2,2^*)$ and  $2^*:=\frac{2N}{(N-2)^{+}}$. It is worth pointing out that, L. Jeanjean and T. T. Le \cite{jeTl} solved an open question raised by N. Soave \cite{NSaE} if $N\geq4$. Some of their results on normalized solutions to (\ref{classicalNLSG}) are summarized in the following table: 
$$
\begin{array}   {|c|c|c|c|c|}
\hline \text { $N$ } & {\text { $\mu$ }} & {\text {$p$ and $q$}}
& \makecell[c]{\text{classifications of solutions}} &
{\text {references }} \\
\hline \text { $N\geq1$ } & {\text { $\mu> 0$ }} & {\text {$2<q<p\leq 2+\frac{4}{N}$}}  & {\text {a globla minimizer}} & {\text { \cite{CaPl,NsAe} }} \\
\hline \text { $N\geq1$ } & {\text { $\mu\!<\!0$ }} & {\text {$2<q\leq2+\frac{4}{N}<p<2^*$}}  & {\text {a Mountain Pass solution }} & {\text { \cite{NsAe} }} \\
\hline \text { $N\geq1$ } & {\text { $\mu>0$ }} & {\text {$2<q<2+\frac{4}{N}<p<2^*$}}  & \makecell[c]{\text{a local minimizer};\\ \text{a Mountain Pass solution}} & \text{\cite{NsAe}} \\
\hline \text { $N\geq3$ } & {\text { $\mu>0$ }} & {\text {$2<q<2+\frac{4}{N}$,~~$p=2^*$}} & {\text{a local minimizer }} &  {\textbf{ \cite{NSaE,eHtR} }}\\
\hline \text { $N\geq3$ } & {\text { $\mu>0$ }} & {\text {$2+\frac{4}{N}\leq q<2^*$,~~$p=2^*$}} & {\text{ a Mountain Pass solution }} &  {\textbf{ \cite{NSaE} }}\\
\hline \text { $N\geq4$ } & {\text { $\mu>0$ }} & {\text {$2<q<2+\frac{4}{N}$,~~$p=2^*$}}  & \makecell[c]{\text{a local minimizer};\\ \text{a Mountain Pass solution}} & \text{\cite{jeTl}}. \\
\hline
\end{array}
$$

Problem $(1.1)_{\lambda}$ also arises in the Kirchhoff type problem
\begin{align} \label{backgroud1.1}
- M\Bigl(\int_{\Omega} {{{\left| {\nabla u} \right|}^2}} \Bigr)\Delta u
   =f(x,u) \text { in } \Omega,~~~~~~~~u=0\text { on } \partial \Omega,
\end{align}
where $\Omega \!\subset\! \mathbb{R}^{3}$ is a smooth domain, $M\!:\!\R\!\to\!\R$ is some function and $f\!:\!\Omega \!\times\! \R\!\to\!\R$ is some nonlinearity. Recalling that \eqref{backgroud1.1} with $M(t) \!=\!a\!+\!bt$ ($a, b\!>\!0$)
is related to the stationary analogue of the equation
\begin{align} \label{backgroud1.3}
{u_{tt}} - (a + b\int_\Omega  {{{\left| {\nabla u} \right|}^2}} )\Delta u = f(x,u) \text { in } \Omega\times (0,+\infty),~~~~~~~~u(x,t)=0\text { on } \partial \Omega \times [0,+\infty).
\end{align}
In \cite{Kirc}, G. Kirchhoff introduced \eqref{backgroud1.3} as an extension of the D'Alembert wave equation
\[\rho \frac{{{\partial ^2}u}}{{\partial {t^2}}} - (\frac{{{\rho _0}}}{h} + \frac{E}{{2L}}\int_0^L {|\frac{{\partial u}}{{\partial x}}{|^2}dx} )\frac{{{\partial ^2}u}}{{\partial {x^2}}} = f(x,u)\]
for free vibrations of elastic strings, where $\rho$ denotes the mass density, $u$ the lateral displacement, $h$ the cross section area, $\rho _0$ the initial axial tension, $E$ the Young modulus, $L$ the length of the string and $f$ the external force. In particular, \eqref{backgroud1.1} with $M(0)\!=\!0$ models a string with zero initial tension, and is called the degenerate Kirchhoff equation, see \cite{UrMa,PRad}. One can refer to \cite{ap,ccs,dps,hl,hz2,ly,GmFn} and the references therein for more mathematical and physical background of \eqref{backgroud1.1}.

In \cite{y}, H. Y. Ye studied $(1.1)_{\lambda}\!-\!\eqref{eq1.2}$ with $a\!>\!0$, $b\!>\!0$, $\mu\!=\!0$ and $p \!\in\!(2,6)$. By considering a global minimization problem
$$m(c,0):=\inf _{u \in S_{c}} E_{0}(u)>-\infty, $$
she proved that $m(c,0)$ is attained if and only if $p \!\in\!(2,\frac{10}{3}]$ and $c\!>\!c^*$ or $p \!\in\!(\frac{10}{3},\frac{14}{3})$ and $c\!\geq\! c^*$, where
\[
c^*:=
\left\{ {\begin{array}{*{20}{c}}
{0,~~~~~~~~~~~~~~~~~~~~~~~~~~~~~~~~2<p<\frac{10}{3}};\\
{a^{\frac{3}{4}}\|W_p\|_2,~~~~~~~~~~~~~~~~~~~~~~~~~~~~~p=\frac{10}{3}};\\
{\inf\{{c \in (0,+\infty)}: m(c,0)<0\},~~~~~~~~\frac{10}{3}<p<\frac{14}{3}},
\end{array}}
\right.\]
(see Lemma \ref{lem2.2} below for $W_p$). When $p\!=\!\frac{14}{3}$, she showed that $m(c,0)$ has no minimizers for any $c\!>\!0$. Finally, she proved the existence of solutions to $(1.1)_{\lambda}\!-\!\eqref{eq1.2}$ by using the Pohozaev constraint method if $p\!\in\!(\frac{14}{3},6)$. Later on, H. L. Guo et al. in \cite{hLYw} proved that
$$c^*:=\Big[ 2 \|W_p\|_2^{p-2}  \Big(  \frac{2a}{14-3p} \Big)^{\frac{14-3p}{4}}    \Big(  \frac{b}{3p-10} \Big)^{\frac{3p-10}{4}}   \Big]^{\frac{1}{p(1-\delta_p)}}~~~~~~~~\mbox{if}~~~~~~~~\frac{10}{3}<p<\frac{14}{3}.$$
As subsequent works of \cite{y}, H. Y. Ye in \cite{Y2,Y3} considered the existence and mass concentration of critical points for $E_{0}|_{S_c}$ if $p \!=\!\frac{14}{3}$. She also studied $(1.1)_{\lambda}\!-\!\eqref{eq1.2}$ with an extra potential $V(x)$ in \cite{Ly1}. X. Y. Zeng et al. in \cite{XzYz} proved the existence and uniqueness of solutions to $(1.1)_{\lambda}\!-\!\eqref{eq1.2}$ with $a\!>\!0$, $b\!>\!0$, $\mu\!=\!0$ and $p \!\in\!(2,6)$ by using some simple energy estimates rather than the concentration-compactness principles adopted in \cite{y}.

To our best knowledge, the existence of normalized solutions to $(1.1)_{\lambda}$ with $a\!\geq\!0$, $b\!>\!0$, $\mu\!>\!0$, $p, q\!\in\!(2,6]$ and $p\!\not=\!q$ is still unknown. Without loss of generality, we set $q<p$ and consider problem $(1.1)_{\lambda}$ in the following two cases, respectively,  \\
\indent \indent$(i)$~~~~the mixed critical case:~~~~$a\!>\!0$, $b\!>\!0$, $c\!>\!0$, $\mu\!>\!0$ and $2\!<\!q\!<\!\frac{14}{3}\!<\! p\!\leq\!6;$ \\
\indent \indent$(ii)$ the purely $L^2$-supercritical case: $a\!>\!0$, $b\!>\!0$, $c\!>\!0$, $\mu\!>\!0$ and $\frac{14}{3}\!<\!q\!<\! p\!\leq\!6.$\\
It is worth pointing out that in both $(i)$ and $(ii)$, we cover the Sobolev critical case $p\!=\!6$. 

To state our main results, we introduce a definition and some frequently used constants. Following \cite{JpLn}, we say that $\tilde{u}\in{{H}}^{1}(\mathbb{R}^{3})$ is a ground state of $E_{\mu}|_{S_c}$ if
$$
d\left.E_{\mu}\right|_{S_{c}}(\tilde{u})=0 \quad \text { and } \quad E_{\mu}(\tilde{u})=\inf \left\{E_{\mu}(u): ~~~~d\left.E_{\mu}\right|_{S_{c}}(u)=0, ~~\text{and}~~u \in S_{c}\right\}.
$$
For $p,q\in(2,6]$, we introduce two frequently used constants:
\begin{align} \label{gs1.10}
\begin{gathered}
\delta_q\!=\!\frac{3(q\!-\!2)}{2q},~~~~ \delta_p\!=\!\frac{3(p\!-\!2)}{2p}.
\end{gathered}
\end{align}
Notice that $\delta_q,\delta_p\!\in\!(0,1)$ and $\delta_6\!=\!1$. In addition, we see that
$$4\!<\!q\delta_q\!<\!p\delta_p~~~~\mbox{if}~~~~
\frac{14}{3}\!<\!q\!<\! p\!<\! 6;~~~~~~~~q\delta_q\!<\!2\!<\!4\!<\!p\delta_p
~~~~\mbox{if}~~~~2\!<\!q\!<\!\frac{10}{3}~~~~\mbox{and}~~~~\frac{14}{3}\!<\!
p\!<\!6.$$

For $2\!<\!q\!<\!\frac{10}{3}$ and $\frac{14}{3}\!<\!p\!\leq\!6$, we denote:\\
\begin{align} \label{ConAt1.10}
&\nonumber \mu^*:=\Big[\frac{\frac{a}{2}\Big( \frac{bp}{4\mathcal{C}_p^{p}} \Big)^{  \frac{2-q\delta_q}{p\delta_p-4} }}{c^{ q(1-\delta_q)+\frac{p(1-\delta_p)(2-q\delta_q)}{p\delta_p-4} }} \!+\!\frac{{(\frac{b}{4})}^{  \frac{p\delta_p-q\delta_q}{p\delta_p-4} }  {(\frac{p}{\mathcal{C}_p^{p}})}^{  \frac{4-q\delta_q}{p\delta_p-4} } }{ c^{ q(1-\delta_q)+\frac{p(1-\delta_p)(4-q\delta_q)}{p\delta_p-4} } }\Big]\frac{q\mathcal{C}_{p,q}}{\mathcal{C}_q^{q}}; \\
& \nonumber \mu_{*}\!:=\!\Big[ \frac{q(p\delta_{p}-4)b} {4(p\delta_p-q\delta_{q})\mathcal{C}_q^{q} } \Big]{\Big[ \frac{p(4-q\delta_q)b}{4(p\delta_p-q\delta_q)\mathcal{C}_p^p } \Big]}^{\frac{4-q\delta_q}{p\delta_p-4}} \frac{1}{c^{ q(1-\delta_q)+\frac{p(1-\delta_p)(4-q\delta_q)}{p\delta_p-4} }};\\
&\mu^{**}\!:=\! \frac{2(\frac{b}{\delta_q})^{\frac{q\delta_q}{4}}}{ (6-q\delta_q)
\mathcal{C}_q^{q}}\cdot \left[ \frac{12q }{4-q\delta_q}\Big( \frac{a\mathcal{S}\Lambda}{3}\!+\!\frac{b\mathcal{S}^2{\Lambda}^2}{12} \Big) \right]^{1-\frac{q\delta_q}{4}}\frac{1}{c^{q(1-\delta_q)}},
\end{align}
where $\mathcal{C}_{p,q}\!:=\!\Big( \frac{8(4-q\delta_q)}{ p\delta_p(p\delta_p-2)(p\delta_p-q\delta_q) } \Big)^{  \frac{4-q\delta_q}{p\delta_p-4} }\!-\! \Big( \frac{8(4-q\delta_q)}{ p\delta_p(p\delta_p-2)(p\delta_p-q\delta_q) } \Big)^{  \frac{p\delta_p-q\delta_q}{p\delta_p-4} }\!>\!0$, $\Lambda\!=\!\frac{b{\mathcal{S}}^2}{2}
\!+\!\sqrt{a\mathcal{S}\!+\!\frac{b^2{\mathcal{S}}^4}{4}}$, the embedding constants $\mathcal{S}$ and $\mathcal{C}_p$ are given by
$$\mathcal{S}=\inf _{u \in {{D}}^{1,2}(\mathbb{R}^{3})\setminus \{0\} }    \frac{\left\|\nabla u\right\|_{2}^{2}}{||u||_{6}^{2}},~~~~~~~~\frac{1}{\mathcal{C}_p}=\inf _{u \in {{H}}^{1}(\mathbb{R}^{3})\setminus \{0\} }    \frac{\left\|\nabla u\right\|_{2}^{\delta_p} \left\|u\right\|_{2}^{(1-\delta_p)}}{  ||u||_{p} },$$
(see Section 2 below for details).
Let ${u}_{0}$ be the unique ground state of $E_{0}|_{S_c}$ (see Lemma \ref{lemma6.5}). In the mixed critical case $2\!<\!q\!<\!\frac{14}{3}\!<\!p\!\leq\!6$, our main results are the following Theorems \ref{th1.1}-\ref{th1.3}.

\begin{theorem}\label{th1.1}
Let $a\!>\!0$, $b\!>\!0$, $c\!>\!0$, $2\!<\!q\!<\!\frac{10}{3}$, $\frac{14}{3}\!<\!p\!<\!6$ and $0\!<\!\mu\!<\!\min\{\mu_{*},\mu^{*}\}$. Then \\
$\textbf{(1)}$ $E_{\mu}|_{S_c}$ has a critical point $\tilde{u}_{c,\mu}$ at some energy level $m(c, \mu) < 0$, which is a local minimizer of $E_{\mu}$ on the set
$$
A_{R_0} :=\left\{u \in S_c : {||\nabla u||}_2<R_0\right\}
$$
for a suitable $R_0=R_0(c,\mu)>0$. Moreover, $\tilde{u}_{c,\mu}$ is a ground state of $E_{\mu}|_{S_c}$, and any ground state of $E_{\mu}|_{S_c}$ is a local minimizer of $E_{\mu}$ on $A_{R_0}$; \\
$\textbf{(2)}$ $E_{\mu}|_{S_c}$ has a second critical point of Mountain Pass type $\hat{u}_{c,\mu}$ at some energy level $\sigma(c, \mu)>0$;\\
$\textbf{(3)}$ $\tilde{u}_{c,\mu}$ solves $(1.1)_{\tilde{\lambda}_{c,\mu}}$ and $\hat{u}_{c,\mu}$ solves $(1.1)_{\hat{\lambda}_{c,\mu}}$ for some $\tilde{\lambda}_{c,\mu},\hat{\lambda}_{c,\mu}\!<\!0$. Both $\tilde{u}_{c,\mu}$ and  $\hat{u}_{c,\mu}$ are positive and radially symmetric. Moreover, $\tilde{u}_{c,\mu}$ is radially deceasing;  \\
$\textbf{(4)}$ If $\tilde{u}_{c,\mu} \!\in \! S_{c}$ is a ground state for $E_{\mu}|_{S_{c}}$, then $m(c,\mu)\!\to\! 0^-$, $||\nabla \tilde{u}_{c,\mu}||_{2} \rightarrow 0$ as $\mu \rightarrow 0^{+}$;\\
$\textbf{(5)}$ $\sigma(c, \mu)\to {m}(c, 0)$ and $\hat{u}_{c,\mu} \to {u}_{0}$ in $H^1(\R^3)$ as $\mu \rightarrow 0^{+}$, where ${m}(c,0)=E_{0}({u}_{0})$ and ${u}_{0}$ is the unique ground state of $E_{0}|_{S_c}$.
\end{theorem}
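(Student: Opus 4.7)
Following Soave \cite{NsAe}, the plan is to study, for $u\in S_c$, the $L^2$-preserving scaling $(s\star u)(x):=s^{3/2}u(sx)$ and the associated fiber map
$$\phi_u(s):=E_\mu(s\star u)=\frac{a}{2}s^2\|\nabla u\|_2^2+\frac{b}{4}s^4\|\nabla u\|_2^4-\frac{s^{p\delta_p}}{p}\|u\|_p^p-\frac{\mu\,s^{q\delta_q}}{q}\|u\|_q^q.$$
Because $q\delta_q<2<4<p\delta_p$, $\phi_u$ starts at $0$, dips below $0$ immediately and diverges to $-\infty$ as $s\to\infty$. A careful analysis of $\phi_u'$ as a four-power polynomial in $s$ shows that for $\mu<\min\{\mu_*,\mu^*\}$, these being precisely the values in \eqref{ConAt1.10} at which the Gagliardo--Nirenberg bound on the $L^q$-term still leaves a positive barrier between the two turning points, every $u\in S_c$ produces exactly two critical points $0<s_-(u)<s_+(u)$ of $\phi_u$: a strict local minimum and a strict local maximum. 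This splits the constrained Pohozaev set $\mathcal{P}_{c,\mu}:=\{u\in S_c:\phi_u'(1)=0\}$ as $\mathcal{P}^+\sqcup\mathcal{P}^-$, and isolates a threshold $R_0=R_0(c,\mu)$ for which $A_{R_0}$ contains exactly the $\star$-orbit of $\mathcal{P}^+$.

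\textbf{Local minimizer (parts (1), (3), (4)).} Set $m(c,\mu):=\inf_{A_{R_0}}E_\mu$. The analysis above yields $E_\mu\geq c_0>0$ on $\partial A_{R_0}$ and $E_\mu(s\star u)<0$ for some test $u$ and small $s>0$, hence $m(c,\mu)\in(-\infty,0)$ and any minimizing sequence stays strictly inside $A_{R_0}$. Pólya--Szegő symmetrization decreases both $\|\nabla u\|_2^2$ and $\|\nabla u\|_2^4$ while fixing $\|u\|_r$, so we may take the sequence radial; the compact embedding $H^1_{\rm rad}(\R^3)\hookrightarrow L^r(\R^3)$ for $r\in(2,6)$ then gives strong $L^p\cap L^q$ convergence, and weak lower semicontinuity of the Kirchhoff gradient terms produces the minimizer $\tilde u_{c,\mu}$. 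Testing $(1.1)_{\tilde\lambda_{c,\mu}}$ against $\tilde u_{c,\mu}$ and subtracting the Pohozaev identity gives
$$\tilde\lambda_{c,\mu}\,c^2=-(1-\delta_p)\|\tilde u_{c,\mu}\|_p^p-\mu(1-\delta_q)\|\tilde u_{c,\mu}\|_q^q<0,$$
so $\tilde\lambda_{c,\mu}<0$. Positivity and radial monotonicity come from the strict case of Pólya--Szegő. For part (4), the $L^q$-term is the sole source of negativity of $E_\mu$ on $A_{R_0}$; as $\mu\to 0^+$ the local minimum $s_-(\tilde u_{c,\mu})\to 0$, forcing $\|\nabla\tilde u_{c,\mu}\|_2\to 0$ and $m(c,\mu)\to 0^-$.

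\textbf{Mountain pass solution (parts (2), (3), (5)).} I would take
$$\sigma(c,\mu):=\inf_{\gamma\in\Gamma}\max_{t\in[0,1]}E_\mu(\gamma(t)),\quad \Gamma:=\{\gamma\in C([0,1],S_c):\gamma(0)\in A_{R_0/2},\ E_\mu(\gamma(1))<2m(c,\mu)\}.$$
Every $\gamma\in\Gamma$ must cross $\partial A_{R_0}$, on which $E_\mu\geq c_0>0$, so $\sigma(c,\mu)>0>m(c,\mu)$. Jeanjean's augmented-functional trick on $S_c\times\R$ with the $\star$-action produces a radial Palais--Smale sequence $\{u_n\}$ at level $\sigma(c,\mu)$ with the extra property $P_\mu(u_n)\to 0$; the combination $E_\mu(u_n)-\tfrac{1}{p\delta_p}P_\mu(u_n)$ controls both $\|\nabla u_n\|_2^2$ and $\|\nabla u_n\|_2^4$ (using $p\delta_p>4$), so $\{u_n\}$ is bounded in $H^1$. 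Since $p<6$, $u_n\to\hat u_{c,\mu}$ strongly in $L^p\cap L^q$ by radial compactness; the Lagrange multiplier formula above shows $\hat\lambda_{c,\mu}<0$, and a Brezis--Lieb split of both $\|\nabla\cdot\|_2^2$ and $\|\nabla\cdot\|_2^4$ upgrades weak to strong $H^1$-convergence. For part (5), as $\mu\to 0^+$ the functional degenerates to $E_0$, continuity yields $\sigma(c,\mu)\to m(c,0)$, and the uniqueness of the ground state $u_0$ (Lemma \ref{lemma6.5}) together with the same compactness argument forces $\hat u_{c,\mu}\to u_0$ in $H^1(\R^3)$.

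\textbf{Main obstacle.} The chief new difficulty compared to \cite{NsAe} is the nonlocal quartic $\tfrac{b}{4}\|\nabla u\|_2^4$, which inserts a fourth competing power into $\phi_u$ and interacts with \emph{both} the $L^2$-subcritical and $L^2$-supercritical nonlinearities. Pinning down the precise thresholds $\mu_*,\mu^*$ in \eqref{ConAt1.10} and the splitting of $\mathcal{P}_{c,\mu}$ requires a four-power analysis of $\phi_u$ considerably more delicate than the three-power one in Soave—the ``perturbed Pohozaev constraint approach'' announced in the abstract. In the Palais--Smale step, the nonlocal coefficient $a+b\int|\nabla u_n|^2$ also means that weak $H^1$-convergence alone is insufficient to identify the Kirchhoff coefficient in the limit equation; the Brezis--Lieb split on the quartic term, made possible by $p<6$ and $\hat\lambda_{c,\mu}<0$, is what closes the loop.
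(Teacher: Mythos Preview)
Your overall strategy matches the paper's: fiber-map decomposition of $\mathcal P_{c,\mu}$ into $\mathcal P^+\sqcup\mathcal P^-$ (the paper's Lemmas~\ref{lem3.3}--\ref{lem3.4}), a local minimizer in $A_{R_0}$, Jeanjean's augmented functional on $\mathbb R\times S_{c,r}$ for the mountain pass, and Proposition~\ref{prp4.1} for compactness. However, your local-minimizer step has a genuine gap.

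You write that after radial rearrangement, compact embedding gives strong $L^p\cap L^q$ convergence and ``weak lower semicontinuity of the Kirchhoff gradient terms produces the minimizer $\tilde u_{c,\mu}$.'' But $H^1_{\rm rad}\hookrightarrow L^2$ is \emph{not} compact, so the weak limit a priori satisfies only $\|\tilde u\|_2\le c$; lower semicontinuity then gives $E_\mu(\tilde u)\le m(c,\mu)$ for a function that need not lie on $S_c$, and your subsequent Lagrange-multiplier computation is premature. The paper closes this differently: after rearrangement it projects the minimizing sequence onto $\mathcal P_+^{c,\mu}$ via the fiber map, uses Lemma~\ref{lem3.6} to keep it uniformly inside $A_{R_0-\rho}$, and then applies Ekeland's principle to obtain a genuine Palais--Smale sequence for $E_\mu|_{S_c}$ with $P_\mu(u_n)\to 0$. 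Proposition~\ref{prp4.1} now applies: the Lagrange multiplier satisfies $\lambda<0$, and testing the two equations against $u_n-u$ yields $(a+bB)\|\nabla(u_n-u)\|_2^2-\lambda\|u_n-u\|_2^2\to 0$, which is what forces strong $L^2$ convergence and recovers the constraint. Two smaller points: the thresholds $\mu_*,\mu^*$ play distinct roles in the paper ($\mu<\mu^*$ gives the two-critical-point shape of $\Psi_u^\mu$, while $\mu<\mu_*$ ensures $\mathcal P_0^{c,\mu}=\emptyset$ and the natural-constraint property, Lemmas~\ref{lem3.3}--\ref{lemma1.3}), not the single ``positive barrier'' role you describe; and for part~(5) the paper needs the monotonicity $\sigma(c,\mu_2)\le\sigma(c,\mu_1)\le m(c,0)$ (Lemma~\ref{lem6.8}) both to bound $\{\hat u_\mu\}$ and to rule out $\hat\lambda=0$ in the limit, rather than an appeal to ``continuity.''
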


\begin{theorem}\label{th1.3}
Let $a\!>\!0$, $b\!>\!0$, $c\!>\!0$, $2\!<\!q\!<\!\frac{10}{3}$, $p\!=\!6$ and $0\!<\!\mu\!<\!\min\{\mu_{*},\mu^{*},\mu^{**}\}$. Then\\
$\textbf{(1)}$ $E_{\mu}|_{S_c}$ has a critical point $\tilde{u}_{c,\mu}$ at some energy level $m(c, \mu) < 0$, which is a local minimizer of $E_{\mu}$ on the set
$$
A_{R_0} :=\left\{u \in S_{c} : {||\nabla u||}_2<R_0\right\}
$$
for a suitable $R_0=R_0(c,\mu)>0$. Moreover, $\tilde{u}_{c,\mu}$ is a ground state of $E_{\mu}|_{S_c}$, and any ground state of $E_{\mu}|_{S_c}$ is a local minimizer of $E_{\mu}$ on $A_{R_0}$; \\
$\textbf{(2)}$ $\tilde{u}_{c,\mu}$ solves $(1.1)_{\tilde{\lambda}_{c,\mu}}$ for some $\tilde{\lambda}_{c,\mu}\!<\!0$. Moreover, $\tilde{u}_{c,\mu}$ is positive and radially deceasing; \\
$\textbf{(3)}$ If $\tilde{u}_{c,\mu} \!\in \! S_{c}$ is a ground state for $E_{\mu}|_{S_{c}}$, then $m(c,\mu)\!\to\! 0^-$, $||\nabla \tilde{u}_{c,\mu}||_{2} \rightarrow 0$ as $\mu \rightarrow 0^{+}$.\\
\end{theorem}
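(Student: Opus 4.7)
The strategy parallels the $L^2$-subcritical local-minimizer part of Theorem \ref{th1.1}(1), but the Sobolev critical exponent $p=6$ forces a delicate compactness step, and this is precisely where the condition $\mu<\mu^{**}$ enters.

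First I will exhibit a local-minimum geometry for $E_\mu$ on $S_c$. For $u\in S_c$ with $t:=\|\nabla u\|_2$, the Sobolev inequality and the Gagliardo-Nirenberg inequality yield the envelope
\begin{equation*}
E_\mu(u)\;\ge\;f(t)\;:=\;\frac{a}{2}t^2+\frac{b}{4}t^4-\frac{1}{6\mathcal{S}^{3}}t^6-\frac{\mu\,\mathcal{C}_q^{q}c^{q(1-\delta_q)}}{q}\,t^{q\delta_q}.
\end{equation*}
Because $q\delta_q<2<4<6$, $f(t)\to 0^-$ as $t\to 0^+$ and $f(t)\to -\infty$ as $t\to +\infty$. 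A direct study of $f'$ shows that, under $\mu<\min\{\mu_*,\mu^*,\mu^{**}\}$, $f$ has a strict local minimum at some $t_->0$ followed by a strict local maximum at some $t_+>t_-$ with $f(t_+)>0$; the role of $\mu^{**}$ is precisely to keep this positive barrier alive against the Sobolev critical term $-t^{6}/(6\mathcal{S}^{3})$, whose unconstrained maximum of $(a/2)t^2+(b/4)t^4-t^6/(6\mathcal{S}^3)$ is attained at $t_0^2=\mathcal{S}\Lambda$ and equals $\frac{a\mathcal{S}\Lambda}{3}+\frac{b\mathcal{S}^2\Lambda^2}{12}$. Choosing $R_0\in(t_-,t_+)$ with $f(R_0)>0$, and testing with the dilation $u_{0,s}(x):=s^{3/2}u_0(sx)$ with $s\to 0^+$, I get $m(c,\mu):=\inf_{A_{R_0}}E_\mu<0$, while the positive barrier $f(R_0)$ separates $A_{R_0}$ from its complement in $S_c$, forcing minimizing sequences to stay well inside $A_{R_0}$.

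The main obstacle is compactness at the Sobolev critical exponent. By Schwarz symmetrization I pick a minimizing sequence $\{u_n\}\subset A_{R_0}$ of nonnegative radially decreasing functions, bounded in $H^1_{\mathrm{rad}}(\R^3)$; after extraction $u_n\rightharpoonup\tilde{u}_{c,\mu}$ weakly in $H^1$, pointwise a.e., and strongly in $L^r$ for each $r\in(2,6)$ by the compact radial embedding, so the subcritical $L^q$ term passes to the limit. Setting $v_n:=u_n-\tilde{u}_{c,\mu}$, Brezis-Lieb splits $\|u_n\|_2^2$ and $\|u_n\|_6^6$, while the nonlocal Kirchhoff bi-quadratic expands as
\begin{equation*}
\|\nabla u_n\|_2^4=\|\nabla \tilde{u}_{c,\mu}\|_2^4+2\|\nabla \tilde{u}_{c,\mu}\|_2^2\|\nabla v_n\|_2^2+\|\nabla v_n\|_2^4+o(1).
\end{equation*}
To exclude a Sobolev critical bubble carried by $\{v_n\}$, I combine $m(c,\mu)<0$ with the sharp Kirchhoff-Sobolev lower estimate that any nontrivial concentration contributes at least $\frac{a\mathcal{S}\Lambda}{3}+\frac{b\mathcal{S}^2\Lambda^2}{12}$ to the residual energy (the minimum of $\frac{a}{2}\tau+\frac{b}{4}\tau^2-\frac{\tau^3}{6\mathcal{S}^3}$ over $\tau=\|\nabla v\|_2^2$, attained at $\tau=\mathcal{S}\Lambda$). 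The explicit form of $\mu^{**}$ in \eqref{ConAt1.10} is engineered exactly so that $|m(c,\mu)|$ stays strictly below this bubbling threshold; hence $\|v_n\|_6\to 0$, the mass decomposition then forces $\|v_n\|_2\to 0$, and $\tilde{u}_{c,\mu}\in S_c\cap A_{R_0}$ attains $m(c,\mu)$ with strong $H^1$-convergence.

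The remaining items are then standard. The Lagrange multiplier rule yields $\tilde\lambda_{c,\mu}\in\R$ with $\tilde{u}_{c,\mu}$ solving $(1.1)_{\tilde\lambda_{c,\mu}}$; combining the equation tested against $\tilde{u}_{c,\mu}$ with the Pohozaev identity (valid on the local-minimizer branch via the standard scaling argument), and using $q\delta_q<2$ together with $m(c,\mu)<0$, yields $\tilde\lambda_{c,\mu}<0$. Positivity follows from $E_\mu(|\tilde{u}_{c,\mu}|)=E_\mu(\tilde{u}_{c,\mu})$ and the strong maximum principle for $-\Delta-\tilde\lambda_{c,\mu}$, while radial monotone decrease is inherited from the symmetrization. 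The ground-state identification inside $A_{R_0}$ proceeds as in Theorem \ref{th1.1}(1) via a Pohozaev/energy comparison. For (3), choosing $s=s(\mu)\to 0^+$ optimally in $u_{0,s}$ gives $m(c,\mu)\to 0^-$; plugging back into $E_\mu(\tilde{u}_{c,\mu})\ge f(\|\nabla\tilde{u}_{c,\mu}\|_2)$, together with the behavior of $f$ near $0$ and the confinement $\|\nabla\tilde{u}_{c,\mu}\|_2<R_0$ on the minimizer branch, forces $\|\nabla\tilde{u}_{c,\mu}\|_2\to 0$.
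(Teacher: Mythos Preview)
Your compactness argument has a genuine gap. You work with a bare minimizing sequence in $A_{R_0}$ and try to exclude bubbling by claiming that ``any nontrivial concentration contributes at least $\frac{a\mathcal{S}\Lambda}{3}+\frac{b\mathcal{S}^2\Lambda^2}{12}$ to the residual energy (the minimum of $\frac{a}{2}\tau+\frac{b}{4}\tau^2-\frac{\tau^3}{6\mathcal{S}^3}$\ldots\ attained at $\tau=\mathcal{S}\Lambda$).'' This is wrong on two counts. First, that value is the \emph{maximum}, not the minimum, of the cubic; second, and more importantly, for a bare minimizing sequence there is no relation forcing $\|\nabla v_n\|_2^2$ and $\|v_n\|_6^6$ into the quantized regime $\ell=0$ or $\ell\ge\Lambda^3$. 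A small amount of concentration (small $\tau>0$) contributes a small positive residual, not the threshold, so your splitting only yields $E_\mu(\tilde u)\le m(c,\mu)$ without ruling out $\|\nabla v_n\|_2\not\to 0$. Likewise, ``the mass decomposition then forces $\|v_n\|_2\to 0$'' does not follow from $\|v_n\|_6\to 0$; radial $v_n$ can spread out and keep $L^2$ mass while losing $L^6$ mass, so you have not shown $\tilde u\in S_c$.

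The paper's route supplies exactly the missing structure. One first pushes the minimizing sequence onto $\mathcal{P}_+^{c,\mu}$ via the fiber map (Lemma~\ref{lem3.4}), then applies Ekeland's variational principle to obtain a Palais--Smale sequence $\{u_n\}\subset S_{c,r}$ at level $m(c,\mu)$ with the extra property $P_\mu(u_n)\to 0$. It is this Pohozaev constraint, together with the PS equation, that drives Proposition~\ref{prp4.2}: the weak limit $\tilde u_\mu$ solves a perturbed equation $-(a+Bb)\Delta u=\tilde\lambda u+|u|^4u+\mu|u|^{q-2}u$ with its own Pohozaev identity $Q_\mu(\tilde u_\mu)=0$, and the residual satisfies $\ell=0$ or $\ell\ge\Lambda^3$. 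If $\ell\ge\Lambda^3$ one gets $m(c,\mu)\ge \frac{a\mathcal{S}\Lambda}{3}+\frac{b\mathcal{S}^2\Lambda^2}{12}+I_\mu(\tilde u_\mu)$, and $\mu<\mu^{**}$ is used \emph{here}---not to keep the barrier positive (that is $\mu^*$), nor to bound $|m(c,\mu)|$---but to ensure $\min_{t\ge 0}\big(\frac{b}{12}t^4-\mu(\tfrac{1}{q}-\tfrac{\delta_q}{6})\mathcal C_q^q c^{q(1-\delta_q)}t^{q\delta_q}\big)>-\big(\frac{a\mathcal{S}\Lambda}{3}+\frac{b\mathcal{S}^2\Lambda^2}{12}\big)$, which forces $m(c,\mu)>0$, a contradiction. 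Strong $H^1$ convergence (hence $\tilde u_\mu\in S_c$) then comes from testing the PS equation and the limit equation against $u_n-\tilde u_\mu$, using $\tilde\lambda<0$. For part~(3) the paper simply observes $R_0(c,\mu)\to 0$ as $\mu\to 0^+$ from Lemma~\ref{lem3.1}, which immediately gives $\|\nabla\tilde u_{c,\mu}\|_2<R_0\to 0$ and then $m(c,\mu)\to 0^-$.
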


In the purely $L^2$-supercritical case $\frac{14}{3}\!<\!q\!<\! p\!\leq\!6$, we have the following results.

\begin{theorem}\label{tTH1.3}
Let $a\!>\!0$, $b\!>\!0$, $c\!>\!0$, $\frac{14}{3}\!<\!q\!<\!p\!<\!6$ and $\mu\!>\!0$. Then \\
$\textbf{(1)}$ $E_{\mu}|_{S_c}$ has a critical point of Mountain Pass type $\hat{u}_{c,\mu}$ at a positive level $\sigma(c, \mu)\!>\!0$;\\
$\textbf{(2)}$ $\hat{u}_{c,\mu}$ is a positive radial solution to $(1.1)_{\hat{\lambda}_{c,\mu}}$ for suitable $\hat{\lambda}_{c,\mu}<0$. In addition, $\hat{u}_{c,\mu}$ is a ground state of $E_{\mu}|_{S_c}$; \\
$\textbf{(3)}$ $\sigma(c, \mu)\to {m}(c, 0)$ and $\hat{u}_{c,\mu} \to {u}_{0}$ in $H^1(\R^3)$ as $\mu \rightarrow 0^{+}$, where ${m}(c,0)=E_{0}({u}_{0})$ and ${u}_{0}$ is the unique ground state of $E_{0}|_{S_c}$.
\end{theorem}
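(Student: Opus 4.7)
The plan is to treat $E_\mu|_{S_c}$ by a mountain--pass scheme on the Pohozaev manifold and to deduce $\textbf{(3)}$ by comparison with the $\mu=0$ limit. Since any solution to $(1.1)_\lambda$ in $S_c$ satisfies, via the standard Pohozaev computation, the identity
\begin{equation*}
P_\mu(u):=a\|\nabla u\|_2^2+b\|\nabla u\|_2^4-\delta_p\|u\|_p^p-\mu\delta_q\|u\|_q^q=0,
\end{equation*}
I would introduce the Pohozaev manifold $\mathcal{P}_{c,\mu}:=\{u\in S_c:P_\mu(u)=0\}$ together with the $L^2$-preserving dilation $(s\star u)(x):=e^{3s/2}u(e^sx)$. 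The induced fiber map
\begin{equation*}
\Psi_u(s):=E_\mu(s\star u)=\tfrac{a}{2}e^{2s}\|\nabla u\|_2^2+\tfrac{b}{4}e^{4s}\|\nabla u\|_2^4-\tfrac{1}{p}e^{p\delta_p s}\|u\|_p^p-\tfrac{\mu}{q}e^{q\delta_q s}\|u\|_q^q
\end{equation*}
tends to $0^+$ as $s\to-\infty$ and to $-\infty$ as $s\to+\infty$, because $\frac{14}{3}<q<p<6$ forces $2<4<q\delta_q<p\delta_p$. Using $\Psi_u'(s)=P_\mu(s\star u)$ and substituting $y=e^{2s}$, the monotonicity analysis of $y\mapsto a\|\nabla u\|_2^2+b\|\nabla u\|_2^4 y-\delta_p\|u\|_p^p y^{(p\delta_p-2)/2}-\mu\delta_q\|u\|_q^q y^{(q\delta_q-2)/2}$ (whose derivative is strictly decreasing thanks to $q\delta_q>4$) yields a unique $s(u)\in\R$ at which $\Psi_u$ attains a strict global maximum, with $s(u)\star u\in\mathcal{P}_{c,\mu}$. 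Hence $\mathcal{P}_{c,\mu}$ is a natural constraint and $\inf_{\mathcal{P}_{c,\mu}}E_\mu>0$.

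Next I would restrict to $S_{c,r}:=S_c\cap H^1_{\mathrm{rad}}(\R^3)$ and verify the mountain--pass geometry: the Gagliardo--Nirenberg inequality yields
\begin{equation*}
E_\mu(u)\ge \tfrac{a}{2}\|\nabla u\|_2^2-\tfrac{\mu}{q}\mathcal{C}_q^q c^{q(1-\delta_q)}\|\nabla u\|_2^{q\delta_q}-\tfrac{1}{p}\mathcal{C}_p^p c^{p(1-\delta_p)}\|\nabla u\|_2^{p\delta_p},
\end{equation*}
with $q\delta_q,p\delta_p>4>2$, so that $\inf_{\|\nabla u\|_2=R_0,\,u\in S_{c,r}}E_\mu(u)\ge\delta>0$ for some small $R_0,\delta$, while each fiber furnishes a point $u_1\in S_{c,r}$ with $\|\nabla u_1\|_2$ large and $E_\mu(u_1)<0$. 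This produces the minimax value
\begin{equation*}
\sigma(c,\mu):=\inf_{\gamma\in\Gamma}\max_{t\in[0,1]}E_\mu(\gamma(t))>0,
\end{equation*}
and a standard squeezing via the fiber map identifies $\sigma(c,\mu)=\inf_{\mathcal{P}_{c,\mu}\cap H^1_{\mathrm{rad}}}E_\mu$. Applying Jeanjean's device to the auxiliary functional $\tilde E_\mu(u,s):=E_\mu(s\star u)$ on $H^1_{\mathrm{rad}}\times\R$, I would extract a Palais--Smale sequence $\{u_n\}\subset S_{c,r}$ for $E_\mu|_{S_{c,r}}$ at level $\sigma(c,\mu)$ with $P_\mu(u_n)\to 0$; the principle of symmetric criticality makes this a PS sequence for $E_\mu|_{S_c}$.

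The main obstacle is the compactness of $\{u_n\}$, complicated by the nonlocal factor $(a+b\|\nabla u_n\|_2^2)$. Boundedness in $H^1$ is read off from the combination $E_\mu(u_n)-\frac{1}{q\delta_q}P_\mu(u_n)$, whose coefficients on $\|\nabla u_n\|_2^2$, $\|\nabla u_n\|_2^4$ and $\|u_n\|_p^p$ are all strictly positive because $q\delta_q>4$ and $p\delta_p>q\delta_q$; here the strict inequalities forced by $\frac{14}{3}<q<p<6$ are used essentially. Passing to a weak limit $\hat u\in H^1_{\mathrm{rad}}$ and using the compact embedding $H^1_{\mathrm{rad}}(\R^3)\hookrightarrow L^r(\R^3)$ for $r\in(2,6)$ (which applies to both $q$ and $p$ since $p<6$), we obtain strong convergence of $\|u_n\|_p^p$ and $\|u_n\|_q^q$. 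Extracting the Lagrange multipliers $\lambda_n$ from $dE_\mu|_{S_c}(u_n)\to 0$ and combining with $P_\mu(u_n)\to 0$, one finds
\begin{equation*}
\lambda_n c^2=(\delta_p-1)\|u_n\|_p^p+\mu(\delta_q-1)\|u_n\|_q^q+o(1);
\end{equation*}
positivity of $\sigma(c,\mu)$ prevents both $\|u_n\|_p$ and $\|u_n\|_q$ from vanishing, so $\lambda_n\to\hat\lambda_{c,\mu}<0$. The strict negativity of $\hat\lambda_{c,\mu}$, together with the convergence of $a+b\|\nabla u_n\|_2^2$, upgrades weak to strong $H^1$-convergence by testing the equation for $u_n-\hat u$ against itself, and $\hat u_{c,\mu}:=\hat u\in S_c$ solves $(1.1)_{\hat\lambda_{c,\mu}}$ at energy $\sigma(c,\mu)$.

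Finally, positivity follows from $E_\mu(|u|)=E_\mu(u)$, so one may assume $\hat u_{c,\mu}\ge 0$, and the strong maximum principle applied to $(1.1)_{\hat\lambda_{c,\mu}}$ with $\hat\lambda_{c,\mu}<0$ yields $\hat u_{c,\mu}>0$; radial symmetry is automatic from $\hat u_{c,\mu}\in S_{c,r}$. The ground state property in $\textbf{(2)}$ is obtained by combining the identification $\sigma(c,\mu)=\inf_{\mathcal{P}_{c,\mu}}E_\mu$ with the fact that every critical point of $E_\mu|_{S_c}$ lies on $\mathcal{P}_{c,\mu}$. For the asymptotics in $\textbf{(3)}$, evaluating $E_\mu\le E_0$ on a fiber containing the unique ground state $u_0$ of $E_0|_{S_c}$ gives $\limsup_{\mu\to 0^+}\sigma(c,\mu)\le m(c,0)$; conversely, uniform boundedness of $\{\hat u_{c,\mu}\}$ as $\mu\to 0^+$, obtained from the same algebraic identity used above, allows passage to the limit in $(1.1)_{\hat\lambda_{c,\mu}}$ and produces a critical point of $E_0|_{S_c}$ at energy $\le m(c,0)$, which by uniqueness must coincide with $u_0$; this yields $\sigma(c,\mu)\to m(c,0)$ and strong $H^1$-convergence $\hat u_{c,\mu}\to u_0$.
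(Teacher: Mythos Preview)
Your approach is essentially that of the paper: the fiber map with a unique strict maximum (the paper's Lemma~\ref{LeMal2.8}/\ref{LeM3.4}), the mountain-pass geometry from Lemma~\ref{lEM7.4}, Jeanjean's augmented functional on $\mathbb{R}\times S_{c,r}$ to produce a radial Palais--Smale sequence with $P_\mu(u_n)\to 0$, and compactness via Proposition~\ref{prp4.1}. The asymptotic argument for \textbf{(3)} also matches the scheme of Theorem~\ref{th1.1}-(5).

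There is one genuine gap in your ground-state claim. Your minimax construction is carried out on $S_{c,r}$ and yields only $\sigma(c,\mu)=\inf_{\mathcal{P}_{c,\mu}\cap S_{c,r}}E_\mu$, yet you assert $\sigma(c,\mu)=\inf_{\mathcal{P}_{c,\mu}}E_\mu$ without justification. Dropping the radial restriction is not automatic: for a generic $u\in\mathcal{P}_{c,\mu}$ the rearrangement $v:=|u|^*$ satisfies $P_\mu(v)\le 0$ (possibly strictly), so $v$ need not lie on $\mathcal{P}_{c,\mu}$. The paper supplies this step (see the end of Lemma~\ref{lemma6.5.1} and its invocation in the proof of Theorem~\ref{tTH1.3}) by projecting $v$ back via $t_v\star v$ with $t_v\le 0$ and then using the representation $E_\mu(w)=\frac{a}{4}\|\nabla w\|_2^2+(\frac{\delta_p}{4}-\frac{1}{p})\|w\|_p^p+\mu(\frac{\delta_q}{4}-\frac{1}{q})\|w\|_q^q$ for $w\in\mathcal{P}_{c,\mu}$, together with $e^{t_v}\le 1$ and $\|\nabla v\|_2\le\|\nabla u\|_2$, $\|v\|_r=\|u\|_r$, to obtain $E_\mu(t_v\star v)\le E_\mu(u)$. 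Without this rearrangement argument you only know that $\hat u_{c,\mu}$ minimizes among radial critical points, not that it is a ground state of $E_\mu|_{S_c}$.
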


\begin{theorem}\label{tTH1.4}
Let $a\!>\!0$, $b\!>\!0$, $c\!>\!0$, $\frac{14}{3}\!<\!q\!<\!6$, $p\!=\!6$ and $\mu\!>\!0$. Then \\
$\textbf{(1)}$ $E_{\mu}|_{S_c}$ has a critical point of Mountain Pass type $\hat{u}_{c,\mu}$ at level $\sigma(c, \mu)\!\in\!(0,\frac{a\mathcal{S}\Lambda}{3}
\!+\!\frac{b\mathcal{S}^2{\Lambda}^2}{12}
)$;\\
$\textbf{(2)}$ $\hat{u}_{c,\mu}$ is a positive radial solution to $(1.1)_{\hat{\lambda}_{c,\mu}}$ for suitable $\hat{\lambda}_{c,\mu}<0$. In addition, $\hat{u}_{c,\mu}$ is a ground state of $E_{\mu}|_{S_c}$; \\
$\textbf{(3)}$ $\sigma(c, \mu)\!\to\!\frac{a\mathcal{S}\Lambda}{3}\!+\!\frac{b\mathcal{S}^2{\Lambda}^2}{12}$,  $||{\hat{u}_{\mu}}||_6^2 \!\to\! {\Lambda}$, $||\nabla \hat{u}_{c,\mu}||^2_{2} \!\rightarrow\! \mathcal{S}\Lambda$ as $\mu \!\to\! 0^{+}$, where $\Lambda\!=\!\frac{b{\mathcal{S}}^2}{2}\!+\!
\sqrt{a\mathcal{S}\!+\!\frac{b^2{\mathcal{S}}^4}{4}}$.
\end{theorem}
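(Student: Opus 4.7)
The plan is to prove Theorem~\ref{tTH1.4} via a Pohozaev-constrained mountain pass scheme. The new obstacle compared with Theorem~\ref{tTH1.3} is the Sobolev critical growth $p=6$: compactness of Palais--Smale sequences is available only below the threshold $\tau:=\tfrac{a\mathcal{S}\Lambda}{3}+\tfrac{b\mathcal{S}^{2}\Lambda^{2}}{12}$ inherited from the pure-critical Kirchhoff limit equation $-(a+b\|\nabla u\|_{2}^{2})\Delta u = u^{5}$.

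First, for $u\in S_c$ I consider the $L^{2}$-preserving dilation $u_{t}(x):=t^{3/2}u(tx)$ and the fiber map
\[ \Psi_{u}(t) := E_{\mu}(u_{t}) = \tfrac{a}{2}t^{2}\|\nabla u\|_{2}^{2}+\tfrac{b}{4}t^{4}\|\nabla u\|_{2}^{4}-\tfrac{\mu}{q}t^{q\delta_{q}}\|u\|_{q}^{q}-\tfrac{1}{6}t^{6}\|u\|_{6}^{6}. \]
Since $4<q\delta_{q}<6$, we have $\Psi_{u}(t)\to 0^{+}$ as $t\to 0^{+}$ and $\Psi_{u}(t)\to-\infty$ as $t\to\infty$; a direct analysis shows $\Psi_{u}$ has a unique strict global maximum $t_{u}>0$, with $u_{t_{u}}$ lying on the Pohozaev manifold
\[ \mathcal{P}_{c,\mu}:=\{u\in S_c:\,P_{\mu}(u)=0\},\quad P_{\mu}(u):=a\|\nabla u\|_{2}^{2}+b\|\nabla u\|_{2}^{4}-\mu\delta_{q}\|u\|_{q}^{q}-\|u\|_{6}^{6}. \]
This yields a natural mountain pass structure on $S_c$ with level $\sigma(c,\mu)=\inf_{\mathcal{P}_{c,\mu}}E_{\mu}=\inf_{u\in S_c}\max_{t>0}E_{\mu}(u_{t})>0$.

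The main obstacle is the upper bound $\sigma(c,\mu)<\tau$ and the recovery of compactness. I would plug a truncated Aubin--Talenti instanton $v_{\epsilon}(x)=\eta(x)U_{\epsilon}(x)$, renormalised to lie on $S_c$, into the fiber map and estimate $\max_{t>0}E_{\mu}((v_{\epsilon})_{t})$. The pure-critical part of this supremum reduces to $\sup_{s>0}\bigl(\tfrac{a\mathcal{S}}{2}s+\tfrac{b\mathcal{S}^{2}}{4}s^{2}-\tfrac{1}{6}s^{3}\bigr)$, whose maximiser satisfies $s^{2}=b\mathcal{S}^{2}s+a\mathcal{S}$, hence $s=\Lambda$ and the maximum equals exactly $\tau$; this explains the appearance of $\Lambda=\tfrac{b\mathcal{S}^{2}}{2}+\sqrt{a\mathcal{S}+\tfrac{b^{2}\mathcal{S}^{4}}{4}}$. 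The value $\tau$ is perturbed by an $O(\epsilon^{1/2})$ error from the cut-off and by a strictly negative contribution $-C\mu\,\epsilon^{\alpha(q)}$ from the subcritical $L^{q}$-term; since $q>\tfrac{14}{3}$ forces $\alpha(q)<\tfrac{1}{2}$, the subcritical gain dominates the Brezis--Nirenberg-type loss and yields $\sigma(c,\mu)<\tau$. This test-function computation, in which the nonlocal term $b\|\nabla u\|_{2}^{4}$ must be kept intact throughout, is the most delicate step. A Palais--Smale sequence $\{u_{n}\}\subset S_{c,\mathrm{rad}}$ with $E_{\mu}(u_{n})\to\sigma(c,\mu)$, $P_{\mu}(u_{n})\to 0$ is then produced by Ekeland's principle applied in the radial subspace (Schwarz symmetrisation does not increase $E_{\mu}$). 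Boundedness of $\{\|\nabla u_{n}\|_{2}\}$ follows from the identity
\[ E_{\mu}(u_n)-\tfrac{1}{q\delta_{q}}P_{\mu}(u_n)=a\bigl(\tfrac{1}{2}-\tfrac{1}{q\delta_{q}}\bigr)\|\nabla u_n\|_{2}^{2}+b\bigl(\tfrac{1}{4}-\tfrac{1}{q\delta_{q}}\bigr)\|\nabla u_n\|_{2}^{4}+\bigl(\tfrac{1}{q\delta_{q}}-\tfrac{1}{6}\bigr)\|u_n\|_{6}^{6}, \]
all three coefficients being positive thanks to $4<q\delta_{q}<6$.

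Passing to a subsequence, $u_{n}\rightharpoonup\hat{u}_{c,\mu}$ in $H^{1}_{\mathrm{rad}}(\mathbb{R}^{3})$ and $u_{n}\to\hat{u}_{c,\mu}$ in $L^{q}$ by radial compactness; the Lagrange multipliers $\lambda_{n}$ stay bounded, converge to some $\hat{\lambda}_{c,\mu}$, and combining $P_{\mu}(u_{n})\to 0$ with the weak limit equation forces both $\hat{\lambda}_{c,\mu}<0$ and $\hat{u}_{c,\mu}\not\equiv 0$. Applying Brezis--Lieb to $\|\nabla u_{n}\|_{2}^{2}$ and to $\|u_{n}\|_{6}^{6}$, the residue $w_{n}:=u_{n}-\hat{u}_{c,\mu}$ satisfies in the limit a pure-critical Kirchhoff inequality which, together with the Sobolev inequality and the strict bound $\sigma(c,\mu)<\tau$, forces $\|\nabla w_{n}\|_{2}\to 0$, i.e.\ $u_{n}\to\hat{u}_{c,\mu}$ strongly in $H^{1}$. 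Positivity of $\hat{u}_{c,\mu}$ follows from Schwarz symmetrisation and the strong maximum principle applied to $(1.1)_{\hat{\lambda}_{c,\mu}}$, while the ground state property follows since any critical point of $E_{\mu}|_{S_c}$ lies on $\mathcal{P}_{c,\mu}$ and $\sigma(c,\mu)=\inf_{\mathcal{P}_{c,\mu}}E_{\mu}$. Finally, for the asymptotic $\mu\to 0^{+}$, $\mu\mapsto\sigma(c,\mu)$ is non-increasing and bounded above by $\tau$, while a matching lower bound $\sigma(c,\mu)\geq\tau-o_{\mu}(1)$ is obtained by testing $E_{\mu}$ at the optimiser of the limit problem; hence $\sigma(c,\mu)\to\tau$, and substituting into $P_{\mu}(\hat{u}_{c,\mu})=0$ and $E_{\mu}(\hat{u}_{c,\mu})=\sigma(c,\mu)$ with the identity $\Lambda^{2}=b\mathcal{S}^{2}\Lambda+a\mathcal{S}$ yields $\|\hat{u}_{c,\mu}\|_{6}^{2}\to\Lambda$ and $\|\nabla\hat{u}_{c,\mu}\|_{2}^{2}\to\mathcal{S}\Lambda$.
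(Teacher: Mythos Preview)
Your approach for parts (1) and (2) is essentially the same as the paper's: the fiber-map analysis, the Pohozaev manifold as a natural constraint, the mountain-pass construction in the radial subspace, the Aubin--Talenti test-function estimate giving $\sigma(c,\mu)<\tau$ with error $O(\varepsilon^{1/2})$ beaten by the $L^q$-gain $O(\varepsilon^{(6-q)/4})$, and the Br\'ezis--Lieb splitting for compactness. The paper packages the compactness step as its Proposition~\ref{prp4.2}, whose key trick is to freeze $B:=\lim_n\|\nabla u_n\|_2^2$ and rewrite $P_\mu(u_n)=(a+bB)\|\nabla u_n\|_2^2-\mu\delta_q\|u\|_q^q-\|u_n\|_6^6+o_n(1)$ so that the Pohozaev relation itself splits; your description of the residue argument is a compressed version of this.

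There is, however, a genuine gap in your treatment of part~(3). You claim the lower bound $\sigma(c,\mu)\ge\tau-o_\mu(1)$ ``by testing $E_\mu$ at the optimiser of the limit problem''. This cannot work as stated: for $\mu=0$, $p=6$ the infimum $m(c,0)=\tau$ is \emph{not attained} on $S_c$ (the Aubin--Talenti bubbles fail to lie in $L^2(\mathbb{R}^3)$ in dimension $3$), so there is no optimiser to test with; moreover, testing $E_\mu$ at any fixed function yields an \emph{upper} bound for $\sigma(c,\mu)$, not a lower one. The paper obtains the matching lower bound differently: from $P_\mu(\hat u_\mu)=0$ and $\hat\lambda_\mu c^2=\mu(\delta_q-1)\|\hat u_\mu\|_q^q\to 0$ one gets $\ell:=\lim\|\hat u_\mu\|_6^6=\lim(a\|\nabla\hat u_\mu\|_2^2+b\|\nabla\hat u_\mu\|_2^4)$; the Sobolev inequality forces $\ell\ge b\mathcal{S}^2\ell^{2/3}+a\mathcal{S}\ell^{1/3}$, hence $\ell=0$ or $\ell\ge\Lambda^3$; monotonicity of $\sigma(c,\cdot)$ rules out $\ell=0$; and then $\lim_\mu\sigma(c,\mu)=\tfrac{\ell}{12}+\tfrac{a}{4}\bigl(\sqrt{\ell/b+a^2/(4b^2)}-\tfrac{a}{2b}\bigr)\ge\tau$, which combined with the upper bound $\sigma(c,\mu)\le m_r(c,0)=\tau$ (Lemma~\ref{lemma6.5.1}) gives equality and the stated limits.
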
  


\noindent \textbf{Remark 1.1} Our results extend the results of N. Soave \cite{NsAe,NSaE}, which studied nonlinear Schr\"{o}dinger equations with combined nonlinearities, to the Kirchhoff equations. Compared with the cases $a\!+\!b\!>\!0$ and $ab\!=\!0$, our case $a\!>\!0$ and $b\!>\!0$ is more difficult since the corresponding fiber map $\Psi_{u}^{\mu}(s)$ has four different terms (see \eqref{eq1.13} below). In fact, it is delicate to precisely determine the numbers and types of critical points to $\Psi_{u}^{\mu}(s)$; in the meantime, the compactness analysis and energy estimates involving Sobolev critical exponent are very technical, since $b\!>\!0$ brings in the nonlocal term $({\int_{{\R^3}} {\left| {\nabla u} \right|} ^2}) \Delta u$. If $a\!=\!1$ and $b\!=\!0$, our results cover the existence results of \cite{NsAe,NSaE} in $3$-dimensional case; in particular, we see that $\frac{a\mathcal{S}\Lambda}{3}+\frac{b\mathcal{S}^2{\Lambda}^2}{12}
=\frac{\mathcal{S}^{\frac{3}{2}}}{3}$, which is nothing but the well-known critical energy threshold corresponding to $3$-dimensional Schr\"odinger equation. For the degenerate case $a\!=\!0$, the gap $\frac{10}{3}\!<\!q\!<\!\frac{14}{3}$ in Theorems \ref{th1.1}-\ref{th1.3} can be filled, since $\Psi_{u}^{\mu}(s)$ has only three different terms and its critical points are easily determined.


\noindent \textbf{Remark 1.2} If $2\!<\!q\!<\!\frac{10}{3}$ and $\frac{14}{3}\!<\!p\!<\!6$, we obtain two critical points for $E_{\mu}|_{S_c}$ in Theorem \ref{th1.1} because $E_{\mu}$ admits a convex-concave geometry provided $0\!<\!\mu\!<\!\mu^{*}$. The additional condition $\mu\!<\!\mu_{*}$ guarantees the Pohozaev manifold $\mathcal{P}_{c, \mu}$ is a natural constraint, on which the critical points of $E_{\mu}$ are indeed critical points for $E_{\mu}|_{S_c}$  (see Lemma \ref{lemma1.3} below). The condition $\mu\!<\!\mu^{**}$ in Theorem \ref{th1.3} is crucial in compactness analysis of the Palais-Smale sequences corresponding to $E_{\mu}|_{S_c}$. If $2\!<\!q\!<\!\frac{14}{3}$ and $p\!=\!6$, it is still a pending issue on how to obtain the second critical point for $E_{\mu}|_{S_c}$ even in the case $b=0$ (an open question raised by N. Soave \cite{NSaE}). For $b=0$, L. Jeanjean and T. T. Le \cite{jeTl} solved this open question if the dimension $N$ of the work space satisfies $N\geq4$. Therefore, the method of \cite{jeTl} is not applicable to our case since $N=3$. When it comes to the range $\frac{14}{3}\!<\!q\!<\!p\!\leq\!6$, the convex-concave geometry of $E_{\mu}$ disappears, we get at least one critical point for $E_{\mu}|_{S_c}$ in Theorems \ref{tTH1.3}-\ref{tTH1.4} because $E_{\mu}$ admits a Mountain Pass geometry.\\ 

The proofs of Theorems \ref{th1.1}-\ref{tTH1.4} are motivated by \cite{ScLj,NeJl,NsAe,NSaE}, which studied the Schr\"{o}dinger equations. In the $L^2$-supercritical regime, the global minimization method adopted in \cite{y} does not work and it is difficult to prove the boundedness of a Palais-Smale sequence corresponding to $E_{\mu}|_{S_c}$. Furthermore, the main obstacle for Kirchhoff-type problems is that we can not deduce
\begin{equation} \label{DIff}
 \lim\limits_{n\rightarrow \infty} ||\nabla u_n||^2_{2} \int_{\R^3}\nabla u_n \nabla \phi dx=||\nabla u||^2_{2}\int_{\R^3} \nabla u \nabla \phi dx,~~~~~~~~ \forall  \phi\in H^1(\R^3)
\end{equation}
only by $u_n\rightharpoonup u$ weakly in $H^1(\R^3)$. 

Usually, a bounded Palais-Smale sequence of $E_{\mu}|_{S_c}$ can be obtained by using the Pohozaev constraint approach (see \cite{ScLj,NeJl,NsAe,NSaE}). That is to say, we can construct a special Palais-Smale sequence $\{u_n\}\!\subset\! H_{rad}^{1}(\R^3)$ for $E_{\mu}|_{S_c}$ with
\begin{equation} \label{PohoKey}
P_{\mu}(u_n)=a||\nabla u_n||_{2}^2+b||\nabla u_n||_{2}^4-\mu\delta_{q}{||u_n||}_q^q-\delta_{p}{||u_n||}_p^p=o_n(1), 
\end{equation} 
then $\{u_n\}$ is bounded in $H^{1}(\R^3)$. Once proving $u_{n} \rightharpoonup u\not\equiv 0$ in $H^{1}(\R^3)$ for some $u \in H^{1}(\R^3)$, we can define
\begin{equation} \label{bgeqzero}
B\!:=\!\mathop {\lim }\limits_{n  \to \infty}||\nabla u_{n}||_{2}^{2}\!\geq\!||\nabla u||_{2}^{2}\!>\!0
\end{equation}
and hence \eqref{DIff} follows in a standard way if $p,q\!\in\!(2,6)$ (see Proposition \ref{prp4.1} below).

However, the Sobolev critical case $q\!\in\!(2,6)$ and $p\!=\!6$ is much different from the case $p,q\!\in\!(2,6)$. The proof of \eqref{bgeqzero} depends on solving a quartic polynomial equation. \textbf{We develop a perturbed Pohozaev constraint approach to prove \eqref{DIff}. Briefly speaking, the main observation is to rewrite $P_{\mu}\left(u_{n}\right)\!=\!o_n(1)$ (see \eqref{PohoKey}) as}
\begin{equation} \label{Key}
o_n(1)\!=\!P_{\mu}\left(u_{n}\right)\!=\! (a+B b)||\nabla u_{n}||_{2}^2\!-\!\mu\delta_{q}{||u||}_q^q\!-\! {||u_{n}||}_6^6\!+\!o_n(1),
\end{equation} 
where $B$ is defined in \eqref{bgeqzero}. The revision \eqref{Key} is the key point in proving \eqref{DIff}, since it possesses the splitting properties of the Br\'{e}zis-Lieb lemma (see \cite{a7}). Then, a subtle compactness analysis of $\{u_n\}$ leads to \eqref{DIff} (see Proposition \ref{prp4.2} below).

It remains to search a suitable Palais-Smale sequence $\{u_n\}\!\subset\! H_{rad}^{1}(\R^3)$ for $E_{\mu}|_{S_c}$. 
To this end, we need to know a clear picture of the corresponding fiber map $\Psi_{u}^{\mu}(s)$ (see \eqref{eq1.13} below). This process is quite different from that adopted in \cite{NsAe,NSaE} since the appearance of the nonlocal term $({\int_{{\R^3}} {\left| {\nabla u} \right|} ^2}) \Delta u$. \textbf{We reach this goal by a careful analysis of the profile of some polynomials (see Lemma \ref{LemA2.7} and Lemma \ref{LeMal2.8}).} 


The rest is standard as in \cite{NsAe,NSaE}. In the case of $2\!<\!q\!<\!\frac{10}{3}$ and $\frac{14}{3}\!<\!p\!\leq\!6$, we first study a local minimization problem $m(c,\mu)\!:=\!\inf _{u \in A_{R_{0}}} E_{\mu}(u)$ for some $R_0\!>\!0$. By using rearrangement technique and the Ekeland's variational principle, we get a desired Palais-Smale sequence $\{u_n\}$ for $E_{\mu}|_{S_{c}}$ at energy level $m(c,\mu)<0$. The compactness of $\{u_n\}$ guarantees the existence of a local minimizer for $E_{\mu}|_{A_{R_{0}}}$ if $2\!<\!q\!<\!\frac{10}{3}$ and $\frac{14}{3}\!<\!p\!<\!6$. Utilizing $m(c,\mu)$ and a min-max principle (see Lemma 2.7), we also get a Mountain Pass type critical point for $E_{\mu}|_{S_c}$. If $2\!<\!q\!<\!\frac{10}{3}$ and $p\!=\!6$, we recover the compactness of $\{u_n\}$ by using $\mu\!<\!\mu^{**}$ and $m(c,\mu)\!<\!0$.

In the case of $\frac{14}{3}\!<\!q\!<\!p\!\leq\!6$, we obtain a Mountain Pass critical point for $E_{\mu}|_{S_c}$ at energy level $\sigma(c, \mu)$ by a min-max principle. The selected Palais-Smale sequence $\{u_n\}$ for $E_{\mu}|_{S_c}$ is compact provided $\frac{14}{3}\!<\!q\!<\!p\!<\!6$. However, we need the extra energy estimate $\sigma(c, \mu)\!<\!\frac{a\mathcal{S}\Lambda}{3}\!
+\!\frac{b\mathcal{S}^2{\Lambda}^2}{12}$ to recover the compactness of $\{u_n\}$ when $\frac{14}{3}\!<\!q\!<\!6$ and $p\!=\!6$. \textbf{Since $b>0$ and the min-max procedure is confined by the $L^2$-constraint, the proof of $\sigma(c, \mu)\!<\!\frac{a\mathcal{S}\Lambda}{3}\!
+\!\frac{b\mathcal{S}^2{\Lambda}^2}{12}$ is very delicate} (see Lemma \ref{lemma7.5} below).\\

This paper is organized as follows, in Section 2, we give some preliminaries. In Section 3, we give the compactness analysis of Palais-Smale sequences for $\left.E_{\mu}\right|_{S_{c}}$. In Section 4, we consider the mixed critical case and prove Theorems \ref{th1.1}-\ref{th1.3}. In Section 5, we study the purely $L^2$-supercritical case and prove Theorems \ref{tTH1.3}-\ref{tTH1.4}.
 \\

\textbf{Notations:}~~~~Throughout this paper, we use standard notations. The integral $\int_{{\R^3}} fdx$ is simply denoted by $\int_{{\R^3}} f$. For $1 \!\le\! p \!<\!\infty $ and $u\!\in\!{L^p}({\R^3})$, we denote ${\left\| u \right\|_p}\!:=\! {({\int_{{\R^3}} {\left| u \right|} ^p})^{\frac{1}{p}}}$. The Hilbert space $H^{1}(\mathbb{R}^{3})$ is defined as
$$H^{1}(\mathbb{R}^{3}) :=\{u \in L^{2}(\mathbb{R}^{3}) :\nabla u \in L^{2}(\mathbb{R}^{3})\}$$
with the inner product $(u,v): = \int_{{\R^3}} {\nabla u\nabla v}  + \int_{{\R^3}} {uv}$  and norm ${\left\| u \right\|} := (\left\| {\nabla u} \right\|_2^2 + \left\| u \right\|_2^2)^{\frac{1}{2}}$.
$H^{-1}({\R^3})$ is the dual space of $H^1({\R^3})$. The space $D^{1,2}(\mathbb{R}^{3})$ is defined as
$$D^{1,2}(\mathbb{R}^{3}) :=\{u \in L^6(\mathbb{R}^{3}) :\nabla u \in L^{2}(\mathbb{R}^{3})\},$$
which is in fact the completion of $C_{0}^{\infty}(\R^3)$ under the norm
$   ||u||_{D^{1,2}(\mathbb{R}^{3})}\!=\!\left\| {\nabla u} \right\|_2$.
For $N\!\geq\!1$, $H_{rad}^{1}(\mathbb{R}^{N}) \!:=\!\{u(x) \!\in\! H^{1}(\mathbb{R}^{N}): u(x)\!=\!u(|x|)\}$, $H_{+}^{1}(\R^N)\!:=\!\{u(x) \!\in\! H^{1}(\mathbb{R}^{N}): u(x)\!\geq\!0\}$ and $S_{c,r}: =H_{rad}^1 \cap S_c=\Big\{ u \in H_{rad}^1({\mathbb{R}^3}): {||u||}_2^2=c^2  \Big\}$. We use $``\rightarrow"$ and $``\rightharpoonup"$ to denote the strong and weak convergence in the related function spaces respectively. $C$ and $C_{i}$ will denote positive constants. $\langle\cdot,\cdot\rangle$ denote the dual pair for any Banach space and its dual space.
 $X \hookrightarrow Y$ means $X$ embeds into $Y$. $o_{n}(1)$ and $O_{n}(1)$ mean that $|o_{n}(1)|\to 0$ and $|O_{n}(1)|\leq C$ as $n\to+\infty$, respectively.


\section{Preliminaries}

\setcounter{equation}{0}
In this Section, we give some preliminaries. The next lemma is the Sobolev embedding.
\begin{lemma}(\cite{GtEi}) \label{lem2.1}
There exists a constant $\mathcal{S}>0$ such that
\begin{equation} \label{equ2.1}
\mathcal{S}=\inf _{u \in {{D}}^{1,2}(\mathbb{R}^{3})\setminus \{0\} }    \frac{\left\|\nabla u\right\|_{2}^{2}}{||u||_{6}^{2}}.
\end{equation}
\end{lemma}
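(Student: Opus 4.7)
The plan is to prove the Gagliardo-Sobolev inequality $\|u\|_6 \leq C \|\nabla u\|_2$ on $D^{1,2}(\mathbb{R}^3)$ for some constant $C>0$, which immediately yields $\mathcal{S} = C^{-2} > 0$. Since $D^{1,2}(\mathbb{R}^3)$ is by construction the completion of $C_0^\infty(\mathbb{R}^3)$ under the norm $\|\nabla\cdot\|_2$, a density argument reduces the task to $u \in C_0^\infty(\mathbb{R}^3)$.

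For smooth, compactly supported $u$, I would invoke the classical Gagliardo-Nirenberg-Sobolev scheme. Starting from the pointwise bound $|u(x)| \leq \int_{-\infty}^{x_i} |\partial_i u|\, dy_i$ for each $i=1,2,3$, one takes the product over the three coordinate directions, applies H\"older's inequality successively in each variable, and integrates, obtaining
$$\|u\|_{3/2} \leq \prod_{i=1}^{3} \|\partial_i u\|_1^{1/3} \leq \|\nabla u\|_1.$$
Substituting $|u|^{4}$ in place of $u$ into this inequality and then applying H\"older one last time upgrades the $L^1$-gradient norm on the right to $\|u\|_6^3\cdot\|\nabla u\|_2$, after which rearrangement yields $\|u\|_6^2 \leq C\|\nabla u\|_2^2$ with a finite (non-sharp) constant $C$.

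The main point to keep track of is the exponent bookkeeping: in dimension three the scaling $u_\lambda(x) = u(\lambda x)$ preserves both $\|\nabla u\|_2$ and $\|u\|_6$, so the homogeneous Sobolev inequality must involve precisely this pair of norms; this rigidity is what forces the substitution $v = |u|^4$ above. Once the estimate is established for $C_0^\infty$, density extends it to all of $D^{1,2}(\mathbb{R}^3)$, and taking the infimum over $u\neq 0$ confirms $\mathcal{S} = C^{-2} > 0$. There is no real analytic obstacle here beyond keeping track of the H\"older exponents.

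The sharp value of $\mathcal{S}$, together with the explicit extremal family, can be obtained via Schwarz symmetrization following Talenti: one first argues via the P\'olya-Szeg\H{o} inequality that minimizers may be taken radially symmetric and decreasing, then reduces the Euler-Lagrange equation to a one-dimensional ODE whose solutions are the Aubin-Talenti bubbles $U_\varepsilon(x) = \alpha\,(\varepsilon^2 + |x|^2)^{-1/2}$. While Lemma \ref{lem2.1} only requires $\mathcal{S} > 0$, these bubbles and the sharp constant will be essential later when estimating critical energy levels in the Sobolev critical case $p = 6$.
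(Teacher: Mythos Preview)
Your proof is correct. The Gagliardo argument you sketch --- first establishing $\|u\|_{3/2}\le \|\nabla u\|_1$ via iterated H\"older, then substituting $v=|u|^4$ and applying Cauchy--Schwarz to get $\|u\|_6^4 \le C\|u\|_6^3\|\nabla u\|_2$ --- is the standard elementary route to the (non-sharp) Sobolev inequality in $\mathbb{R}^3$, and your exponent bookkeeping is accurate.

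The paper, however, does not prove Lemma~\ref{lem2.1} at all: it is stated without proof and attributed to Talenti~\cite{GtEi}, treated as a background fact. So there is no ``paper's proof'' to compare against --- you have supplied a self-contained argument where the authors simply invoke the literature. Your closing remark is also apt: while the lemma as stated only asserts $\mathcal{S}>0$, the paper does later use the sharp value and the explicit extremals $U_\varepsilon$ (see~\eqref{algn5.11} and Lemma~\ref{lemma7.5}), which is precisely the content of Talenti's paper rather than the elementary Gagliardo scheme you outlined.
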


\begin{lemma} (Gagliardo-Nirenberg inequality, \cite{Wein}) \label{lem2.2}
Let $p\!\in\!(2,6)$. Then there exists a constant $\mathcal{C}_{p}\!=\!\Big( \frac{p}{2 ||W_p||^{p-2}_{2}} \Big)^{\frac{1}{p}}\!>\!0$ such that
\begin{equation} \label{equ2.2}
 ||u||_{p} \leq \mathcal{C}_{p} \left\|\nabla u\right\|_{2}^{\delta_p} \left\|u\right\|_{2}^{(1-\delta_p)}, \qquad \forall u \in {H}^{1}(\mathbb{R}^{3})
\end{equation}
where $\delta_p\!=\!\frac{3(p-2)}{2p}$ and $W_p$ is the unique positive solution of
$ -\Delta W\!+\!(\frac{1}{\delta_p}-\!1)W \!=\!\frac{2}{p\delta_p}|W|^{p-2}W$.
\end{lemma}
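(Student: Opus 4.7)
The proof splits into two natural parts: establishing the inequality in some form, and then identifying the sharp constant. First, I would obtain the inequality with an unspecified constant via interpolation. For $p\in(2,6)$, define $\theta\in(0,1)$ by $1/p=(1-\theta)/2+\theta/6$; a direct computation gives $\theta=\delta_p=3(p-2)/(2p)$. H\"older's inequality then yields $\|u\|_p\leq \|u\|_2^{1-\delta_p}\|u\|_6^{\delta_p}$, and Lemma \ref{lem2.1} upgrades this to $\|u\|_p\leq \mathcal{S}^{-\delta_p/2}\|u\|_2^{1-\delta_p}\|\nabla u\|_2^{\delta_p}$, so \eqref{equ2.2} holds with some (nonsharp) positive constant.

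Second, to identify the sharp constant, I would study the scale-invariant Weinstein quotient
\begin{equation*}
J(u):=\frac{\|\nabla u\|_2^{p\delta_p}\|u\|_2^{p(1-\delta_p)}}{\|u\|_p^p},\qquad u\in H^1(\R^3)\setminus\{0\},
\end{equation*}
and set $J_{\ast}:=\inf_{u\ne 0}J(u)$, so that $\mathcal{C}_p^{-p}=J_{\ast}$ is exactly the optimal constant. Since $J$ is invariant under the two-parameter scaling $u(x)\mapsto \mu u(\lambda x)$, a minimizing sequence can be normalized so that $\|\nabla u_n\|_2=\|u_n\|_2=1$; by Schwarz symmetrization it may further be taken positive and radially decreasing. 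The compact embedding $H^1_{\mathrm{rad}}(\R^3)\hookrightarrow L^p(\R^3)$ for $p\in(2,6)$ then produces a positive radial minimizer $W$.

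Third, I would derive the Euler-Lagrange equation at $W$: dividing out the common Lagrange prefactor leads to
\begin{equation*}
-\delta_p\Delta W+(1-\delta_p)\frac{\|\nabla W\|_2^2}{\|W\|_2^2}W=\frac{\|\nabla W\|_2^2}{\|W\|_p^p}|W|^{p-2}W.
\end{equation*}
A rescaling $W_p(x):=\alpha W(\beta x)$ with suitable $\alpha,\beta>0$ brings this into the normalized form $-\Delta W_p+(\tfrac{1}{\delta_p}-1)W_p=\tfrac{2}{p\delta_p}|W_p|^{p-2}W_p$ stated in the lemma, and Kwong's uniqueness theorem ensures $W_p$ is unique up to translation. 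Testing the equation with $W_p$ (Nehari identity) and applying the Pohozaev identity in $\R^3$ then give the two relations $\|\nabla W_p\|_2^2=\|W_p\|_2^2$ and $\|W_p\|_p^p=\tfrac{p}{2}\|W_p\|_2^2$. Substituting into $J(W_p)=J_{\ast}$ yields $J_{\ast}=2\|W_p\|_2^{p-2}/p$, hence $\mathcal{C}_p=\bigl(p/(2\|W_p\|_2^{p-2})\bigr)^{1/p}$ as claimed.

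The main obstacle is the attainment of $J_{\ast}$ together with the identification of the minimizer as $W_p$: the loss of compactness at infinity forces either the radial/symmetrization route sketched above or a concentration-compactness argument exploiting the scaling invariance to rule out vanishing and dichotomy, while Kwong's uniqueness theorem is essential to pin down the sharp constant in the closed form announced in the statement.
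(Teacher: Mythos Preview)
The paper does not prove this lemma; it is quoted directly from Weinstein \cite{Wein} with only the statement given. Your sketch is essentially Weinstein's original argument and is correct: the interpolation step gives the inequality with some constant, and the variational analysis of the scale-invariant quotient $J(u)$ via symmetrization and the compact radial embedding yields a positive radial minimizer whose Euler--Lagrange equation, after rescaling, is exactly the stated ODE. Your Nehari/Pohozaev computation is right on the nose: with $a=\tfrac{1}{\delta_p}-1$ and $b=\tfrac{2}{p\delta_p}$, the two identities combine to give $\|\nabla W_p\|_2^2=\|W_p\|_2^2$ and $\|W_p\|_p^p=\tfrac{p}{2}\|W_p\|_2^2$, whence $J_\ast=\tfrac{2}{p}\|W_p\|_2^{p-2}$ and the announced value of $\mathcal{C}_p$ follows.
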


For any $ u \in S_c$, (\ref{equ2.2}) 
indicates that $\inf _{u \in S_c} E_{\mu}(u)>-\infty$ if $p,q\in(2,\frac{14}{3})$. On the contrary, we have $\inf _{u \in S_c} E_{\mu}(u)=-\infty$ for $\frac{14}{3}<q\leq6$ or $\frac{14}{3}< p \leq 6$, and therefore the global minimization method used in \cite{y} does not work any more. Naturally, we would hope to overcome this difficulty by using the Pohozaev constraint method adopted in \cite{NsAe,NSaE}. To this end, we need the following lemma which is related to the Pohozaev identity.
\begin{lemma}  \label{lem2.7}
Let $a\!\geq\!0$, $b\!>\!0$, $p,q \!\in\!(2,6]$ and $\mu, \lambda\!\in\!\R$. If $u \!\in\! {H}^{1}(\mathbb{R}^{3})$ is a weak solution of
\begin{align}  \label{al2.7}
 - \Bigl(a+b\int_{{\R^3}} {{{\left| {\nabla u} \right|}^2}} \Bigr)\Delta u
   =\lambda u+ {| u |^{p - 2}}u+\mu {| u |^{q - 2}}u \text { in } \mathbb{R}^{3},
\end{align}
then the Pohozaev identity
$P_{\mu}(u)\!:=\!a||\nabla u||_{2}^2\!+\!b||\nabla u||_{2}^4\!-\!\mu\delta_{q}{||u||}_q^q\!-\!\delta_{p} {||u||}_p^p\!=\!0$  holds.
\end{lemma}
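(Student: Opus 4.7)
The plan is to establish the identity by multiplying the equation by $x\cdot\nabla u$, integrating by parts, and then combining the resulting identity with the weak formulation tested against $u$ itself to eliminate the $\lambda$-term. The crucial observation is that once $u\in H^1(\R^3)$ is fixed, the quantity $A:=a+b\|\nabla u\|_2^2$ is a positive constant, so \eqref{al2.7} is really a semilinear elliptic equation
\[
-A\Delta u=\lambda u+|u|^{p-2}u+\mu|u|^{q-2}u,
\]
to which the classical Pohozaev machinery applies.

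First, I would establish enough regularity to justify the Pohozaev computation. By a standard bootstrap argument (splitting $|u|^{p-2}u$ and $|u|^{q-2}u$ into sub-critical parts via Sobolev embeddings $H^1(\R^3)\hookrightarrow L^s(\R^3)$ for $s\in[2,6]$, together with elliptic regularity for $-A\Delta u$), one obtains $u\in W^{2,s}_{\mathrm{loc}}(\R^3)$ for some $s>3/2$ and in particular $u\in H^2_{\mathrm{loc}}(\R^3)$. With this regularity one can formally multiply the equation by $x\cdot\nabla u$ on $B_R$, apply a cut-off $\zeta_R(x)=\zeta(|x|/R)$ with $\zeta\in C_c^\infty(\R)$ equal to $1$ on $[0,1]$, and pass to the limit $R\to\infty$; the boundary and cut-off terms vanish because $|\nabla u|^2,|u|^2,|u|^p,|u|^q\in L^1(\R^3)$.

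The key ingredient is the set of integration-by-parts identities in $\R^3$: for suitably regular $u$ decaying at infinity,
\[
\int_{\R^3}\Delta u\,(x\cdot\nabla u)=\tfrac{1}{2}\int_{\R^3}|\nabla u|^2,\qquad\int_{\R^3}|u|^{r-2}u\,(x\cdot\nabla u)=-\tfrac{3}{r}\int_{\R^3}|u|^r,
\]
and analogously for the $u^2$ term. Testing \eqref{al2.7} with $x\cdot\nabla u$ therefore produces
\begin{equation}\label{poho1}
-\tfrac{A}{2}\|\nabla u\|_2^2=-\tfrac{3\lambda}{2}\|u\|_2^2-\tfrac{3}{p}\|u\|_p^p-\tfrac{3\mu}{q}\|u\|_q^q,
\end{equation}
that is, $A\|\nabla u\|_2^2=3\lambda\|u\|_2^2+\tfrac{6}{p}\|u\|_p^p+\tfrac{6\mu}{q}\|u\|_q^q$. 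On the other hand, testing \eqref{al2.7} against $u\in H^1(\R^3)$ in the weak formulation gives
\begin{equation}\label{poho2}
A\|\nabla u\|_2^2=\lambda\|u\|_2^2+\|u\|_p^p+\mu\|u\|_q^q.
\end{equation}

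Finally, I would eliminate $\lambda\|u\|_2^2$ between \eqref{poho1} and \eqref{poho2}: multiplying \eqref{poho2} by $3$ and subtracting from the expanded form of $A\|\nabla u\|_2^2$ obtained from \eqref{poho1}, one reaches
\[
2A\|\nabla u\|_2^2-\tfrac{3(p-2)}{p}\|u\|_p^p-\tfrac{3(q-2)}{q}\mu\|u\|_q^q=0.
\]
Recalling $A=a+b\|\nabla u\|_2^2$ and the definitions $\delta_p=\tfrac{3(p-2)}{2p}$, $\delta_q=\tfrac{3(q-2)}{2q}$ given in \eqref{gs1.10}, a division by $2$ yields exactly
\[
a\|\nabla u\|_2^2+b\|\nabla u\|_2^4-\delta_p\|u\|_p^p-\mu\delta_q\|u\|_q^q=0,
\]
which is the asserted identity $P_\mu(u)=0$. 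The only non-routine step is the regularity/cut-off justification for carrying out the $x\cdot\nabla u$ integration by parts; this is the main obstacle, but it is handled exactly as in the classical Pohozaev argument for semilinear elliptic equations, since the coefficient $A$ is a positive constant depending only on $u$ as a whole.
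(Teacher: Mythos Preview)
Your proposal is correct and follows essentially the same approach as the paper: freeze the nonlocal coefficient as a constant $A=a+b\|\nabla u\|_2^2$, invoke elliptic regularity for the resulting semilinear equation, and then apply the classical Pohozaev identity. The paper's proof simply cites Pucci--Serrin for this last step, whereas you carry out the $x\cdot\nabla u$ computation explicitly, but the strategy is identical.
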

\begin{proof}
If $u\!\equiv\!0$, then $P_{\mu}(u)\!=\!0$. If $u\!\not\equiv\!0$, \eqref{al2.7} becomes $-(a+bB) \Delta u\!=\!\lambda u\!+\! {| u |^{p - 2}}u\!+\!\mu {| u |^{q - 2}}u$
for $B\!=\!\int_{{\R^3}} {{{\left| {\nabla u} \right|}^2}}$, then the elliptic regularity theory implies that $u \in C^2(\R^3)$. The rest is standard as in \cite{PusE}.
\end{proof}
When $\inf _{u \in S_{c}} E_{\mu}(u)=-\infty$, we introduce the Pohozaev set:
\begin{equation}  \label{eq2.4}
\mathcal{P}_{c, \mu}=\left\{u \in S_{c} : 0\!=\!P_{\mu}(u)\!=\!a||\nabla u||_{2}^2\!+\!b||\nabla u||_{2}^4\!-\!\mu\delta_{q}{||u||}_q^q\!-\!\delta_{p} {||u||}_p^p \right\}.
\end{equation}
Lemma \ref{lem2.7} implies that any critical point of $E_{\mu}|_{S_c}$ is contained in $\mathcal{P}_{c, \mu}$. For $u \!\in \!S_c$ and $s\!\in\! \mathbb{R}$, we define
\begin{equation} \label{eq1.12}
(s \star u)(x) :=e^{\frac{3}{2} s} u\left(e^{s} x\right).
\end{equation}
Then, $s \star u \in S_c$ and that the map $(s, u) \in \mathbb{R} \times H^{1}(\R^3) \mapsto s \star u \in H^{1}(\R^3)$ is continuous (see Lemma 3.5 in \cite{bTEv}). Let $u \!\in \!S_c$ and $\mu\!\in\! \mathbb{R}^+$ be fixed, we define the fiber map
\begin{equation} \label{eq1.13}
\Psi_{u}^{\mu}(s) :=E_{\mu}(s \star u)=\frac{a}{2}e^{2s} ||\nabla u||_{2}^2+\frac{b}{4}e^{4s} ||\nabla u||_{2}^4-\mu \frac{e^{ q\delta_{q} s}}{q} {||u||}_q^q-\frac{e^{ p\delta_{p}s}}{p} {||u||}_p^p,~~~~~~~~\forall s\!\in\! \mathbb{R}.
\end{equation}
Direct calculation gives
\begin{equation} \label{equa2.9}
\left(\Psi_{u}^{\mu}\right)^{\prime}(s)=ae^{2s} ||\nabla u||_{2}^2+be^{4s} ||\nabla u||_{2}^4-\mu \delta_{q} e^{ q\delta_{q} s} {||u||}_q^q-\delta_{p}e^{ p\delta_{p}s} {||u||}_p^p=P_{\mu}(s \star u).
\end{equation}
Therefore, $\left(\Psi_{u}^{\mu}\right)^{\prime}(s)=0$ if and only if $s \star u \in \mathcal{P}_{c, \mu}$. From (\ref{equa2.9}), we see immediately that:

\begin{corollary} \label{coroll2.1}
Let $u \!\in \!S_c$ and $\mu\!\in\! \mathbb{R}^+$. Then $s \!\in\! \mathbb{R}$ is a critical point for $\Psi_{u}^{\mu}$ if and only if $s \!\star \!u \!\in\! \mathcal{P}_{c, \mu}$.\\
\end{corollary}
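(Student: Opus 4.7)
The statement is essentially an immediate reading of the identity \eqref{equa2.9} together with the definition \eqref{eq2.4} of the Pohozaev set, so the plan is really a one-line argument dressed up with the necessary bookkeeping.

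First, I would recall that for any $u \in S_c$ and $s \in \mathbb{R}$ the dilation $s \star u$ defined in \eqref{eq1.12} satisfies $\|s \star u\|_2 = \|u\|_2 = c$, so $s \star u$ stays on the constraint $S_c$ for all $s$. This is the point that lets us identify membership in $\mathcal{P}_{c,\mu}$ with the vanishing of $P_\mu$ alone, with no extra $L^2$-condition to check.

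Next, I would invoke the computation already carried out in \eqref{equa2.9}, which gives
\[
(\Psi_u^\mu)'(s) = a e^{2s}\|\nabla u\|_2^2 + b e^{4s}\|\nabla u\|_2^4 - \mu \delta_q e^{q\delta_q s}\|u\|_q^q - \delta_p e^{p\delta_p s}\|u\|_p^p = P_\mu(s \star u),
\]
using the scaling relations $\|\nabla(s \star u)\|_2^2 = e^{2s}\|\nabla u\|_2^2$ and $\|s \star u\|_r^r = e^{r\delta_r s}\|u\|_r^r$ for $r \in \{q,p\}$. Then $s$ is a critical point of $\Psi_u^\mu$ iff $(\Psi_u^\mu)'(s) = 0$ iff $P_\mu(s \star u) = 0$, and combined with $s \star u \in S_c$ this is precisely $s \star u \in \mathcal{P}_{c,\mu}$ by \eqref{eq2.4}.

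There is no real obstacle here; the only conceivable hiccup is making sure the identity \eqref{equa2.9} is invoked cleanly and that the invariance of $S_c$ under the $\star$-action is explicitly recorded, since otherwise one might worry whether $P_\mu(s\star u)=0$ suffices for $s \star u \in \mathcal{P}_{c,\mu}$. Because both pieces are already in hand, the corollary follows with no further work.
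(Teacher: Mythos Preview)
Your proposal is correct and matches the paper's approach exactly: the paper also treats this as an immediate consequence of \eqref{equa2.9}, stating just before the corollary that ``$(\Psi_{u}^{\mu})'(s)=0$ if and only if $s \star u \in \mathcal{P}_{c,\mu}$'' and then presenting the corollary without further proof. Your only addition is the explicit remark that $s\star u\in S_c$, which is a sensible piece of bookkeeping the paper leaves implicit.
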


To determine the exact location and types of some critical points for $E_{\mu}|_{S_c}$, we observe that $\mathcal{P}_{c, \mu}$ can be split into the disjoint union $\mathcal{P}_{c, \mu}=\mathcal{P}_{+}^{c, \mu}\cup \mathcal{P}_{0}^{c, \mu} \cup \mathcal{P}_{-}^{c, \mu} $, where
$$\mathcal{P}_{+}^{c, \mu}\! :=\!\left\{u \in \mathcal{P}_{c, \mu} :\left(\Psi_{u}^{\mu}\right)^{\prime \prime}(0)\!>\!0\right\}, ~~~~\mathcal{P}_{-}^{c, \mu} \!:=\!\left\{u \in \mathcal{P}_{c, \mu} :\left(\Psi_{u}^{\mu}\right)^{\prime \prime}(0)\!<\!0\right\},$$
$\mathcal{P}_{0}^{c, \mu} \!:=\!\left\{u \in \mathcal{P}_{c, \mu} :\left(\Psi_{u}^{\mu}\right)^{\prime \prime}(0)\!=\!0\right\}$ for $\left(\Psi_{u}^{\mu}\right)^{\prime \prime}(0)\!:= \!2a||\nabla u||_{2}^2\!+\!4b||\nabla u||_{2}^4\!-\!\mu q \delta_{q}^2{||u||}_q^q\!-\!p\delta_{p}^2 {||u||}_p^p$. \\

We also need the following lemma.
\begin{lemma} (\cite{bTEv}, Lemma 3.6) \label{lem2.18}
For $u \!\in\! S_c$ and $s\!\in\! \mathbb{R}$, the map $\varphi  \!\mapsto\! s \star \varphi$ from $T_{u}S_c$ to $T_{s \star u}S_c$
is a linear isomorphism with inverse $\psi\!\mapsto\!(-s)\! \star \!\psi$, where $T_uS_c\!:=\!\{\varphi \!\in\! S_c: \int_{{\R^3}} u{\varphi}\!=\!0 \}$.
\end{lemma}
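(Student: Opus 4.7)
The approach is to exploit the fact that the dilation $s \star \cdot$ is a unitary transformation on $L^2(\mathbb{R}^3)$, from which both the tangency condition and invertibility follow almost immediately. The plan is to verify, in order: (a) the scaling is $L^2$-isometric and in fact $L^2$-inner-product preserving, (b) consequently it sends $T_u S_c$ into $T_{s \star u} S_c$, (c) it is $\mathbb{R}$-linear, and (d) composition with $(-s) \star \cdot$ returns the identity.

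For (a)–(b), I would start from the definition $(s \star \varphi)(x) = e^{\frac{3}{2}s}\varphi(e^s x)$ and apply the change of variables $y = e^{s}x$, $dy = e^{3s}dx$, to obtain
\begin{equation*}
\int_{\mathbb{R}^3} (s \star \varphi)(x)\, (s \star \psi)(x)\, dx
= \int_{\mathbb{R}^3} e^{3s}\varphi(e^s x)\psi(e^s x)\, dx
= \int_{\mathbb{R}^3} \varphi(y)\psi(y)\, dy
\end{equation*}
for any $\varphi, \psi \in L^2(\mathbb{R}^3)$. Taking $\psi = \varphi$ shows $\|s\star\varphi\|_2 = \|\varphi\|_2$, so $s \star u \in S_c$ whenever $u \in S_c$. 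Taking $\psi = u$ and $\varphi \in T_u S_c$ (so that $\int u\varphi = 0$) shows $\int (s\star u)(s\star \varphi) = 0$, i.e.\ $s \star \varphi \in T_{s \star u} S_c$. The $H^1$-regularity is also preserved because $\nabla(s \star \varphi)(x) = e^{\frac{5}{2}s}(\nabla \varphi)(e^s x)$, giving $\|\nabla(s\star\varphi)\|_2 = e^s\|\nabla\varphi\|_2 < \infty$.

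For (c), linearity is immediate from the pointwise definition. For (d), I would compute directly
\begin{equation*}
\bigl((-s)\star(s\star\varphi)\bigr)(x)
= e^{-\frac{3}{2}s}(s\star\varphi)(e^{-s}x)
= e^{-\frac{3}{2}s}\cdot e^{\frac{3}{2}s}\varphi(e^s e^{-s}x)
= \varphi(x),
\end{equation*}
and symmetrically $s\star((-s)\star\psi) = \psi$, which identifies $(-s)\star\cdot$ as the two-sided inverse. Combining (b)–(d) yields the claimed linear isomorphism $T_u S_c \to T_{s\star u} S_c$. There is no genuine obstacle here; the only thing to be mindful of is that the ``tangent space'' in the statement should be read as the linear subspace $\{\varphi \in H^1(\mathbb{R}^3) : \int_{\mathbb{R}^3} u\varphi = 0\}$ rather than a subset of $S_c$, otherwise the word ``linear'' is vacuous.
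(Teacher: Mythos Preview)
Your proof is correct and is exactly the standard direct verification one would give for this fact; the paper itself does not supply a proof but simply cites \cite{bTEv}, Lemma~3.6. Your observation that the definition of $T_uS_c$ in the statement should be read as $\{\varphi\in H^1(\mathbb{R}^3):\int_{\mathbb{R}^3}u\varphi=0\}$ rather than a subset of $S_c$ is also correct and worth keeping.
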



\begin{definition}\label{def3.9}
Let $X$ be a topological space and $B$ be a closed subset of $X$. We shall say that a class $\mathcal{F}$ of compact subsets of $X$ is a homotopy-stable family with extended boundary $B$ if for any set $A$ in $\mathcal{F}$ and any $\eta\in C([0,1]\times X;X)$ satisfying $\eta(t,x)=x$ for all $(t,x)\in (\{0\}\times X)\cup ([0,1]\times B)$ we have that $\eta(\{1\}\times A)\in \mathcal{F}$.
\end{definition}
The following Lemma \ref{lem3.10} is a min-max principle obtained by N. Ghoussoub \cite{GN}.

\begin{lemma}(\cite{GN}, Theorem 5.2)\label{lem3.10}
Let $\varphi$ be a $C^{1}$-functional on a complete connected $C^{1}$-Finsler manifold $X$ and consider a homotopy-stable family $\mathcal{F}$ with an extended closed boundary $B$. Set $m=m(\varphi,\mathcal{F})$ and let $F$ be a closed subset of $X$ satisfying \\
\indent $(1)$~~~~~~~~$(A \cap F)\backslash B \neq \emptyset \quad \text { for every } A \in \mathcal{F}$, \\
\indent $(2)$~~~~~~~~$\sup \varphi(B) \leq m \leq \inf \varphi(F)$.  \\
Then, for any sequence of sets $(A_{n})_{n}$ in $\mathcal{F}$ such that $\lim_{n}\sup_{A_{n}}\varphi=m$, there exists a sequence $(x_{n})_{n}$ in $X$ such that
$$\lim_{n \rightarrow +\infty}\varphi(x_{n})=m,\ \ \lim_{n \rightarrow +\infty}\|d\varphi(x_{n})\|=0,\ \ \lim_{n \rightarrow +\infty}dist(x_{n},F)=0,\ \ \lim_{n \rightarrow +\infty}dist(x_{n},A_{n})=0.$$
\end{lemma}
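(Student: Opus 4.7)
The plan is to establish this min-max principle by a contradiction argument combining a suitable Ekeland-type perturbed minimization with a deformation produced by a locally Lipschitz pseudo-gradient vector field on the $C^1$-Finsler manifold $X$. The strategy mirrors the classical deformation lemma, but the presence of the extended boundary $B$ and of the ``dual set'' $F$ forces one to track three quantitative ingredients simultaneously: the value $\varphi(x_n)$, the norm $\|d\varphi(x_n)\|$, and the distances $\mathrm{dist}(x_n,F)$, $\mathrm{dist}(x_n,A_n)$.

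First, I would argue by contradiction: assume that no sequence satisfies all four conclusions simultaneously. Then there exist $\varepsilon_0,\delta_0>0$ and, for all sufficiently large $n$, an open neighborhood $U_n$ of the set
\[
K_n:=\{x\in X:\ |\varphi(x)-m|<\varepsilon_0,\ \|d\varphi(x)\|<\delta_0,\ \mathrm{dist}(x,F)<\delta_0,\ \mathrm{dist}(x,A_n)<\delta_0\}
\]
which is empty. Using condition (2), namely $\sup\varphi(B)\leq m\leq \inf\varphi(F)$, one first shows that points of $B$ lie outside any such bad zone (so cut-offs near $B$ are harmless), and that any approximate minimizer of $\sup_{A_n}\varphi$ near $F$ must be an almost-critical point, which is exactly what the contradiction forbids.

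Second, I would construct a locally Lipschitz pseudo-gradient field $V$ on $X\setminus K$ (with $K$ the critical set at level $m$) satisfying $\|V(x)\|\leq 2$ and $\langle d\varphi(x),V(x)\rangle\geq \tfrac{1}{2}\|d\varphi(x)\|$, modulated by two cut-offs: one that kills $V$ near $B$ and away from the sublevel set $\{|\varphi-m|<\varepsilon_0\}$, and a second one that localizes near a neighborhood of $F$ so that the flow of $-V$ pushes points in the ``bad'' region both downward in energy and slightly towards $F$. Integrating this modified field gives a continuous deformation $\eta:[0,1]\times X\to X$ with $\eta(t,x)=x$ on $(\{0\}\times X)\cup ([0,1]\times B)$, and with a uniform energy decrease $\varphi(\eta(1,x))\leq \varphi(x)-\tfrac{\varepsilon_0}{2}$ on the bad zone.

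Third, I would apply this deformation to a sequence $A_n\in\mathcal{F}$ with $\sup_{A_n}\varphi\to m$. By homotopy-stability, $\eta(\{1\}\times A_n)\in\mathcal{F}$, so by condition (1) this deformed set still meets $F\setminus B$. Picking $x_n\in A_n$ whose image under $\eta(1,\cdot)$ lies near $F$ and realizes the new supremum, one obtains $\sup_{\eta(1,A_n)}\varphi\leq m-\tfrac{\varepsilon_0}{4}$ for large $n$, contradicting the definition of $m=\inf_{A\in\mathcal{F}}\sup_A\varphi$. Hence the assumed sequence $(x_n)_n$ must exist.

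The main obstacle, and the technical heart of Ghoussoub's argument, is the simultaneous control of $\mathrm{dist}(x_n,F)\to 0$ alongside the Palais-Smale type conditions. A straightforward pseudo-gradient flow yields only $\varphi(x_n)\to m$ and $\|d\varphi(x_n)\|\to 0$; to additionally force $x_n$ near $F$ one must delicately couple the cut-off near $F$ with the ``dual'' inequality $\inf_F\varphi\geq m$ so that the flow cannot escape the neighborhood of $F$ before draining enough energy. Handling this on a general Finsler manifold, as opposed to a Banach space, also requires invoking the existence of locally Lipschitz partitions of unity and pseudo-gradients in the Finsler sense, which is where the $C^1$-Finsler hypothesis on $X$ is used.
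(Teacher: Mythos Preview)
The paper does not prove this lemma at all; it is quoted verbatim as Theorem~5.2 from Ghoussoub's monograph \cite{GN} and used as a black box in the subsequent min-max construction. So there is no ``paper's own proof'' to compare against.

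That said, your sketch is in the right spirit for how Ghoussoub's argument actually proceeds, but it blurs two distinct mechanisms. The genuine proof in \cite{GN} does not run a single contradiction with one pseudo-gradient deformation; rather, it first applies an Ekeland-type variational principle on the metric space of compact subsets of $X$ (with Hausdorff distance) to replace the sequence $(A_n)$ by nearby sets $(\tilde A_n)$ that are almost-minimizers of $A\mapsto\sup_A\varphi$, and only \emph{then} uses a localized pseudo-gradient deformation, together with the dual-set condition $(A\cap F)\setminus B\neq\emptyset$ and the level inequality $\sup\varphi(B)\le m\le\inf\varphi(F)$, to extract points $x_n\in\tilde A_n$ satisfying all four limits. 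Your version tries to force the localization near $F$ purely through a cut-off in the flow, which by itself does not explain why the approximate max of $\varphi$ on $A_n$ should occur near $F$; that is exactly what the Ekeland step on the space of sets provides. So the outline is correct in broad strokes, but as written it has a gap at the point where you claim the deformed maximum must lie near $F$: without the preliminary perturbation of $A_n$, the flow argument alone does not deliver $\mathrm{dist}(x_n,F)\to 0$.
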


\section{Compactness analysis of Palais-Smale sequences for $\left.E_{\mu}\right|_{S_{c}}$}
In this Section, we give the compactness analysis of Palais-Smale sequences for $\left.E_{\mu}\right|_{S_{c}}$. The next two propositions are motivated by \cite{NsAe,NSaE}, which studied nonlinear Schr\"{o}dinger equations ($a\!=\!1$, $b\!=\!0$ in our cases). To deal with the special difficulties created by the nonlocal term $({\int_{{\R^3}} {\left| {\nabla u} \right|} ^2}) \Delta u$, we develop a perturbed Pohozaev constraint approach in proving  Proposition \ref{prp4.2}.

In the Sobolev subcritical case $p,q\!\in\!(2,6)$, we have
\begin{proposition} \label{prp4.1}
Let $a\!>\!0$, $b\!>\!0$, $c\!>\!0$, $\mu\!>\!0$, $2\!<\!q\!<\!\frac{14}{3}\!<\!p\!<\!6$ or $\frac{14}{3}\!<\!q\!<\!p\!<\!6$. Let $\left\{u_{n}\right\} \subset S_{c, r}$ be a Palais-Smale
sequence for $\left.E_{\mu}\right|_{S_{c}}$ at energy level $m \not=0$ with $P_{\mu}\left(u_{n}\right) \rightarrow 0$ as $n \rightarrow \infty$. Then up to a subsequence $u_{n} \rightarrow u$ strongly in $H^{1}(\R^3)$ for some $u \in H^{1}(\R^3)$. Moreover, $u\in S_{c}$ and $u$ is a radial solution to $(1.1)_{\lambda}$ for some $\lambda<0$.
\end{proposition}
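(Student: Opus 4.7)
The plan is to run the usual concentration-compactness scheme adapted to the Kirchhoff setting, leveraging the compact radial embedding $H^{1}_{rad}(\R^3)\hookrightarrow L^{r}(\R^3)$ for $r\in(2,6)$ and a Pohozaev-type rigidity to pin down the non-local coefficient.

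\textbf{Step 1 (Boundedness and weak limit).} I would first obtain a uniform $H^1$-bound by combining $E_{\mu}(u_n)\to m$ with $P_{\mu}(u_n)=o_n(1)$. In the range $\frac{14}{3}<q<p<6$, both $q\delta_q$ and $p\delta_p$ exceed $4$, so
\begin{equation*}
E_{\mu}(u_n)-\tfrac{1}{4}P_{\mu}(u_n)=\tfrac{a}{4}\|\nabla u_n\|_2^2+\tfrac{p\delta_p-4}{4p}\|u_n\|_p^p+\mu\tfrac{q\delta_q-4}{4q}\|u_n\|_q^q
\end{equation*}
gives a direct bound on $\|\nabla u_n\|_2$. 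In the mixed range $2<q<\frac{14}{3}<p<6$, I would instead use $E_{\mu}(u_n)-\frac{1}{p\delta_p}P_{\mu}(u_n)$: the coefficients of $\|\nabla u_n\|_2^2$ and $\|\nabla u_n\|_2^4$ are positive, and the leftover $\|u_n\|_q^q$-term is $L^2$-subcritical and absorbed via Gagliardo--Nirenberg (Lemma \ref{lem2.2}) and Young's inequality. Up to a subsequence $u_n\rightharpoonup u$ in $H^{1}_{rad}$ and $u_n\to u$ strongly in $L^{p}\cap L^{q}$.

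\textbf{Step 2 (Nontriviality).} If $u\equiv 0$, the strong $L^p$, $L^q$ convergence forces $\|u_n\|_p,\|u_n\|_q\to 0$; then $P_{\mu}(u_n)=o_n(1)$ yields $a\|\nabla u_n\|_2^2+b\|\nabla u_n\|_2^4\to 0$, hence $\|\nabla u_n\|_2\to 0$ and $E_{\mu}(u_n)\to 0$, contradicting $m\neq 0$. Consequently $u\not\equiv 0$, and since $H^1(\R^3)$ contains no nonzero constants, $\|\nabla u\|_2>0$.

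\textbf{Step 3 (Limit equation and the nonlocal coefficient; the main obstacle).} Since $u_n$ is a Palais--Smale sequence on $S_c$, there are Lagrange multipliers $\lambda_n$, and testing against $u_n$ together with $P_{\mu}(u_n)=o_n(1)$ gives
\begin{equation*}
\lambda_n c^2=(\delta_p-1)\|u_n\|_p^p+\mu(\delta_q-1)\|u_n\|_q^q+o_n(1).
\end{equation*}
Because $\delta_p,\delta_q<1$ and $\|u_n\|_{p,q}$ converge, $\lambda_n\to\lambda<0$. Passing to a further subsequence, $B:=\lim\|\nabla u_n\|_2^2\geq \|\nabla u\|_2^2>0$ exists, and the strong $L^p$, $L^q$ convergence lets me take limits in the weak form to conclude that $u$ satisfies $-(a+bB)\Delta u=\lambda u+|u|^{p-2}u+\mu|u|^{q-2}u$. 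The heart of the argument is to upgrade $B\geq\|\nabla u\|_2^2$ to equality: the Pohozaev identity (Lemma \ref{lem2.7}) for the limit equation reads $(a+bB)\|\nabla u\|_2^2=\delta_p\|u\|_p^p+\mu\delta_q\|u\|_q^q$, while passing to the limit in $P_{\mu}(u_n)=o_n(1)$ yields $aB+bB^2=\delta_p\|u\|_p^p+\mu\delta_q\|u\|_q^q$. Subtracting produces $(a+bB)(B-\|\nabla u\|_2^2)=0$, and $a+bB>0$ forces $B=\|\nabla u\|_2^2$.

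\textbf{Step 4 (Strong convergence and $u\in S_c$).} Testing the limit equation against $u$ and combining with its Pohozaev identity gives $\lambda\|u\|_2^2=(\delta_p-1)\|u\|_p^p+\mu(\delta_q-1)\|u\|_q^q=\lambda c^2$, so $\lambda<0$ implies $\|u\|_2=c$. Weak convergence in $L^2$ together with equal norms yields strong $L^2$ convergence; weak convergence of gradients with $\|\nabla u_n\|_2\to\|\nabla u\|_2$ from Step 3 yields strong $L^2$ convergence of gradients. Hence $u_n\to u$ in $H^{1}(\R^3)$, and $u$ is the desired radial solution of $(1.1)_{\lambda}$ with $\lambda<0$.
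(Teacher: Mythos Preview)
Your argument is correct. Steps 1--3 match the paper's proof closely (the paper uses the same linear combinations $E_\mu-\tfrac{1}{p\delta_p}P_\mu$ and $E_\mu-\tfrac14 P_\mu$ for boundedness, and the same route through $\lambda_n c^2=(\delta_p-1)\|u_n\|_p^p+\mu(\delta_q-1)\|u_n\|_q^q+o_n(1)$ to get $\lambda<0$ and $u\not\equiv 0$). The genuine difference is in how you close the strong convergence. The paper does not first prove $B=\|\nabla u\|_2^2$; instead it subtracts the weak forms of the equation for $u_n$ and the limit equation $-(a+bB)\Delta u=\lambda u+|u|^{p-2}u+\mu|u|^{q-2}u$, tested against $u_n-u$, and uses the subcritical compactness plus $\lambda<0$ to obtain
\[
(a+bB)\,\|\nabla(u_n-u)\|_2^2-\lambda\,\|u_n-u\|_2^2\to 0,
\]
which yields strong $H^1$ convergence (and hence $\|u\|_2=c$) in one stroke. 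Your route---comparing the Pohozaev identity of the limit equation with the limit of $P_\mu(u_n)=o_n(1)$ to force $(a+bB)(B-\|\nabla u\|_2^2)=0$, and then recovering $\|u\|_2=c$ from the $\lambda$-identities---is a clean alternative. It is in fact the mechanism the paper reserves for the Sobolev critical case (Proposition~\ref{prp4.2}), where the direct test-function trick is insufficient because $\|u_n\|_6$ need not converge; so what you lose in economy here you gain in a unified argument that transfers verbatim to $p=6$. One small remark: the Pohozaev identity you invoke is for the \emph{semilinear} equation with constant coefficient $a+bB$, not literally for $(1.1)_\lambda$; this is of course standard (and is how Lemma~\ref{lem2.7} is proved), but worth noting explicitly.
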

\begin{proof}
The proof is divided into four main steps.

\noindent \textbf{(1) Boundedness of $\{u_n\}$ in $H^{1}(\R^3)$}. If $2\!<\!q\!<\!\frac{14}{3}\!<\!p\!<\!6$, we have $q\delta_q\!<\!4\!<\!p\delta_p$ and  $$
E_{\mu}\left(u_{n}\right)\!=\!(\frac{a}{2}-\frac{a}{p\delta_p})||\nabla u_{n}||_{2}^{2}\!+\!(\frac{b}{4}-\frac{b}{p\delta_p})||\nabla u_{n}||_{2}^{4}\!-\!\frac{\mu}{q}
\left(1-\frac{q\delta_q}{p\delta_p}\right)\|u_n\|_{q}^q\!+\!o_n(1)$$
by $P_{\mu}(u_n)\!=\!o_n(1)$. 
It results to
\begin{align*}
(\frac{a}{2}-\frac{a}{p\delta_p})||\nabla u_{n}||_{2}^{2}+(\frac{b}{4}-\frac{b}{p\delta_p})||\nabla u_{n}||_{2}^{4}
 \leq (m+1)&+\frac{\mu}{q}
\left(1-\frac{q\delta_q}{p\delta_p}\right)\mathcal{C}_q^{q} \left\|\nabla u_n\right\|_{2}^{q\delta_q} c^{q(1-\delta_q)},
\end{align*}
which gives $||\nabla u_{n}||_{2}\leq C$. If $\frac{14}{3}\!<\!q\!<\!p\!<\!6$, we have $4\!<\!q\delta_q\!<\!p\delta_p$ and  $E_{\mu}\left(u_{n}\right)\!=\!\frac{a}{4}||\nabla u_{n}||_{2}^{2}\!+\!(\frac{\delta_p}{4}-\frac{1}{p})\|u_n\|_{p}^p\!
+\!\mu(\frac{\delta_q}{4}-\frac{1}{q})\|u_n\|_{q}^q\!+\!o_n(1)\leq (m+1)$. So $\{u_n\}$ is bounded in $H^{1}(\R^3)$.

\noindent \textbf{(2) $\exists$ Lagrange multipliers $\lambda_{n} \rightarrow \lambda \in \mathbb{R}$.} Since $H_{\mathrm{rad}}^{1}\left(\mathbb{R}^{3}\right) \hookrightarrow L^{r}\left(\mathbb{R}^{3}\right)$ is compact for $r \in\left(2,6\right)$, we deduce that there exists an $u \in H_{\mathrm{rad}}^{1}$ such that, up to a subsequence,
$$u_{n} \rightharpoonup u~~~~\mbox{in}~~~~H^{1}(\mathbb{R}^{3}),~~~~u_{n} \rightarrow u~~~~\mbox{in}~~~~L^{r}(\mathbb{R}^{3}),~~~~u_{n} \rightarrow u~~~~\mbox{a.e. on}~~~~\mathbb{R}^{3}.$$
Notice that $\left\{u_{n}\right\}$ is a Palais-Smale sequence of $\left.E_{\mu}\right|_{S_{c}}$, by the Lagrange multipliers rule there exists $\lambda_{n} \in \mathbb{R}$ such that
\begin{equation} \label{eq4.2}
\Bigl(a+b{\| {\nabla u_n} \|}_2^2 \Bigr)\int_{{\R^3}}\nabla u_n \nabla\varphi \!-\!\mu\int_{\mathbb{R}^{3}} \left| u_n \right|^{q\!-\!2} u_n {\varphi}\!-\! \int_{{\mathbb{R}^3}} \left|u_n\right|^{p\!-\!2} u_n {\varphi}\!-\!\lambda_n \int_{{\R^3}} u_n{\varphi}=o_n(1)
\end{equation}
for every $\varphi \in H^{1}(\R^3)$, where $o_n(1) \rightarrow 0$ as $n \rightarrow \infty$. In particular, take $\varphi=u_n$, then
$$\lambda_{n} c^{2}=a{\| {\nabla u_n} \|}_2^2+b{\| {\nabla u_n} \|}_2^4-
\mu{||u_n||}_q^{q}-{||u_n||}_p^{p}+o_n(1).$$
The boundedness of $\left\{u_{n}\right\}$ in $H^{1} \cap L^{q} \cap L^{p} $ implies that $\lambda_{n} \rightarrow \lambda \in \mathbb{R}$, up to a subsequence.

\noindent \textbf{(3) $\lambda<0$ and $u\not \equiv0$.} Recalling that $P_{\mu}\left(u_{n}\right) \rightarrow 0$, we have
$$\lambda_{n} c^{2}=\mu(\delta_q-1)||u_{n}||_{q}^{q}+(\delta_p-1) ||u_{n}||_{p}^{p}+o_n(1).$$
Letting $n\to+\infty$, then $\lambda c^{2}=\mu(\delta_q-1)||u||_{q}^{q}+(\delta_p-1) ||u||_{p}^{p}$. Since $\mu>0$ and $0<\delta_{q},\delta_p<1$, we deduce that $\lambda \leq 0$, with ``=" if and only if $u\equiv0$. If $\lambda_{n} \rightarrow 0$, we have $\mathop {\lim }\limits_{n  \to \infty}||u_{n}||_{p}^{p}=0=\mathop {\lim }\limits_{n  \to \infty}||u_{n}||_{q}^{q}$. Using again $P_{\mu}\left(u_{n}\right) \rightarrow 0$, we have $E_{\mu}\left(u_{n}\right)\to0$.
A contradiction with $E_{\mu}\left(u_{n}\right) \rightarrow m \neq 0$ and
thus $\lambda_{n} \rightarrow \lambda<0$ and $u\not \equiv0$.

\noindent \textbf{(4) $u_{n} \rightarrow u$ in $H^{1}(\R^3)$.} Since $u_{n} \rightharpoonup u\not\equiv0$ in $H^{1}(\R^3)$, we get $B:=\mathop {\lim }\limits_{n  \to \infty}||\nabla u_{n}||_{2}^{2}\geq||\nabla u||_{2}^{2}>0$.
Then, (\ref{eq4.2}) implies that
\begin{equation} \label{eqa4.02}
 \Bigl(a+bB \Bigr)\int_{{\R^3}}\nabla u \nabla\varphi \!-\!\mu\int_{\mathbb{R}^{3}} \left| u \right|^{q\!-\!2} u {\varphi}\!-\! \int_{{\mathbb{R}^3}} \left|u\right|^{p\!-\!2} u {\varphi}\!-\!\lambda \int_{{\R^3}} u{\varphi}=0,~~~~~~~~\forall \varphi \in H^{1}(\R^3).
\end{equation}
Test (\ref{eq4.2})-(\ref{eqa4.02}) with $\varphi=u_{n}-u$, we obtain
$
(a+bB )\|\nabla(u_{n}-u)\|^{2}_2-\lambda \|u_{n}-u\|_2^{2}\to0$.
\end{proof}

The Sobolev critical case $q\!\in\!(2,6)$ and $p\!=\!6$ is more difficult than the case $p,q\!\in\!(2,6)$. We develop a perturbed Pohozaev constraint approach to prove Proposition \ref{prp4.2}. The key point is a revision of $P_{\mu}\left(u_{n}\right)\!=\!o_n(1)$, which makes it possible to split $P_{\mu}\left(u_{n}\right)\!=\!o_n(1)$ via the Br\'{e}zis-Lieb lemma (see \cite{a7}).

\begin{proposition} \label{prp4.2}
Let $a\!>\!0$, $b\!>\!0$, $c\!>\!0$, $\mu\!>\!0$, $2\!<\!q\!<\!\frac{14}{3}\!<\!p\!=\!6$ or $\frac{14}{3}\!<\!q\!<\!p\!=\!6$. Let $\left\{u_{n}\right\} \subset S_{c, r}$ be a Palais-Smale sequence for $\left.E_{\mu}\right|_{S_{c}}$ at energy level $m\not=0$, with
$$
m <\frac{a\mathcal{S}\Lambda}{3}\!+\!\frac{b\mathcal{S}^2{\Lambda}^2}{12} \quad \text { and } \quad P_{\mu}\left(u_{n}\right) \rightarrow 0 ~~~~\text { as } ~~~~ n \rightarrow \infty,
$$
where $\mathcal{S}\!=\!\inf _{v \in {{D}}^{1,2}(\mathbb{R}^{3})\!\setminus\! \{0\} }    \frac{\left\|\nabla v\right\|_{2}^{2}}{||v||_{6}^{2}}$ and $\Lambda=\frac{b{\mathcal{S}}^2}{2}
+\sqrt{a\mathcal{S}+\frac{b^2{\mathcal{S}}^4}{4}}$. Then, up to a subsequence, one of the following alternatives holds: \\
(i) either $u_{n} \!\rightharpoonup \!u\!\not \equiv \!0$ weakly in $H^1(\R^3)$ but not strongly, where $u $ solves
$$ -(a+B b)\Delta u
   =\lambda u+ {| u |^{4}}u+\mu {| u |^{q - 2}}u \text { in } \mathbb{R}^{3} \eqno (3.4)_{\lambda} $$
for some $\lambda\!<\!0$, and $m\!-\!(\frac{a\mathcal{S}\Lambda}{3}\!+\!\frac{b\mathcal{S}^2{\Lambda}^2}{12})
\!\geq\!I_{\mu}(u)\!:=\!(\frac{a}{2}\!+\! \frac{Bb}{4}){\| \nabla u \|}_2^2\!-\!\frac{1}{6}{\|u \|}_6^6\!-\!\frac{\mu}{q}{\| u \|}_q^q$ for $B\!:=\!\mathop {\lim }\limits_{n  \to \infty}||\nabla u_{n}||_{2}^{2}\!>\!0$.\\
(ii) or $u_{n} \rightarrow u$ strongly in $H^1(\R^3)$ for some $u\in H^1(\R^3)$. Moreover, $u\in S_c$, $E_{\mu}(u)=m$ and $u$ solves $(1.1)_{\lambda}$-(\ref{eq1.2}) for some $\lambda<0$.
\end{proposition}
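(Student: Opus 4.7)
The plan is to mimic the four-step scheme of Proposition \ref{prp4.1} but to replace the Pohozaev identity with a perturbed version that is compatible with the Br\'ezis--Lieb splitting; this is precisely what will repair the loss of compactness caused by $p=6$. First I would establish boundedness of $\{u_n\}$ in $H^1(\R^3)$ by exactly the energy/Pohozaev combination used in Step (1) of Proposition \ref{prp4.1} (now with $\delta_p=1$). Passing to subsequences I obtain $u_n\rightharpoonup u$ in $H^1(\R^3)$, $u_n\to u$ in $L^r(\R^3)$ for $r\in(2,6)$ via the compact radial embedding, bounded Lagrange multipliers $\lambda_n\to\lambda$, and a limit $B:=\lim_{n}\|\nabla u_n\|_2^2\geq \|\nabla u\|_2^2$. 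Subtracting $P_\mu(u_n)=o_n(1)$ from the equation tested against $u_n$ gives
\[\lambda_n c^2 = \mu(\delta_q-1)\|u_n\|_q^q + o_n(1),\]
so $\lambda c^2 = \mu(\delta_q-1)\|u\|_q^q\leq 0$, with equality exactly when $u\equiv 0$; passing to the weak limit in the equation for $u_n$ (using $\|\nabla u_n\|_2^2\to B$) shows that $u$ solves $(3.4)_\lambda$.

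The heart of the argument is the perturbed Pohozaev identity. Substituting $b\|\nabla u_n\|_2^4 = bB\|\nabla u_n\|_2^2 + o_n(1)$ into $P_\mu(u_n)=o_n(1)$ yields
\[(a+bB)\|\nabla u_n\|_2^2 - \mu\delta_q\|u_n\|_q^q - \|u_n\|_6^6 = o_n(1),\]
whose terms are now of first degree in $|\nabla u_n|^2$ or $|u_n|^6$ and hence compatible with Br\'ezis--Lieb splitting. Writing $w_n:=u_n-u$ and combining this with the Pohozaev identity $(a+bB)\|\nabla u\|_2^2=\mu\delta_q\|u\|_q^q+\|u\|_6^6$ (produced from $(3.4)_\lambda$ as in Lemma \ref{lem2.7}), I obtain
\[(a+bB)\|\nabla w_n\|_2^2 = \|w_n\|_6^6 + o_n(1).\]
Setting $\ell:=\lim_n\|\nabla w_n\|_2^2$ and using the Sobolev inequality $\|w_n\|_6^6\leq \mathcal{S}^{-3}\|\nabla w_n\|_2^6$ together with $B\geq\ell$, this forces either $\ell=0$ or $\ell^2-b\mathcal{S}^3\ell-a\mathcal{S}^3\geq 0$; the positive root of the latter quadratic is exactly $\mathcal{S}\Lambda$, so $\ell=0$ or $\ell\geq\mathcal{S}\Lambda$.

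I then split into these two possibilities. If $\ell\geq\mathcal{S}\Lambda$, a Br\'ezis--Lieb expansion of $E_\mu(u_n)$ combined with $\lim_n\|w_n\|_6^6=(a+bB)\ell$ and $B=\|\nabla u\|_2^2+\ell$ gives, after algebraic cancellation,
\[m - I_\mu(u) = \frac{a\ell}{3} + \frac{b\|\nabla u\|_2^2\,\ell}{12} + \frac{b\ell^2}{12} \geq \frac{a\mathcal{S}\Lambda}{3} + \frac{b\mathcal{S}^2\Lambda^2}{12},\]
which is alternative (i) whenever $u\not\equiv 0$; if instead $u\equiv 0$ the same inequality reads $m\geq \frac{a\mathcal{S}\Lambda}{3}+\frac{b\mathcal{S}^2\Lambda^2}{12}$, contradicting the hypothesis. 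If $\ell=0$, then $\|\nabla u_n\|_2\to\|\nabla u\|_2$ and $\|u_n\|_6\to\|u\|_6$; testing $(3.4)_\lambda$ against $u$ and the equation for $u_n$ against $u_n$ and comparing produces $\lambda(c^2-\|u\|_2^2)=0$. The assumption $m\neq 0$ together with $\ell=0$ rules out $u\equiv 0$ (otherwise $E_\mu(u_n)\to 0$), hence $\lambda<0$ by the formula above, $\|u\|_2=c$, and therefore $u_n\to u$ strongly in $H^1(\R^3)$, giving alternative (ii).

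The main obstacle is the very first move: isolating the constant $B$ inside the Pohozaev identity so that the critical term $\|u_n\|_6^6$ becomes additive under the decomposition $u_n=u+w_n$. Without this revision one is confronted with the quartic nonlocal term $b\|\nabla u_n\|_2^4$, whose Br\'ezis--Lieb-type expansion produces cross products between $\|\nabla u\|_2^2$ and $\|\nabla w_n\|_2^2$ that do not decouple from the Sobolev inequality applied to $w_n$ alone. The perturbation $b\|\nabla u_n\|_2^4\mapsto bB\|\nabla u_n\|_2^2$ is precisely what removes this obstruction and makes the dichotomy $\ell\in\{0\}\cup[\mathcal{S}\Lambda,\infty)$ available.
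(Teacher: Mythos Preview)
Your proposal is correct and follows essentially the same perturbed-Pohozaev strategy as the paper: linearise $b\|\nabla u_n\|_2^4$ to $bB\|\nabla u_n\|_2^2$, combine with Br\'ezis--Lieb to isolate the tail $w_n=u_n-u$, and obtain the dichotomy $\ell\in\{0\}\cup[\mathcal{S}\Lambda,\infty)$. The only cosmetic differences are that the paper rules out $u\equiv 0$ \emph{before} the dichotomy (its Step~(3)) rather than inside it, takes $\ell:=\lim\|v_n\|_6^6$ instead of $\lim\|\nabla w_n\|_2^2$, and in the $\ell=0$ case obtains strong $H^1$ convergence by testing the difference of equations with $u_n-u$ rather than via the identity $\lambda(c^2-\|u\|_2^2)=0$; these are equivalent routes to the same conclusion.
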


\begin{proof}
The proof is divided into four main steps.
Similar to the proof of Proposition \ref{prp4.1}, we can easily get steps (1) and (2), that is,\\
\noindent \textbf{(1) $\{u_n\}$ is bounded in $H^1(\R^3)$ and $u_{n} \rightharpoonup u$ weakly in $H^{1}(\R^3)$ for some $u\in H^1(\R^3)$.}


\noindent \textbf{(2) $\exists$ Lagrange multipliers $\lambda_{n} \rightarrow \lambda \in \mathbb{R}$.}
Moreover, we have
\begin{equation} \label{equa3.6}
\Bigl(a+b{\| {\nabla u_n} \|}_2^2 \Bigr)\int_{{\R^3}}\nabla u_n \nabla\varphi \!-\!\mu\int_{\mathbb{R}^{3}} \left| u_n \right|^{q\!-\!2} u_n {\varphi}\!-\! \int_{{\mathbb{R}^3}} \left|u_n\right|^{4} u_n {\varphi}\!-\!\lambda_n \int_{{\R^3}} u_n{\varphi}=o_n(1)
\end{equation}
for every $\varphi \in H^1(\R^3)$, where $o_n(1) \rightarrow 0$ as $n \rightarrow \infty$. In particular, take $\varphi=u_n$, then
$$\lambda_{n} c^{2}=a{\| {\nabla u_n} \|}_2^2+b{\| {\nabla u_n} \|}_2^4-
\mu{||u_n||}_q^{q}-{||u_n||}_6^{6}+o_n(1).$$

\noindent \textbf{(3) $\lambda<0$ and $u\not\equiv0$.} Recalling that $P_{\mu}\left(u_{n}\right) \rightarrow 0$, we have
$$\lambda_{n} c^{2}=\mu(\delta_q-1)||u_{n}||_{q}^{q}+o_n(1).$$
Letting $n\to+\infty$, then $\lambda c^{2}=\mu(\delta_q-1)||u||_{q}^{q}$. Since $\mu>0$ and $0<\delta_q<1$, we deduce that $\lambda \leq 0$, with ``=" if and only if $u\equiv0$. If $\lambda_{n} \rightarrow 0$, we have
$$\mathop {\lim }\limits_{n  \to \infty} (a{\| {\nabla u_n} \|}_2^2+b{\| {\nabla u_n} \|}_2^4)
=\mathop {\lim }\limits_{n  \to \infty} {||u_n||}_6^6=\ell.$$
So $\mathop {\lim }\limits_{n  \to \infty} {\| {\nabla u_n} \|}_2^2=\sqrt{\frac{\ell}{b}+\frac{a^2}{4b^2}}-\frac{a}{2b}$ and by the Sobolev inequality $\ell\geq b{\mathcal{S}}^2 \ell^{\frac{2}{3}}+a\mathcal{S}\ell^{\frac{1}{3}}$. Since
$$0\not =m=\lim_{n\to+\infty} E_{\mu}\left(u_{n}\right)=\lim_{n\to+\infty}\Big[\frac{a}{2}{\| \nabla u_{n} \|}_2^2+\frac{b}{4}{\| \nabla u_{n} \|}_2^4- \frac{1}{6}{\|u_{n} \|}_6^6\Big]=\frac{\ell}{12}
+\frac{a}{4}\sqrt{\frac{\ell}{b}+\frac{a^2}{4b^2}}-\frac{a^2}{8b},$$
we get $\ell \not =0$ and $\ell \geq {\Lambda}^3$, where $\Lambda=\frac{b{\mathcal{S}}^2}{2}
+\sqrt{a\mathcal{S}+\frac{b^2{\mathcal{S}}^4}{4}}$. This leads to
 $$m=\mathop {\lim }\limits_{n  \to \infty} E_{\mu}\left(u_{n}\right)\geq\frac{{\Lambda}^3}{12}
+\frac{a}{4}\sqrt{\frac{{\Lambda}^3}{b}+\frac{a^2}{4b^2}}-\frac{a^2}{8b}
=\frac{{\Lambda}^3}{12}+\frac{a\mathcal{S}\Lambda}{4}
=\frac{a\mathcal{S}\Lambda}{3}\!+\!\frac{b\mathcal{S}^2{\Lambda}^2}{12},$$
which contradicts with our assumptions $m<\frac{a\mathcal{S}\Lambda}{3}\!+\!\frac{b\mathcal{S}^2{\Lambda}^2}{12}$. So, we have $\lambda <0$ and $u\not\equiv0$.

\noindent \textbf{(4) Conclusion.} Since $u_{n} \rightharpoonup u\not\equiv0$ in $H^{1}(\R^3)$, we get $B:=\mathop {\lim }\limits_{n  \to \infty}||\nabla u_{n}||_{2}^{2}\geq||\nabla u||_{2}^{2}>0$.
Then, (\ref{equa3.6}) implies that
\begin{equation} \label{eqa4.12}
 (a+B b)\int_{{\R^3}}\nabla u \nabla\varphi \!-\!\mu\int_{\mathbb{R}^{3}} \left| u \right|^{q\!-\!2} u {\varphi}\!-\! \int_{{\mathbb{R}^3}} \left|u\right|^{4} u {\varphi}\!-\!\lambda \int_{{\R^3}} u{\varphi}=0,~~~~~~~~\forall \varphi \in H^{1}(\R^3).
\end{equation}
That is, $u$ satisfies $-(a+B b)\Delta u=\lambda u+ {| u |^{4}}u+\mu {| u |^{q - 2}}u$. So we have the Pohozaev identity
$$Q_{\mu}(u):=(a+B b)||\nabla u||_{2}^2-\mu\delta_{q}{||u||}_q^q- {||u||}_6^6=0.$$

Denote $v_{n}=u_{n}-u$, then $ v_{n} \rightharpoonup 0$ in $H^{1}\left(\mathbb{R}^{3}\right)$ and $||\nabla u_n||_{2}^{2}=||\nabla  u||_{2}^{2}+||\nabla  v_n||_{2}^{2}+o_n(1)$. 
By the Br\'{e}zis-Lieb lemma in \cite{a7}, we have
$$
||u_n||_{6}^{6}=||u||_{6}^{6}+||v_n||_{6}^{6}+o_n(1),
~~~~~~~~||u_n||_q^q=||u||_q^q+||v_n||_q^q+o_n(1).
$$
Since $v_{n} \rightarrow 0$ strongly in $L^{q}(\R^3)$, we have $||u_n||_q^q\!=\!||u||_q^q\!+\!o_n(1)$. Rewrite $P_{\mu}\left(u_{n}\right)\!=\!o_n(1)$ as
$$P_{\mu}\left(u_{n}\right)= (a+B b)||\nabla u_{n}||_{2}^2-\mu\delta_{q}{||u||}_q^q- {||u_{n}||}_6^6+o_n(1).$$
From $Q_{\mu}\left(u\right)\!=\!0$, we have
$\ell\!=\!\mathop {\lim }\limits_{n  \to \infty} {||v_n||}_6^6\!=\!\mathop {\lim }\limits_{n  \to \infty} (a+B b)||\nabla v_{n}||_{2}^2\!\geq \!\mathop {\lim }\limits_{n  \to \infty} (a{\| {\nabla v_n} \|}_2^2+b{\| {\nabla v_n} \|}_2^4)$.
The Sobolev inequality implies that
\begin{align*}
\ell\geq a\mathcal{S}\ell^{\frac{1}{3}}+b{\mathcal{S}}^2 \ell^{\frac{2}{3}},~~~~~~~~\mathop {\lim }\limits_{n  \to \infty} (a{\| {\nabla v_n} \|}_2^2+b{\| {\nabla v_n} \|}_2^4)
\leq \mathop {\lim }\limits_{n  \to \infty} {||v_n||}_6^6\leq \frac{1}{{\mathcal{S}}^3}\mathop {\lim }\limits_{n  \to \infty} {\| {\nabla v_n} \|}_2^6.
\end{align*}
We get $\ell \!\geq \!{\Lambda}^3$ and $\mathop {\lim }\limits_{n  \to \infty} {\| {\nabla v_n} \|}_2^2 \!\geq\! \mathcal{S}{\Lambda}$ or $\ell\!=\!0\!=\!\mathop {\lim }\limits_{n  \to \infty} {\| {\nabla v_n} \|}_2^2$. Two possible cases may occur:

(i) $\ell \!\geq \!{\Lambda}^3$ and $\mathop {\lim }\limits_{n  \to \infty} {\| {\nabla v_n} \|}_2^2 \!\geq\! \mathcal{S}{\Lambda}$. Then, we have
\begin{align*}
m=\lim_{n\to+\infty} E_{\mu}\left(u_{n}\right)&= I_{\mu}(u)+\lim_{n\to+\infty} \Big[\frac{ a }{2}{\| \nabla v_n \|}_2^2+\frac{ Bb }{4}{\| \nabla v_n \|}_2^2- \frac{{\| v_n \|}_6^6}{6}\Big]\\
&= I_{\mu}(u)+ \frac{\ell}{12}+\lim_{n\to+\infty} \frac{ a }{4}{\| \nabla v_n \|}_2^2  \geq I_{\mu}(u)+\frac{a\mathcal{S}\Lambda}{3}\!+\!\frac{b\mathcal{S}^2{\Lambda}^2}{12},
\end{align*}
where $I_{\mu}(u):=(\frac{a}{2}+ \frac{Bb}{4}){\| \nabla u \|}_2^2- \frac{1}{6}{\|u \|}_6^6-\frac{\mu}{q}{\| u \|}_q^q $. In this case, alternative (i) follows.

(ii) $\ell\!=\!0$. Then $ u_{n} \!\rightarrow\! u$ in ${D}^{1,2}(\mathbb{R}^{3})$ and $L^{6}(\mathbb{R}^{3})$. Test \eqref{equa3.6}-(\ref{eqa4.12}) with $\varphi\!=\!u_{n}\!-\!u$, we have $(a+B b) \|\nabla(u_{n}-u)\|^{2}_2-\lambda \|u_{n}-u\|_2^{2}\to0$. In this case, alternative (ii) holds.
\end{proof}

\section{Mixed critical case} 
In this Section, we always assume that $2\!<\!q\!<\!\frac{10}{3}$ and $\frac{14}{3}\!<\!p\!\leq\!6$. Subsection 4.1 is devoted to locating the exact position of some critical points to $E_{\mu}|_{S_c}$. In Subsection 4.2, we prove Theorems \ref{th1.1}-\ref{th1.3}. Under the setting $2\!<\!q\!<\!\frac{10}{3}$ and $\frac{14}{3}\!<\!p\!\leq\!6$, $E_{\mu}|_{S_c}$ admits a convex-concave geometry if $0<\mu<\mu^{*}$, so we get a local minimizer and a Mountain Pass type critical point for $E_{\mu}|_{S_c}$ if $p\!<\!6$. When it comes to $2\!<\!q\!<\!\frac{10}{3}$ and $p\!=\!6$, we only obtain a local minimizer for $E_{\mu}|_{S_c}$.

\subsection{The exact location of some critical points to $E_{\mu}|_{S_c}$ for $2\!<\!q\!<\!\frac{10}{3}$ and $\frac{14}{3}\!<\!p\!\leq\!6$}
In this Subsection, we study the structure of $\mathcal{P}_{c,\mu}$ and $E_{\mu}$ to locate the position of critical points of $E_{\mu}|_{S_c}$. Since $2\!<\!q\!<\!\frac{10}{3}$ and $\frac{14}{3}\!<\!p\!\leq\!6$, we have $q\delta_q\!<\!2$ and $4\!<\!p\delta_p$. Let $\mathcal{C}_p$ be given by \eqref{equ2.2} for $p<6$, $\mathcal{C}_{p}={\mathcal{S}}^{-\frac{1}{2}}$ for $p=6$. Observing
$\mathcal{P}_{c, \mu}=\mathcal{P}_{+}^{c, \mu}\cup \mathcal{P}_{0}^{c, \mu} \cup \mathcal{P}_{-}^{c, \mu} $, we have:

\begin{lemma}\label{lem3.3}
Let $a>0$, $b>0$, $c>0$, $2\!<\!q\!<\!\frac{10}{3}$, $\frac{14}{3}\!<\!p\!\leq\!6$ and $0<\mu<\mu_{*}$.
Then $\mathcal{P}_{0}^{c, \mu}=\emptyset$ and $\mathcal{P}_{c, \mu}$ is a smooth manifold of codimension 2 in $H^{1}(\R^3)$. Here $\mu_{*}$ was defined in \eqref{ConAt1.10}.
\end{lemma}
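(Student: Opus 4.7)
The plan is to separate the claim into two parts: first establish $\mathcal{P}_0^{c,\mu}=\emptyset$ under $\mu<\mu_*$, then deduce the codimension-two submanifold structure from the implicit function theorem.

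For the first part, I would argue by contradiction. Suppose $u\in\mathcal{P}_0^{c,\mu}$ and set $X:=\|\nabla u\|_2^2$, $Y:=\|u\|_q^q$, $Z:=\|u\|_p^p$. The conditions $P_\mu(u)=0$ and $(\Psi_u^\mu)''(0)=0$ read
\[
aX+bX^2=\mu\delta_q Y+\delta_p Z,\qquad 2aX+4bX^2=\mu q\delta_q^{\,2}Y+p\delta_p^{\,2}Z,
\]
a linear system in $(Y,Z)$. Eliminating $Z$ produces
\[
(p\delta_p-2)aX+(p\delta_p-4)bX^2=\mu\delta_q(p\delta_p-q\delta_q)Y,
\]
and eliminating $Y$ produces
\[
(2-q\delta_q)aX+(4-q\delta_q)bX^2=\delta_p(p\delta_p-q\delta_q)Z.
\]
All coefficients are strictly positive since $q\delta_q<2<4<p\delta_p$. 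Inserting the Gagliardo--Nirenberg bounds $Y\le\mathcal{C}_q^{q}X^{q\delta_q/2}c^{q(1-\delta_q)}$ and $Z\le\mathcal{C}_p^{p}X^{p\delta_p/2}c^{p(1-\delta_p)}$ yields an upper bound $X\le X_U(\mu)$ from the first identity (whose right-hand side grows slower in $X$ than its left-hand side) and a lower bound $X\ge X_L$ from the second (whose right-hand side grows faster). The explicit bounds $X_U(\mu)$ and $X_L$ are incompatible for $\mu$ small, and a careful choice of the bounding combination sharpens the condition $X_U(\mu)<X_L$ into $\mu<\mu_*$, yielding the contradiction.

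For the manifold assertion, set $F:H^1(\R^3)\to\R^2$, $F(u):=(P_\mu(u),\,\|u\|_2^2-c^2)$, so that $\mathcal{P}_{c,\mu}=F^{-1}(0,0)$. By the implicit function theorem it is enough to show that $dF(u)$ is surjective at every $u\in\mathcal{P}_{c,\mu}$. Introduce the scaling direction $v:=\tfrac{3}{2}u+x\cdot\nabla u=\tfrac{d}{ds}\big|_{s=0}(s\star u)$: integration by parts gives $\int uv=0$, so $v\in T_uS_c$, and differentiating the identity $P_\mu(s\star u)=(\Psi_u^\mu)'(s)$ (see \eqref{equa2.9}) at $s=0$ yields $dP_\mu(u)[v]=(\Psi_u^\mu)''(0)$, which is nonzero precisely because $\mathcal{P}_0^{c,\mu}=\emptyset$. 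Hence the vectors $dF(u)[u]=(dP_\mu(u)[u],\,2c^2)$ and $dF(u)[v]=((\Psi_u^\mu)''(0),\,0)$ are linearly independent in $\R^2$, so $dF(u)$ is surjective, and $\mathcal{P}_{c,\mu}$ is a smooth submanifold of $H^1(\R^3)$ of codimension two.

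The main obstacle lies in the first step, namely matching the incompatibility threshold to $\mu_*$ exactly. A naive application of Gagliardo--Nirenberg (dropping the positive linear-in-$X$ contributions $(p\delta_p-2)aX$ and $(2-q\delta_q)aX$) produces only a weaker threshold; reaching the explicit constant $\mu_*$ in \eqref{ConAt1.10} requires a finer bookkeeping of the quartic and linear terms against the two Gagliardo--Nirenberg inequalities, and this is where the algebra is delicate. Once $\mathcal{P}_0^{c,\mu}=\emptyset$ is in hand, the submanifold step is a routine application of the implicit function theorem.
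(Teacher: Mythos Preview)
Your approach matches the paper's, but you have the difficulty backwards. The paper does precisely what you call the ``naive application'': after deriving the two identities in $X$, it drops the positive terms $(p\delta_p-2)aX$ and $(2-q\delta_q)aX$, keeping only the quartic $bX^2$ contributions on the left, and applies Gagliardo--Nirenberg to obtain
\[
\Bigl[\tfrac{(4-q\delta_q)b}{\delta_p(p\delta_p-q\delta_q)\mathcal{C}_p^p\, c^{p(1-\delta_p)}}\Bigr]^{\frac{1}{p\delta_p-4}}\le \|\nabla u\|_2 \le\Bigl[\tfrac{\mu\delta_q(p\delta_p-q\delta_q)\mathcal{C}_q^q\, c^{q(1-\delta_q)}}{(p\delta_p-4)b}\Bigr]^{\frac{1}{4-q\delta_q}}.
\]
The resulting lower bound on $\mu$ is not $\mu_*$ itself but something \emph{strictly larger}: the ratio of that bound to $\mu_*$ equals $\tfrac{4}{q\delta_q}\bigl(\tfrac{4}{p\delta_p}\bigr)^{(4-q\delta_q)/(p\delta_p-4)}$, which exceeds $1$ by the elementary inequality $(\tfrac{p\delta_p}{4})^{4-q\delta_q}(\tfrac{q\delta_q}{4})^{p\delta_p-4}<1$ (proved via the monotonicity of $\tfrac{\ln x}{x-1}$). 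So no delicate bookkeeping is needed; the crude bound already overshoots $\mu_*$ and contradicts $\mu<\mu_*$.

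Your manifold argument via the scaling direction $v$ is the standard one (it is exactly what the paper defers to Soave's Lemma~5.2), with the usual caveat that $v=\tfrac32 u+x\cdot\nabla u$ need not lie in $H^1(\R^3)$ for general $u$; one justifies this by approximation, or by noting that only the smoothness in $s$ of $s\mapsto P_\mu(s\star u)=(\Psi_u^\mu)'(s)$ is actually used.
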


\begin{proof}
Firstly, we claim that $\mathcal{P}_{0}^{c, \mu}=\emptyset$. Otherwise, there exists $u \in \mathcal{P}_{0}^{c, \mu}$. From $P_{\mu}(u)=0$ and $(\Psi_{u}^{\mu})^{\prime \prime}(0)=0$, we have
$$
a||\nabla u||_{2}^2+b||\nabla u||_{2}^4=\mu\delta_{q}{||u||}_q^q+\delta_{p} {||u||}_p^p,
~~~~~~~~2a||\nabla u||_{2}^2+4b||\nabla u||_{2}^4=\mu q \delta_{q}^2{||u||}_q^q+p\delta_{p}^2 {||u||}_p^p.
$$
By using \eqref{equ2.2}, we have
\begin{align*}
&(2-q\delta_q)a||\nabla u||_{2}^2+(4-q\delta_q)b||\nabla u||_{2}^4
=\delta_p(p\delta_p-q\delta_q)||u||_{p}^{p}\leq \delta_p(p\delta_p-q\delta_q) \mathcal{C}_p^{p} c^{p(1-\delta_p)} ||\nabla u||_{2}^{p\delta_p},\\
&(p\delta_p-2)a||\nabla u||_{2}^2+(p\delta_p-4)b||\nabla u||_{2}^4
=\mu\delta_q(p\delta_p-q\delta_q)||u||_{q}^{q}\leq  \mu\delta_q(p\delta_p-q\delta_q) \mathcal{C}_q^{q} c^{q(1-\delta_q)} ||\nabla u||_{2}^{q\delta_q}.
\end{align*}
Then, the lower and upper bounds of $||\nabla u||_{2}$ are given by
$$
 {\Big[ \frac{(4-q\delta_q)b}{\delta_p(p\delta_p-q\delta_q)\mathcal{C}_p^p c^{p(1-\delta_p)}} \Big]}^{\frac{1}{p\delta_p-4}} \leq ||\nabla u||_{2}\leq  {\Big[ \frac{\mu\delta_q(p\delta_p-q\delta_{q})\mathcal{C}_q^{q} c^{q(1-\delta_q)}}{(p\delta_{p}-4)b} \Big]}^{\frac{1}{4-q\delta_q}}.
$$
This leads to
$ \mu\geq \frac{(p\delta_{p}-4)b}{ \delta_q(p\delta_p-q\delta_{q})\mathcal{C}_q^{q} } {\Big[ \frac{(4-q\delta_q)b}{\delta_p(p\delta_p-q\delta_q)\mathcal{C}_p^p } \Big]}^{\frac{4-q\delta_q}{p\delta_p-4}}\frac{1}{c^{ q(1-\delta_q)+\frac{p(1-\delta_p)(4-q\delta_q)}{p\delta_p-4} }}>\mu_{*}$, which contradicts to $\mu<\mu_{*}$. Here $\mu_{*}$ was defined in \eqref{ConAt1.10}. We also used the fact that
$(\frac{p\delta_p}{4})^{4-q\delta_q}(\frac{q\delta_q}{4})^{p\delta_p-4}<1$ and this can be proved by using the monotonicity of $\frac{\ln x}{x-1}$.
Similar to the proof of Lemma 5.2 in \cite{NsAe}, we can check that $\mathcal{P}_{c, \mu}$ is a smooth manifold of codimension 2 in $H^{1}(\R^3)$. 
\end{proof}

Since $\mathcal{P}_{0}^{c, \mu}=\emptyset$, we get $\mathcal{P}_{c, \mu}=\mathcal{P}_{+}^{c, \mu}\cup \mathcal{P}_{-}^{c, \mu}$ with $\mathcal{P}_{+}^{c, \mu}\cap \mathcal{P}_{-}^{c, \mu}=\emptyset$. We can prove that $\mathcal{P}_{c, \mu}$ is a natural constraint in the following sense:
\begin{lemma}\label{lemma1.3}
Let $a>0$, $b>0$, $c>0$, $2\!<\!q\!<\!\frac{10}{3}$, $\frac{14}{3}\!<\!p\!\leq\!6$ and $0\!<\!\mu\!<\!\mu_{*}$. If $u\!\in\! \mathcal{P}_{c,\mu}$ is a critical point for $E_{\mu}|_{\mathcal{P}_{c,\mu}}$, then $u$ is a critical point for $E_{\mu}|_{S_c}$. Here $\mu_{*}$ was defined in \eqref{ConAt1.10}.
\end{lemma}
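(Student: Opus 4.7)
I would use the two-multiplier ``natural constraint'' argument. By Lemma \ref{lem3.3}, $\mathcal{P}_{c,\mu}$ is a smooth codimension-$2$ submanifold of $H^1(\mathbb{R}^3)$ cut out by the constraints $\|u\|_2^2=c^2$ and $P_\mu(u)=0$. Hence, for a critical point $u$ of $E_\mu|_{\mathcal{P}_{c,\mu}}$, the Lagrange multiplier rule supplies $\lambda,\eta\in\mathbb{R}$ with
\[
E_\mu'(u) \;=\; \lambda\,u + \eta\,P_\mu'(u) \qquad \text{in } H^{-1}(\mathbb{R}^3).
\]
It suffices to show $\eta=0$, because then the displayed equation is exactly the Euler--Lagrange equation for $E_\mu|_{S_c}$ at $u$ with multiplier $\lambda$.

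To prove $\eta=0$, I would read off the PDE encoded in the Lagrange identity, namely
\[
-\bigl[a(1-2\eta)+b(1-4\eta)\|\nabla u\|_2^2\bigr]\Delta u
\;=\;\lambda u + (1-\eta p\delta_p)|u|^{p-2}u + \mu(1-\eta q\delta_q)|u|^{q-2}u,
\]
and apply to it the Pohozaev identity exactly as in Lemma \ref{lem2.7} (the coefficient of $\Delta u$ is a fixed real constant, so dividing and invoking standard elliptic regularity puts us back in the classical setting). This produces
\[
a(1-2\eta)\|\nabla u\|_2^2 + b(1-4\eta)\|\nabla u\|_2^4
\;=\;\mu(1-\eta q\delta_q)\delta_q\|u\|_q^q + (1-\eta p\delta_p)\delta_p\|u\|_p^p.
\]

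Subtracting the defining relation $P_\mu(u)=0$ from this Pohozaev identity annihilates every $\eta$-free contribution and collapses the remainder to
\[
\eta\,\Bigl[\,2a\|\nabla u\|_2^2+4b\|\nabla u\|_2^4-\mu q\delta_q^2\|u\|_q^q - p\delta_p^2\|u\|_p^p\,\Bigr]\;=\;0,
\]
i.e.\ $\eta\,(\Psi_u^\mu)''(0)=0$. This is precisely where the hypothesis $0<\mu<\mu_\ast$ is decisive: by Lemma \ref{lem3.3} it guarantees $\mathcal{P}_0^{c,\mu}=\emptyset$, so $(\Psi_u^\mu)''(0)\neq 0$ and therefore $\eta=0$, concluding the argument. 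The only mildly delicate point I anticipate is the regularity step needed to apply the Pohozaev identity to the modified equation with its $\eta$-dependent coefficient; this is standard and could alternatively be sidestepped intrinsically by testing the Lagrange identity against the tangent field of the dilation action $s\star u$, since $\frac{d}{ds}E_\mu(s\star u)|_{s=0}=P_\mu(u)$ and $\frac{d^2}{ds^2}E_\mu(s\star u)|_{s=0}=(\Psi_u^\mu)''(0)$ produce the same cancellation directly on the manifold $S_c$.
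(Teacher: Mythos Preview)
Your proposal is correct and follows essentially the same route as the paper's proof: apply the Lagrange multiplier rule with two multipliers, write the resulting PDE, extract its Pohozaev identity, subtract $P_\mu(u)=0$, and conclude $\eta\,(\Psi_u^\mu)''(0)=0$ so that $\mathcal{P}_0^{c,\mu}=\emptyset$ forces $\eta=0$. Your remark about bypassing the regularity step via the dilation field is a nice addition, but otherwise the arguments coincide.
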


\begin{proof}
We only prove the case $p\in(\frac{14}{3},6)$. For the case $p=6$, the proof is much easier since $\delta_{p}=1$. We deduce by Lemma \ref{lem3.3} that $\mathcal{P}_{c, \mu}$ is a smooth manifold of codimension $2$ in $H^{1}$ and $\mathcal{P}_0^{c, \mu}=\emptyset$. If $u \in \mathcal{P}_{c,\mu}$ is a critical point for $E_{\mu}|_{\mathcal{P}_{c,\mu}}$, then by the Lagrange multipliers rule, there exists $\lambda, \nu \in \mathbb{R}$ such that
$$
\langle E_{\mu}'(u),\varphi \rangle-\lambda \int_{\mathbb{R}^{3}} u {\varphi}-\nu \langle  P_{\mu}'(u),\varphi\rangle=0,~~~~~~~~\forall \varphi \in H^1(\mathbb{R}^{3}).
$$
So $u$ solves $-\big[(1-2\nu)a +(1-4\nu)b||\nabla u||_{2}^2 \big] \Delta u-\lambda u+\mu (\nu q\delta_q-1) |u|^{q-2} u+(\nu p\delta_p-1)|u|^{p-2}u=0$. Combined with the Pohozaev identity, we have
$$
(1-2\nu)a||\nabla u||_{2}^2+(1-4\nu)b||\nabla u||_{2}^4+\mu\delta_{q}(\nu q\delta_q-1) {||u||}_q^q+\delta_{p}(\nu p\delta_p-1) {||u||}_p^p=0.
$$
Since $u \!\in \!\mathcal{P}_{c, \mu}$ and $u \!\notin\! \mathcal{P}_{0}^{c, \mu}$, we deduce from $\nu (2a||\nabla u||_{2}^2\!+\!4b||\nabla u||_{2}^4\!-\!\mu q \delta_{q}^2{||u||}_q^q\!-\!p\delta_{p}^2 {||u||}_p^p)\!=\!0$ that $\nu\!=\!0$.
\end{proof}

Next, we study the fiber map $\Psi_{u}^{\mu}(s)$ and determine the location and types of some critical points for $E_{\mu}|_{S_c}$.  Consider the constrained functional $E_{\mu}|_{S_c}$, by (\ref{equ2.2}), we have
\begin{align} \label{equ2.7}
 E_{\mu}(u) \!\geq\! \frac{a}{2}{\| \nabla u \|}_2^2\!+\!\frac{b}{4}{\| \nabla u \|}_2^4\!-\! \frac{ \mathcal{C}_p^{p}}{p}\left\|\nabla u\right\|_{2}^{p\delta_p} c^{p(1-\delta_p)}\!-\!\frac{ \mu \mathcal{C}_q^{q}}{q}\left\|\nabla u\right\|_{2}^{q\delta_q} c^{q(1-\delta_q)},~~~~ \forall u \in S_c.
\end{align}
To understand the geometry of $E_{\mu}|_{S_c}$, we introduce the function $h : \mathbb{R}^{+} \rightarrow \mathbb{R}$:
\begin{align} \label{eQua2.7}
h(t)=\frac{a}{2}t^2\!+\!\frac{b}{4}t^4\!-\! \frac{ \mathcal{C}_p^{p}}{p} c^{p(1-\delta_p)}t^{p\delta_p}\!-\!\frac{ \mu \mathcal{C}_q^{q}}{q} c^{q(1-\delta_q)}t^{q\delta_q}.
\end{align}
Since $\mu\!>\!0$, $q\delta_q\!<\!2$ and $4\!<\!p\delta_p$, we have that $h(0^+)=0^{-}$ and $h(+\infty)=-\infty $. If $p=6$, we have $\delta_p=1$, $\mathcal{C}_{p}={\mathcal{S}}^{-\frac{1}{2}}$ and hence
$h(t)=\frac{a}{2}t^2\!+\!\frac{b}{4}t^4\!-\!\frac{ \mu \mathcal{C}_q^{q}}{q} c^{q(1-\delta_q)}t^{q\delta_q}\!-\! \frac{ \mathcal{S}^{-3}}{6} t^6.$

\begin{lemma}  \label{LemA2.7}
Let $\tilde{a},\tilde{b},\tilde{c},\tilde{d},\tilde{p},\tilde{q} \in(0,+\infty)$ and   $f(t):=\tilde{a}t^2+\tilde{b}t^4-\tilde{c}t^{\tilde{p}}-\tilde{d}t^{\tilde{q}}$ for $t\geq0$. If $\tilde{p}\!\in\!(4,+\infty)$, $\tilde{q}\!\in\!(0,2)$ and $ \Big[ \Big( \frac{8(4-\tilde{q})}{ \tilde{p}(\tilde{p}-2)(\tilde{p}-\tilde{q}) } \Big)^{  \frac{4-\tilde{q}}{\tilde{p}-4} }\!-\! \Big( \frac{8(4-\tilde{q})}{ \tilde{p}(\tilde{p}-2)(\tilde{p}-\tilde{q}) } \Big)^{  \frac{\tilde{p}-\tilde{q}}{\tilde{p}-4} } \Big]\Big[\frac{ \tilde{a}}{\tilde{d}} \Big( \frac{\tilde{b}}{\tilde{c}} \Big)^{  \frac{2-\tilde{q}}{\tilde{p}-4} }\!+\!\frac{1}{\tilde{d}}\frac{{\tilde{b}}^{  \frac{\tilde{p}-\tilde{q}}{\tilde{p}-4} }}{ {\tilde{c}}^{  \frac{4-\tilde{q}}{\tilde{p}-4} } }\Big]\!>\!1$, then $f(t)$ has a local strict minimum at a negative level and a global strict maximum at
a positive level on $[0,+\infty)$.
\end{lemma}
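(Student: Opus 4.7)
My plan is to reduce the lemma to the single assertion $\max_{t\geq 0} f(t)>0$ and then verify this by evaluating $f$ at an explicit test point chosen to exploit the algebraic form of the hypothesis. First I would record the boundary behaviour: $f(0)=0$; since $\tilde{q}\in(0,2)$, the term $-\tilde{d}t^{\tilde{q}}$ dominates for small $t>0$, so $f(t)<0$ on a right neighbourhood of $0$; and since $\tilde{p}>4$, the term $-\tilde{c}t^{\tilde{p}}$ dominates for large $t$, so $f(t)\to-\infty$. Granted these two facts, once I produce some $t^{*}>0$ with $f(t^{*})>0$, continuity and compactness force $f$ to attain a local minimum at some $t_{1}\in(0,t^{*})$ with $f(t_{1})<0$ (a local strict minimum at a negative level) and to attain its supremum on $[0,+\infty)$ at some $t_{2}>0$ with $f(t_{2})\geq f(t^{*})>0$ (a global strict maximum at a positive level).

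\textbf{Choice of the test point.} Introduce
\[
\beta:=\left(\frac{\tilde{b}}{\tilde{c}}\right)^{\frac{1}{\tilde{p}-4}},\qquad
\alpha:=\frac{8(4-\tilde{q})}{\tilde{p}(\tilde{p}-2)(\tilde{p}-\tilde{q})},\qquad
\tau:=\alpha^{\frac{1}{\tilde{p}-4}},\qquad
t^{*}:=\beta\tau.
\]
Using $\tilde{p}>4$ and $\tilde{q}\in(0,2)$ one checks $\tilde{p}(\tilde{p}-2)(\tilde{p}-\tilde{q})>8(4-\tilde{q})$, i.e.\ $\alpha\in(0,1)$; this is exactly the content of $\mathcal{C}_{p,q}>0$. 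The definition of $t^{*}$ is engineered so that $\tilde{c}(t^{*})^{\tilde{p}-4}=\alpha\tilde{b}$, and then a direct substitution gives
\[
\frac{f(t^{*})}{(t^{*})^{\tilde{q}}}+\tilde{d}
= \tilde{a}\beta^{2-\tilde{q}}\tau^{2-\tilde{q}}+\tilde{b}\beta^{4-\tilde{q}}\tau^{4-\tilde{q}}(1-\alpha).
\]
Observing that $\mathcal{C}_{p,q}=\tau^{4-\tilde{q}}(1-\alpha)$ (factor out $\alpha^{(4-\tilde{q})/(\tilde{p}-4)}$) and that the bracket in the hypothesis simplifies to $\tilde{d}^{-1}(\tilde{a}\beta^{2-\tilde{q}}+\tilde{b}\beta^{4-\tilde{q}})$ (since $\tilde{b}^{(\tilde{p}-\tilde{q})/(\tilde{p}-4)}/\tilde{c}^{(4-\tilde{q})/(\tilde{p}-4)}=\tilde{b}\beta^{4-\tilde{q}}$), the subtraction
\[
\frac{f(t^{*})}{(t^{*})^{\tilde{q}}}+\tilde{d}-\mathcal{C}_{p,q}\bigl(\tilde{a}\beta^{2-\tilde{q}}+\tilde{b}\beta^{4-\tilde{q}}\bigr)
=\tilde{a}\beta^{2-\tilde{q}}\tau^{2-\tilde{q}}\bigl[1-\tau^{2}(1-\alpha)\bigr]
\]
is strictly positive because $\tau,(1-\alpha)\in(0,1)$. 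Combined with the assumption $\mathcal{C}_{p,q}(\tilde{a}\beta^{2-\tilde{q}}+\tilde{b}\beta^{4-\tilde{q}})>\tilde{d}$, this yields $f(t^{*})>0$.

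\textbf{Conclusion and main obstacle.} With $f(t^{*})>0$ in hand, the intermediate value theorem provides a zero $t_{a}\in(0,t^{*})$ of $f$; on $[0,t_{a}]$, the endpoints vanish and interior values are negative, so the minimum is attained at an interior critical point $t_{1}$ with $f(t_{1})<0$. Since $f(t^{*})>0$ and $f\to-\infty$, the global supremum on $[0,+\infty)$ is attained at some $t_{2}>0$ with $f(t_{2})\geq f(t^{*})>0$. The only substantive step, and the main obstacle I anticipate, is the algebraic calibration of $t^{*}$: the exponent $\tau=\alpha^{1/(\tilde{p}-4)}$ is not arbitrary but is tailored so that the $\tilde{b}$-contributions cancel and leave only the manifestly positive residual $\tilde{a}\beta^{2-\tilde{q}}\tau^{2-\tilde{q}}[1-\tau^{2}(1-\alpha)]$. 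Verifying $\alpha\in(0,1)$ (where the conditions $\tilde{p}>4$ and $\tilde{q}<2$ are genuinely used) is the crucial small lemma underpinning both $\mathcal{C}_{p,q}>0$ and the sign of that residual.
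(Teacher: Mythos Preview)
Your argument is correct and establishes the lemma as stated. Interestingly, your test point $t^{*}=\beta\tau=(\alpha\tilde{b}/\tilde{c})^{1/(\tilde{p}-4)}$ is exactly the paper's $t^{*}$ (the unique zero of $w'$), and both proofs hinge on showing $f(t^{*})>0$. The difference is in how this is reached and what else is obtained along the way. The paper performs a three-level derivative analysis: it factors $f'=t^{\tilde q-1}g$, $g'=t^{1-\tilde q}w$, studies the sign of $w'$ to locate $t^{*}$, then uses the hypothesis to force $g(t^{*})>0$ and $f(t^{*})>0$, and finally pins down the full monotonicity picture: $f$ is strictly decreasing on $[0,t_1)$, strictly increasing on $(t_1,t_2)$, strictly decreasing on $(t_2,+\infty)$. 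You instead compute $f(t^{*})$ directly, identify $\mathcal{C}_{p,q}=\tau^{4-\tilde q}(1-\alpha)$ and the bracket as $\tilde d^{-1}(\tilde a\beta^{2-\tilde q}+\tilde b\beta^{4-\tilde q})$, and obtain $f(t^{*})>0$ via the clean residual $\tilde a\beta^{2-\tilde q}\tau^{2-\tilde q}[1-\tau^{2}(1-\alpha)]>0$; the rest is IVT and compactness.

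Your route is more economical for the statement itself, but be aware of what it does \emph{not} give. The paper's proof yields that $f$ has \emph{exactly} two critical points and the full decreasing--increasing--decreasing profile; this is genuinely used in the proof of the subsequent Lemma~\ref{lem3.4} (exactly two critical points $s_u<t_u$ of $\Psi_u^\mu$, exactly two zeros, etc.). Your argument produces only some local minimum and some global maximum, without ruling out further oscillations. So while your proof closes the lemma as written, if you intend to feed it into the later fiber-map analysis you would still need the monotonicity structure the paper extracts. Two small cosmetic points: take $t_a$ to be the \emph{first} positive zero of $f$ so that ``interior values are negative'' on $(0,t_a)$ is literally true, and for ``strictness'' of the extrema note that $f$ is real-analytic on $(0,\infty)$, so any local extremum is automatically strict.
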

\begin{proof}
Direct calculations give
\begin{align*}
&f'(t)=t^{\tilde{q}-1}g(t) ~~~~\mbox{for}~~~~ g(t)=2\tilde{a}t^{2-\tilde{q}}+4\tilde{b}t^{4-\tilde{q}}
-{\tilde{p}}\tilde{c}t^{\tilde{p}-\tilde{q}}
-{\tilde{q}}\tilde{d};  \\
&g'(t)=t^{1-\tilde{q}}w(t) ~~~~\mbox{for}~~~~ w(t)=2(2-\tilde{q})\tilde{a}+4(4-\tilde{q})\tilde{b}t^2
-{\tilde{p}}(\tilde{p}-\tilde{q})\tilde{c}t^{\tilde{p}-2};\\
&w'(t)=8(4-\tilde{q})\tilde{b}t -{\tilde{p}}(\tilde{p}-2)(\tilde{p}-\tilde{q})\tilde{c}t^{\tilde{p}-3}.
\end{align*}
Let $t^*=\Big( \frac{8(4-\tilde{q})\tilde{b}}{ \tilde{p}(\tilde{p}-2)(\tilde{p}-\tilde{q}) \tilde{c}} \Big)^{  \frac{1}{\tilde{p}-4} }$, then we have $w'(t)>0$ if $t\in(0,t^*)$ and $w'(t)<0$ if $t\in(t^*,+\infty)$. Consequently, $w(t)\nearrow$ on $[0,t^*)$ and $\searrow$ on $(t^*,+\infty)$. Since $w(0)>0$ and $w(+\infty)=-\infty$, $w(t)$ possesses unique zero point at some $\bar{t}$ with $\bar{t}>t^*$. So we have $g(t)\nearrow$ on $[0,\bar{t})$ and $\searrow$ on $(\bar{t},+\infty)$. We deduce from $\frac{A_2\!-\!A_3}{\tilde{d}} \Big[ \tilde{a}  \Big( \frac{\tilde{b}}{\tilde{c}} \Big)^{  \frac{2-\tilde{q}}{\tilde{p}-4} }\!+\! \frac{{\tilde{b}}^{  \frac{\tilde{p}-\tilde{q}}{\tilde{p}-4} }}{ {\tilde{c}}^{  \frac{4-\tilde{q}}{\tilde{p}-4} } }\Big]\!>\!1$ that
\begin{align*}
&\frac{2A_1 \tilde{a}}{\tilde{q} \tilde{d}}   \Big( \frac{\tilde{b}}{\tilde{c}} \Big)^{  \frac{2-\tilde{q}}{\tilde{p}-4} }\!+\!\frac{(4A_2\!-\!\tilde{p}A_3)}{\tilde{q}\tilde{d} }  \frac{{\tilde{b}}^{  \frac{\tilde{p}-\tilde{q}}{\tilde{p}-4} }}{ {\tilde{c}}^{  \frac{4-\tilde{q}}{\tilde{p}-4} } }\!>\!\frac{A_1 \tilde{a}}{ \tilde{d}}   \Big( \frac{\tilde{b}}{\tilde{c}} \Big)^{  \frac{2-\tilde{q}}{\tilde{p}-4} }+\frac{(A_2\!-\!A_3)}{ \tilde{d} }  \frac{{\tilde{b}}^{  \frac{\tilde{p}-\tilde{q}}{\tilde{p}-4} }}{ {\tilde{c}}^{  \frac{4-\tilde{q}}{\tilde{p}-4} } }
\!>\!1,
\end{align*}
where $A_1=\Big(\frac{8(4-\tilde{q})}{ \tilde{p}(\tilde{p}-2)(\tilde{p}-\tilde{q}) } \Big)^{  \frac{2-\tilde{q}}{\tilde{p}-4} }$, $A_2=\Big(\frac{8(4-\tilde{q})}{ \tilde{p}(\tilde{p}-2)(\tilde{p}-\tilde{q}) } \Big)^{  \frac{4-\tilde{q}}{\tilde{p}-4} }$ and $A_3=\Big( \frac{8(4-\tilde{q})}{ \tilde{p}(\tilde{p}-2)(\tilde{p}-\tilde{q}) } \Big)^{  \frac{\tilde{p}-\tilde{q}}{\tilde{p}-4} }$. This leads to $g(\bar{t})>g(t^*)>0$ and $f(t^*)>0$. Since $g(0)<0$, $g(\bar{t})>g(t^*)>0$ and $g(+\infty)=-\infty$, there exists unique $t_1, t_2$ ($0<t_1<t^*<\bar{t}<t_2$) such that $g(t_1)=0=g(t_2)$. Consequently, $f'(t)<0$ if $t\in(0,t_1)\cup(t_2,+\infty)$ and $f'(t)>0$ if $t\in(t_1,t_2)$. This implies that $f(t)\searrow$ on $[0,t_1)$, $\nearrow$ on $(t_1,t_2)$ and $\searrow$ on $(t_2,+\infty)$. The conclusion follows from $f(0)=0$, $f(t_2)>f(t^*)>0$ and $f(+\infty)=-\infty$.
\end{proof}

Similar to Lemma 5.1 and Lemma 5.3 in \cite{NsAe}, we can prove the following Lemmas \ref{lem3.1}-\ref{lem3.4}.

\begin{lemma}\label{lem3.1}
Let $a\!>\!0$, $b\!>\!0$, $c\!>\!0$, $2\!<\!q\!<\!\frac{10}{3}$, $\frac{14}{3}\!<\!p\!\leq\!6$ and $0\!<\!\mu\!<\!\mu^{*}$. Then the function $h$ has a local strict minimum at a negative level and a global strict maximum at a positive level. Moreover, there exist $0 < R_0 < R_1$, both depending
on $c$ and $\mu$, such that $h(R_0)=0=h(R_1)$ and $h(t)>0$ if and only if $t\in (R_0,R_1)$. Here $\mu^{*}$ was defined in \eqref{ConAt1.10}.
\end{lemma}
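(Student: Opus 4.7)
The plan is to recognize $h$ as an instance of the function treated in Lemma~\ref{LemA2.7}, and then read off the existence and location of the two zeros $R_0<R_1$ from the shape information supplied by that lemma together with the intermediate value theorem. First I would set $\tilde a=a/2$, $\tilde b=b/4$, $\tilde c=\mathcal{C}_p^{p}c^{p(1-\delta_p)}/p$, $\tilde d=\mu\mathcal{C}_q^{q}c^{q(1-\delta_q)}/q$, $\tilde p=p\delta_p$ and $\tilde q=q\delta_q$, so that $h(t)=\tilde a t^{2}+\tilde b t^{4}-\tilde c t^{\tilde p}-\tilde d t^{\tilde q}$. The ranges $2<q<\frac{10}{3}$ and $\frac{14}{3}<p\le 6$ give $\tilde q<2<4<\tilde p$; the Sobolev critical endpoint $p=6$ is absorbed by the convention $\mathcal{C}_p=\mathcal{S}^{-1/2}$ fixed at the start of Subsection~4.1, so the structural hypotheses of Lemma~\ref{LemA2.7} are satisfied.

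Next I would verify that the quantitative hypothesis of Lemma~\ref{LemA2.7} is equivalent to $\mu<\mu^{*}$. Substituting the six parameters above shows $A_{2}-A_{3}=\mathcal{C}_{p,q}$, exactly the constant appearing in \eqref{ConAt1.10}. The bracket in the hypothesis of Lemma~\ref{LemA2.7} is proportional to $1/\tilde d$, i.e.\ to $1/\mu$; collecting the $c$-powers generated by the factors $\tilde b/\tilde c$ and $\tilde b^{(\tilde p-\tilde q)/(\tilde p-4)}/\tilde c^{(4-\tilde q)/(\tilde p-4)}$ reproduces precisely the two summands in the definition of $\mu^{*}$. Hence the assumption $\mu<\mu^{*}$ is equivalent to the inequality required by Lemma~\ref{LemA2.7}, and that lemma supplies points $0<t_{1}<t_{2}$ with $h(t_{1})<0$ a local strict minimum and $h(t_{2})>0$ the global strict maximum, with $h$ strictly decreasing on $[0,t_{1}]$, strictly increasing on $[t_{1},t_{2}]$ and strictly decreasing on $[t_{2},+\infty)$.

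To produce $R_{0}$ and $R_{1}$ I would then apply the intermediate value theorem on $(t_{1},t_{2})$ and on $(t_{2},+\infty)$. Since $h(0)=0$ and $h$ is strictly decreasing on $[0,t_{1}]$, we have $h<0$ on $(0,t_{1}]$; on $[t_{1},t_{2}]$, strict monotonicity together with the sign change from $h(t_{1})<0$ to $h(t_{2})>0$ yields a unique zero $R_{0}\in(t_{1},t_{2})$; finally, on $(t_{2},+\infty)$, strict monotonicity together with $h(t_{2})>0$ and $h(+\infty)=-\infty$ gives a unique zero $R_{1}>t_{2}$. Assembling the sign information on the four resulting monotonicity intervals shows $h(t)>0$ if and only if $t\in(R_{0},R_{1})$, and by construction $R_{0}$ and $R_{1}$ depend only on $c$ and $\mu$. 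The only non-routine point in this argument is the algebraic bookkeeping of the second paragraph; once the identification with Lemma~\ref{LemA2.7} is in hand, everything else is a one-variable calculus exercise.
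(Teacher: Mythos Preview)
Your proposal is correct and follows exactly the paper's own route: the paper's proof simply substitutes $\tilde a=a/2$, $\tilde b=b/4$, $\tilde c=\mathcal{C}_p^{p}c^{p(1-\delta_p)}/p$, $\tilde d=\mu\mathcal{C}_q^{q}c^{q(1-\delta_q)}/q$, $\tilde p=p\delta_p$, $\tilde q=q\delta_q$ into Lemma~\ref{LemA2.7} and notes that the quantitative hypothesis there is precisely $\mu<\mu^{*}$. Your write-up is in fact more explicit than the paper's one-line proof, since you spell out the intermediate value theorem step yielding $R_0$ and $R_1$ from the monotonicity structure, which the paper leaves implicit.
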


\begin{proof}
Take $\tilde{a}=\frac{a}{2}$, $\tilde{b}=\frac{b}{4}$, $\tilde{c}=\frac{ \mathcal{C}_p^{p}}{p} c^{p(1-\delta_p)}$, $\tilde{d}=\frac{\mu}{q} \mathcal{C}_q^{q} c^{q(1-\delta_q)}$, $\tilde{q}=q\delta_q$ and $\tilde{p}=p\delta_p$ in Lemma  \ref{LemA2.7}, then the conclusion follows provided $0\!<\!\mu\!<\!\mu^{*}$.
\end{proof}

\begin{lemma}\label{lem3.4}
Let $a\!>\!0$, $b\!>\!0$, $c\!>\!0$, $2\!<\!q\!<\!\frac{10}{3}$, $\frac{14}{3}\!<\!p\!\leq\!6$ and $0\!<\!\mu\!<\!\min\{\mu_{*},\mu^{*}\}$, where $\mu_{*},\mu^{*}$ were defined in \eqref{ConAt1.10}. For every $u \in S_{c}$, the function $\Psi_{u}^{\mu}$ has exactly two critical points $s_{u}<t_{u} \in \mathbb{R}$ and two zeros $c_{u}<d_{u} \in \mathbb{R}$, with $s_{u}<c_{u}<t_{u}<d_{u}$. Moreover:\\
$(1)$ $s_{u} \star u \in \mathcal{P}_{+}^{c,\mu}$ and $t_{u} \star u \in \mathcal{P}_{-}^{c,\mu}$, and if $s \star u \in \mathcal{P}_{c,\mu}$, then either $s=s_{u}$ or $s=t_{u};$ \\
$(2)$ $||\nabla (s \star u)||_{2} \leq R_{0}$ for every $s \leq c_{u}$, and
$$E_{\mu}\left(s_{u} \star u\right)=\min \left\{E_{\mu}(s \star u) : s \in \mathbb{R} \text { and }||\nabla(s \star u)||_{2}<R_{0}\right\}<0;$$
$(3)$ We have
$$ E_{\mu}\left(t_{u} \star u\right)=\max \{E_{\mu}(s \star u) : s \in \mathbb{R}\}>0,$$
and $\Psi^{\mu}_{u}$ is strictly decreasing on $\left(t_{u},+\infty\right);$  \\
$(4)$ The maps $u \in S_{c} \mapsto s_{u} \in \mathbb{R}$ and $u \in S_{c} \mapsto t_{u} \in \mathbb{R}$ are of class $C^1$.
\end{lemma}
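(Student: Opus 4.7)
The plan is to recast the fiber map as a one-variable function of $t=e^s>0$ and apply the polynomial analysis of Lemma~\ref{LemA2.7}. Setting
$$
\tilde\Psi_u^\mu(t):=\Psi_u^\mu(\ln t)=\tilde a\,t^2+\tilde b\,t^4-\tilde d\,t^{q\delta_q}-\tilde c\,t^{p\delta_p},\qquad \tilde a=\tfrac{a}{2}\|\nabla u\|_2^2,\ \tilde b=\tfrac{b}{4}\|\nabla u\|_2^4,\ \tilde c=\tfrac{\|u\|_p^p}{p},\ \tilde d=\tfrac{\mu\|u\|_q^q}{q},
$$
all four coefficients are strictly positive for $u\in S_c$, and $q\delta_q<2<4<p\delta_p$ by assumption. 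I would apply Lemma~\ref{LemA2.7} with $\tilde p=p\delta_p$ and $\tilde q=q\delta_q$; its hypothesis must be verified uniformly in $u\in S_c$.

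To check that hypothesis I would bound $\tilde c$ and $\tilde d$ from above by Lemma~\ref{lem2.2} (with $\mathcal C_p=\mathcal S^{-1/2}$ when $p=6$). A direct computation shows that after substitution, the powers of $\|\nabla u\|_2$ appearing in the two summands $\frac{\tilde a}{\tilde d}(\tilde b/\tilde c)^{(2-q\delta_q)/(p\delta_p-4)}$ and $\frac{1}{\tilde d}\tilde b^{(p\delta_p-q\delta_q)/(p\delta_p-4)}/\tilde c^{(4-q\delta_q)/(p\delta_p-4)}$ cancel exactly, so that the remaining inequality is independent of $u$ and reduces to $\mu<\mu^*$ with $\mu^*$ as in \eqref{ConAt1.10}. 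Lemma~\ref{LemA2.7} then produces exactly two positive critical points $0<t_1<t_2$ of $\tilde\Psi_u^\mu$, with $\tilde\Psi_u^\mu(t_1)<0$ (strict local min) and $\tilde\Psi_u^\mu(t_2)>0$ (strict global max), and exactly two positive zeros $t_1<c'<t_2<d'$. Setting $s_u=\ln t_1$, $t_u=\ln t_2$, $c_u=\ln c'$, $d_u=\ln d'$ gives the structural ordering $s_u<c_u<t_u<d_u$.

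For (1), Corollary~\ref{coroll2.1} places $s_u\star u$ and $t_u\star u$ in $\mathcal P_{c,\mu}$; the local minimum/maximum nature forces $(\Psi_u^\mu)''(s_u)\geq 0$ and $(\Psi_u^\mu)''(t_u)\leq 0$, and the strict inequalities then follow from $\mathcal P_0^{c,\mu}=\emptyset$ (Lemma~\ref{lem3.3}, using $\mu<\mu_*$), so $s_u\star u\in\mathcal P_{+}^{c,\mu}$ and $t_u\star u\in\mathcal P_{-}^{c,\mu}$. Uniqueness comes from $\tilde\Psi_u^\mu$ having only two critical points. For (2), Lemma~\ref{lem2.2} gives the pointwise lower bound $\Psi_u^\mu(s)\geq h(e^s\|\nabla u\|_2)$ with $h$ as in \eqref{eQua2.7}; since $h>0$ on $(R_0,R_1)$ by Lemma~\ref{lem3.1}, $\Psi_u^\mu>0$ whenever $e^s\|\nabla u\|_2\in(R_0,R_1)$, and as $c_u$ is the smallest zero of $\Psi_u^\mu$ this forces $e^{c_u}\|\nabla u\|_2\leq R_0$, giving $\|\nabla(s\star u)\|_2\leq R_0$ for all $s\leq c_u$. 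The minimality of $E_\mu(s_u\star u)$ on $\{s:\|\nabla(s\star u)\|_2<R_0\}$ and its strict negativity then follow from the shape of $\Psi_u^\mu$ (decreasing on $(-\infty,s_u)$, increasing on $(s_u,t_u)$, non-positive up to $c_u$ and positive just beyond). Claim (3) is immediate from the same profile; (4) follows from the implicit function theorem applied to $(s,u)\mapsto(\Psi_u^\mu)'(s)$, for which $(\Psi_u^\mu)''(s_u)>0$ and $(\Psi_u^\mu)''(t_u)<0$ supply the required non-degeneracy.

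The main obstacle I anticipate is the algebraic verification in the second paragraph: tracking exponents to confirm that the $\|\nabla u\|_2$ factors cancel exactly and that the resulting uniform condition on $\mu$ is precisely $\mu<\mu^*$, so that the constant \eqref{ConAt1.10} is forced by the calculation rather than postulated. Once that cancellation is done, the remaining claims essentially re-package Lemmas~\ref{LemA2.7}, \ref{lem3.1}, \ref{lem3.3} together with Corollary~\ref{coroll2.1}.
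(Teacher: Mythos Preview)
Your proposal is correct and follows essentially the same approach as the paper: the paper also passes to the variable $t=e^{s}$ (equivalently sets $u_t=t^{3/2}u(t\cdot)$), applies Lemma~\ref{LemA2.7} with the same choice of $\tilde a,\tilde b,\tilde c,\tilde d,\tilde p,\tilde q$, and verifies its hypothesis via the Gagliardo--Nirenberg inequality, exhibiting explicitly the two estimates in which the powers of $\|\nabla u\|_2$ cancel so that the condition reduces to $\mu<\mu^{*}$; the remaining steps (comparison with $h$, use of $\mathcal P_0^{c,\mu}=\emptyset$ via Lemma~\ref{lem3.3}, and the implicit function theorem for (4)) coincide with yours.
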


\begin{proof}
Again we prove the case $p\!\in\!(\frac{14}{3},6)$. Letting $u\!\in \!S_c$,  then $u_t(x)=t^{\frac{3}{2}} u\left(t x\right)\in S_c$ for $t>0$. Consider the functional
$$ f(t)=E_{\mu}(u_t)=\frac{a}{2}t^{2} ||\nabla u||_{2}^2+\frac{b}{4}t^{4} ||\nabla u||_{2}^4-\mu \frac{t^{ q\delta_{q} }}{q} {||u||}_q^q-\frac{t^{ p\delta_{p}}}{p} {||u||}_p^p,~~~~~~~~\forall t\!>\!0$$
and take $\tilde{a}=\frac{a}{2}||\nabla u||_{2}^2$, $\tilde{b}=\frac{b}{4}||\nabla u||_{2}^4$, $\tilde{c}=\frac{1}{p} {||u||}_p^p$, $\tilde{d}=\frac{\mu}{q} {||u||}_q^q$, $\tilde{q}=q\delta_q$ and $\tilde{p}=p\delta_p$ in Lemma  \ref{LemA2.7}. By the following estimates
\begin{align*}
\frac{ ||\nabla u||_{2}^2 }{ ||u||_{q}^q } \Big[\frac{ ||\nabla u||_{2}^4 }{ ||u||_{p}^p } \Big]^{\frac{2-q\delta_q}{p\delta_p-4}}     \!\geq\! \frac{ ||\nabla u||_{2}^{2-q\delta_q} }{ \mathcal{C}_{q}^q   c^{q(1-\delta_q)} } \Big[\frac{ ||\nabla u||_{2}^{4-p\delta_p} }{ \mathcal{C}_{p}^p  c^{p(1-\delta_p)} } \Big]^{\frac{2-q\delta_q}{p\delta_p-4}}\!=\!\frac{ 1 }{ \mathcal{C}_{q}^q c^{q(1-\delta_q)} } \Big[\frac{ 1 }{ \mathcal{C}_{p}^p   c^{p(1-\delta_p)} } \Big]^{\frac{2-q\delta_q}{p\delta_p-4}}
\end{align*}
and
\begin{align*}
\frac{ 1 }{ ||u||_{q}^q } \frac{  \Big[ ||\nabla u||_{2}^4 \Big]^{\frac{p\delta_p-q\delta_q}{p\delta_p-4}} }{ \Big[ ||u||_{p}^p \Big]^{\frac{4-q\delta_q}{p\delta_p-4}} } \!\geq \! \frac{ 1 }{ \mathcal{C}_{q}^q  ||\nabla u||_{2}^{q\delta_q} c^{q(1-\delta_q)} } \frac{  ||\nabla u||_{2}^{q\delta_q} }{ \Big[ \mathcal{C}_{p}^p   c^{p(1-\delta_p)} \Big]^{\frac{4-q\delta_q}{p\delta_p-4}} }\!=\!\frac{ 1 }{ \mathcal{C}_{q}^q c^{q(1-\delta_q)} } \Big[\frac{ 1 }{ \mathcal{C}_{p}^p   c^{p(1-\delta_p)} } \Big]^{\frac{4-q\delta_q}{p\delta_p-4}},
\end{align*}
we deduce that $f(t)$ has a local strict minimum at a negative level and a global strict maximum at
a positive level on $[0,+\infty)$ provided $\mu\!<\!\mu^{*}$.
By monotonicity of composite functions, we derive that $\Psi_{u}^{\mu}(s):=E_{\mu}(s \star u)=f(e^s)$ has a local strict minimum at a negative level and a global strict maximum at
a positive level on $(-\infty,+\infty)$.

From (\ref{equ2.7}), we have
$$\Psi_{u}^{\mu}(s)=E_{\mu}(s \star u) \geq h\left(||\nabla(s \star u)||_{2}\right)=h\left(e^{s}||\nabla u||_{2}\right).$$
Thus, the $C^2$ function $\Psi_{u}^{\mu}$ is positive on $\left(\log \frac{R_{0}}{||\nabla u||_{2}}, \log \frac{R_{1}}{||\nabla u||_{2}}\right)$, and clearly $\Psi_{u}^{\mu}(-\infty)\!=\!0^{-}$, $\Psi_{u}^{\mu}(+\infty)\!=\!-\infty$. It follows that $\Psi_{u}^{\mu}$ has exactly two critical points $s_u\!<\!t_u$, with $s_u$ local minimum point on $(-\infty, \log \frac{R_{0}}{||\nabla u||_{2}})$ at negative level, and $t_u \!>\! s_u$ global maximum point at positive level.  By Corollary \ref{coroll2.1}, we have $s_{u} \star u$, $t_{u} \star u \in \mathcal{P}_{c,\mu}$, $s \star u \in \mathcal{P}_{c,\mu}$ implies $s \in\left\{s_{u}, t_{u}\right\}$. By minimality $(\Psi_{s_{u} \star u}^{\mu})^{\prime \prime}(0)=(\Psi_{u}^{\mu})^{\prime \prime}\left(s_{u}\right) \geq 0$, and ``=" can not hold, since $\mathcal{P}_{0}^{c,\mu}=\emptyset$; namely $s_{u} \star u \in \mathcal{P}_{+}^{c,\mu}$. Similarly, we have $t_{u} \star u \in \mathcal{P}_{-}^{c,\mu}$. By monotonicity and the behavior at infinity, $\Psi_{u}^{\mu}$ has exactly two zeros $c_{u}<d_{u}$, with $s_{u}<c_{u}<t_{u}<d_{u}$.

It remains to show that $u \mapsto s_{u}$ and $u \mapsto t_{u}$ are of class $C^1$. Consider the $C^1$ function $\Phi(s, u) :=(\Psi_{u}^{\mu})^{\prime}(s)$. By the facts that $\Phi\left(s_{u}, u\right)=0$, $\partial_{s} \Phi\left(s_{u}, u\right)>0$, and it is not possible to pass with continuity from $\mathcal{P}_{+}^{c,\mu}$ to $\mathcal{P}_{-}^{c,\mu}$ (since $\mathcal{P}_{0}^{c,\mu}=\emptyset$), then the implicit function theorem applied on $\Phi(s, u)$ gives the desired result. Similarly, we have $u \mapsto t_{u}$ is $C^{1}$.
\end{proof}

For $k>0$, let us set
$$ A_{k} :=\left\{u \in S_c :||\nabla u||_{2}<k\right\},~~~~\mbox{and}~~~~m(c,\mu):=\inf _{u \in A_{R_{0}}} E_{\mu}(u).$$

\begin{corollary} \label{coro3.5}
Let $a\!>\!0$, $b\!>\!0$, $c\!>\!0$, $2\!<\!q\!<\!\frac{10}{3}$, $\frac{14}{3}\!<\!p\!\leq\!6$ and $0\!<\!\mu\!<\!\min\{\mu_{*},\mu^{*}\}$, where $\mu_{*},\mu^{*}$ were defined in \eqref{ConAt1.10}. Then the set $\mathcal{P}_{+}^{c,\mu}$ is contained in $A_{R_{0}}=\left\{u \in S_c :||\nabla u||_{2}<R_{0}\right\}$, and $\sup _{\mathcal{P}_{+}^{c,\mu}} E_{\mu} \leq 0 \leq\inf _{\mathcal{P}_{-}^{c,\mu}} E_{\mu} $.
\end{corollary}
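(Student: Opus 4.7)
The plan is to read both assertions off from the fiber-map structure encoded in Lemma \ref{lem3.4}. Fix $v\in\mathcal{P}_+^{c,\mu}$. Since $v = 0\star v \in \mathcal{P}_{c,\mu}$, Corollary \ref{coroll2.1} shows that $s=0$ is a critical point of $\Psi_v^{\mu}$, so by Lemma \ref{lem3.4}(1) we must have $0\in\{s_v,t_v\}$. The disjointness $\mathcal{P}_+^{c,\mu}\cap\mathcal{P}_-^{c,\mu}=\emptyset$ (a direct consequence of $\mathcal{P}_0^{c,\mu}=\emptyset$ from Lemma \ref{lem3.3}) pins this down to $s_v=0$. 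Thus $v$ itself realizes the local minimum described in Lemma \ref{lem3.4}(2), and we read off immediately that $E_\mu(v)=E_\mu(s_v\star v)<0$ together with $\|\nabla v\|_2=\|\nabla(s_v\star v)\|_2\le R_0$.

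To upgrade the latter to the strict inequality $\|\nabla v\|_2<R_0$ required for $v\in A_{R_0}$, I would combine the pointwise estimate \eqref{equ2.7} with Lemma \ref{lem3.1}: one has $E_\mu(v)\ge h(\|\nabla v\|_2)$, while $h(R_0)=0$. Since we have just shown $E_\mu(v)<0$, the boundary case $\|\nabla v\|_2=R_0$ is incompatible with these two inequalities and is therefore excluded. This single argument simultaneously delivers $\mathcal{P}_+^{c,\mu}\subset A_{R_0}$ and $\sup_{\mathcal{P}_+^{c,\mu}}E_\mu\le 0$.

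For the complementary inequality, apply the same fiber-map identification to $u\in\mathcal{P}_-^{c,\mu}$: now the alternative $t_u=0$ is forced by the same disjointness argument, and Lemma \ref{lem3.4}(3) yields $E_\mu(u)=E_\mu(t_u\star u)>0$. Taking the infimum over $\mathcal{P}_-^{c,\mu}$ gives $\inf_{\mathcal{P}_-^{c,\mu}}E_\mu\ge 0$, completing the chain $\sup_{\mathcal{P}_+^{c,\mu}}E_\mu\le 0\le\inf_{\mathcal{P}_-^{c,\mu}}E_\mu$.

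The main (and essentially only) technical point is the exclusion of the boundary case $\|\nabla v\|_2=R_0$ via \eqref{equ2.7}; all the remaining bookkeeping (existence and shape of $R_0$, the decomposition $s_v<t_v$, and the strict sign information on $E_\mu(s_v\star v)$ and $E_\mu(t_v\star v)$) is already bundled into Lemmas \ref{lem3.1}, \ref{lem3.3} and \ref{lem3.4}, so no deeper obstacle is anticipated.
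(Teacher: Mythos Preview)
Your argument is correct and follows essentially the same route as the paper's proof: both reduce the corollary directly to the fiber-map information in Lemma~\ref{lem3.4}, identifying $s_v=0$ (resp.\ $t_u=0$) for $v\in\mathcal{P}_+^{c,\mu}$ (resp.\ $u\in\mathcal{P}_-^{c,\mu}$) and reading off the sign and gradient-bound conclusions. Your extra step excluding the boundary case $\|\nabla v\|_2=R_0$ via \eqref{equ2.7} and $h(R_0)=0$ is a harmless clarification (and the disjointness $\mathcal{P}_+^{c,\mu}\cap\mathcal{P}_-^{c,\mu}=\emptyset$ is simply by definition, not a consequence of $\mathcal{P}_0^{c,\mu}=\emptyset$).
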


\begin{proof}
It is a direct conclusion of Lemma \ref{lem3.4}. Indeed, $\forall u \!\in \! \mathcal{P}_{+}^{c,\mu}$, Lemma \ref{lem3.4} implies that $s_{u}\!=\!0$, $E_{\mu}(u)\!\leq\!0$ and $||\nabla u||_{2}\!<\!R_{0}$. Similarly, $u \!\in\! \mathcal{P}_{-}^{c,\mu}$ implies that $t_{u}\!=\!0$ and $E_{\mu}(u)\!\geq\!0$.
\end{proof}

Let $\overline{A_{R_{0}}}$ be the closure of $A_{R_{0}}$ and $\overline{{A_{R_{0}}} } \!\setminus \!A_{R_{0}-\rho}\!=\!\{ u\!\in\! \overline{{A_{R_{0}}} }\!: u\!\not \in \!A_{R_{0}-\rho} \}$ for some $R_{0}$ and $\rho$.
\begin{lemma} \label{lem3.6}
Let $a\!>\!0$, $b\!>\!0$, $c\!>\!0$, $2\!<\!q\!<\!\frac{10}{3}$, $\frac{14}{3}\!<\!p\!\leq\!6$ and $0\!<\!\mu\!<\!\min\{\mu_{*},\mu^{*}\}$, where $\mu_{*},\mu^{*}$ were defined in \eqref{ConAt1.10}. It holds that $m(c, \mu) \in(-\infty, 0)$ and
$$m(c, \mu)=\inf _{\mathcal{P}_{c,\mu}} E_{\mu}=\inf _{\mathcal{P}_{+}^{c,\mu}} E_{\mu}.$$
Moreover, there exists a constant $\rho>0$ (independent of $c$ and $\mu$) small enough such that
$$
m(c, \mu)<\inf_{ \overline{{A_{R_{0}}} } \setminus A_{R_{0}-\rho}} E_{\mu}.
$$
\end{lemma}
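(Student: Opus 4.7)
The proof naturally splits into three parts, all resting on the scalar pointwise bound $E_\mu(u) \geq h(\|\nabla u\|_2)$ from \eqref{equ2.7} together with the fiber-map geometry of Lemma \ref{lem3.4}.

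For the assertion $m(c, \mu) \in (-\infty, 0)$, the lower bound follows immediately from \eqref{equ2.7}: for any $u \in A_{R_0}$ one has $E_\mu(u) \geq \min_{t \in [0, R_0]} h(t) > -\infty$ since $h$ is continuous on a compact interval. For the strict inequality $m(c, \mu) < 0$, fix any $u \in S_c$ and apply Lemma \ref{lem3.4}(2): the shifted function $s_u \star u$ belongs to $\mathcal{P}_{+}^{c, \mu}$, which is contained in $A_{R_0}$ by Corollary \ref{coro3.5}, and $E_\mu(s_u \star u) < 0$; hence $m(c, \mu) \leq E_\mu(s_u \star u) < 0$.

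For the identity $m(c, \mu) = \inf_{\mathcal{P}_{c, \mu}} E_\mu = \inf_{\mathcal{P}_{+}^{c, \mu}} E_\mu$, the inclusion $\mathcal{P}_{+}^{c, \mu} \subset A_{R_0}$ gives $m(c, \mu) \leq \inf_{\mathcal{P}_{+}^{c, \mu}} E_\mu$. For the reverse inequality, given $u \in A_{R_0}$, the value $s = 0$ is an admissible competitor in the constrained minimization defining $s_u$ in Lemma \ref{lem3.4}(2), so $E_\mu(s_u \star u) \leq E_\mu(u)$; since $s_u \star u \in \mathcal{P}_{+}^{c, \mu}$, taking the infimum over $u \in A_{R_0}$ yields $\inf_{\mathcal{P}_{+}^{c, \mu}} E_\mu \leq m(c, \mu)$. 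Finally, because $\inf_{\mathcal{P}_{-}^{c, \mu}} E_\mu \geq 0 > m(c, \mu) = \inf_{\mathcal{P}_{+}^{c, \mu}} E_\mu$ by Corollary \ref{coro3.5}, the infimum over the disjoint decomposition $\mathcal{P}_{c, \mu} = \mathcal{P}_{+}^{c, \mu} \cup \mathcal{P}_{-}^{c, \mu}$ coincides with $\inf_{\mathcal{P}_{+}^{c, \mu}} E_\mu$.

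For the separation statement, the key input is the monotonicity analysis carried out inside the proof of Lemma \ref{LemA2.7}: $h$ is strictly increasing on an interval $(t_1, t_2)$ containing $R_0$, with $h(R_0) = 0$. Hence for $\rho > 0$ small, $h$ is monotone on $[R_0 - \rho, R_0]$ and $h(R_0 - \rho) \nearrow 0$ as $\rho \to 0^+$. Since $m(c, \mu) < 0$, we may choose $\rho$ small enough that $h(R_0 - \rho) > m(c, \mu)$, and then for every $u \in \overline{A_{R_0}} \setminus A_{R_0 - \rho}$ we obtain $E_\mu(u) \geq h(\|\nabla u\|_2) \geq h(R_0 - \rho) > m(c, \mu)$, which yields the claim after taking the infimum. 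The most delicate point — the parenthetical assertion that $\rho$ can be chosen independently of $c$ and $\mu$ — requires a uniform-in-$(c,\mu)$ control on both the modulus of continuity of $h$ near $R_0(c,\mu)$ and a uniform lower bound on $|m(c,\mu)|$ throughout the admissible parameter range $\mu \in (0, \min\{\mu_*, \mu^*\})$; I expect this to be the main technical obstacle and would address it by exploiting the explicit polynomial expression of $h$ and a scaling argument on its critical values.
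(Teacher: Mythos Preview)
Your proposal is correct and follows essentially the same route as the paper: the pointwise bound $E_\mu(u)\ge h(\|\nabla u\|_2)$ for the lower bound, the fiber map $s_u\star u\in\mathcal{P}_+^{c,\mu}\subset A_{R_0}$ with negative energy for $m(c,\mu)<0$, the two-sided comparison $m(c,\mu)=\inf_{\mathcal{P}_+^{c,\mu}}E_\mu$ via Lemma~\ref{lem3.4}(2) and Corollary~\ref{coro3.5}, and the separation via $h$ near $R_0$. The only cosmetic difference is that for the last step the paper invokes plain continuity of $h$ to get $h(t)\ge \tfrac{m(c,\mu)}{2}$ on $[R_0-\rho,R_0]$, whereas you use the monotonicity of $h$ on $(t_1,t_2)\ni R_0$; both are valid.

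On the point you flag as ``most delicate'' --- the independence of $\rho$ from $c$ and $\mu$ --- you are right to be suspicious: the paper simply asserts this parenthetically without justification, and since $h$, $R_0$, and $m(c,\mu)$ all depend on $(c,\mu)$, the continuity argument as written only produces a $\rho=\rho(c,\mu)$. However, this independence is never actually used downstream (in the proofs of Theorems~\ref{th1.1}--\ref{th1.3} the parameters $c,\mu$ are fixed), so you need not worry about establishing it.
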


\begin{proof}
For $u \in A_{R_{0}}$, we have $
E_{\mu}(u) \geq h\left(||\nabla u||_{2}\right) \geq \min _{t \in\left[0, R_{0}\right]} h(t)>-\infty$, and hence $m(c, \mu)>-\infty$. Moreover, for any $u\in S_c$ we have $||\nabla (s \star u)||_{2}<R_{0}$ and $E_{\mu}(s \star u)<0$ for $s \ll-1$, and hence $m(c, \mu)<0$.

By Corollary \ref{coro3.5}, we have $m(c, \mu) \leq \inf _{\mathcal{P}_{+}^{c,\mu}} E_{\mu}$ since $\mathcal{P}_{+}^{c,\mu} \subset A_{R_{0}}$. On the other hand, if $u \in A_{R_{0}}$, we have $s_{u} \star u \in \mathcal{P}_{+}^{c,\mu} \subset A_{R_{0}}$ and
$$
E_{\mu}\left(s_{u} \star u\right)=\min \left\{E_{\mu}(s \star u) : s \in \mathbb{R} \text { and }||\nabla(s \star u)||_{2}<R_{0}\right\} \leq E_{\mu}(u),
$$
which implies that $\inf _{\mathcal{P}_{+}^{c,\mu}} E_{\mu} \leq m(c, \mu)$. To prove that $\inf _{\mathcal{P}_{+}^{c,\mu}} E_{\mu}=\inf _{\mathcal{P}_{c,\mu}} E_{\mu}$, it is sufficient to recall that $E_{\mu}\geq0$ on $\mathcal{P}_{-}^{c,\mu}$, see Corollary \ref{coro3.5}.

Finally, by continuity of $h$ there exists $\rho> 0$ (independent of $c$ and $\mu$) such that $h(t)\geq \frac{m(c, \mu)}{2}$ if $t \in [R_0- \rho, R_0]$.
Therefore
$
E_{\mu}(u) \geq h\left(||\nabla u||_{2}\right) \geq \frac{m(c, \mu)}{2}>m(c, \mu)
$
for every $u\in S_c$ with $ ||\nabla u||_{2} \in [R_0- \rho, R_0]$.
\end{proof}

\begin{lemma} \label{lem3.7}
Let $a\!>\!0$, $b\!>\!0$, $c\!>\!0$, $2\!<\!q\!<\!\frac{10}{3}$, $\frac{14}{3}\!<\!p\!\leq\!6$ and $0\!<\!\mu\!<\!\min\{\mu_{*},\mu^{*}\}$. Suppose that $E_{\mu}(u)<m(c, \mu)$. Then the value $t_u$ defined by Lemma \ref{lem3.4} is negative. Here $\mu_{*},\mu^{*}$ were defined in \eqref{ConAt1.10}.
\end{lemma}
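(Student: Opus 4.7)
The plan is a short dilation-and-monotonicity contradiction argument, exploiting the fiber-map structure of Lemma \ref{lem3.4} together with the characterization $m(c,\mu)=\inf_{A_{R_0}}E_{\mu}$ from Lemma \ref{lem3.6}.

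First I would translate the hypothesis $E_{\mu}(u)<m(c,\mu)$ into a quantitative bound on $\|\nabla u\|_2$. By continuity of $E_{\mu}$ on $S_c$ one has $\inf_{\overline{A_{R_0}}}E_{\mu}=\inf_{A_{R_0}}E_{\mu}=m(c,\mu)$, so the strict inequality forces $u\notin\overline{A_{R_0}}$, i.e.\ $\|\nabla u\|_2>R_0$. Set $\bar s:=\log(R_0/\|\nabla u\|_2)<0$. For every $s\leq\bar s$ we have $\|\nabla(s\star u)\|_2=e^s\|\nabla u\|_2\leq R_0$, that is $s\star u\in\overline{A_{R_0}}$, whence $\Psi_{u}^{\mu}(s)=E_{\mu}(s\star u)\geq m(c,\mu)$. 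In particular,
\[
\Psi_{u}^{\mu}(\bar s)\;\geq\;m(c,\mu)\;>\;E_{\mu}(u)\;=\;\Psi_{u}^{\mu}(0).
\]

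Next I would localize $s_u$ strictly to the left of $\bar s$. By Corollary \ref{coro3.5}, $s_u\star u\in\mathcal{P}_{+}^{c,\mu}\subset A_{R_0}$, hence $e^{s_u}\|\nabla u\|_2<R_0$, giving $s_u<\bar s$.

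Finally I would argue by contradiction: suppose $t_u\geq 0$. Then $s_u<\bar s<0\leq t_u$, so both $\bar s$ and $0$ belong to the interval $(s_u,t_u]$. Lemma \ref{lem3.4} asserts that $\Psi_{u}^{\mu}$ has exactly the two critical points $s_u<t_u$, with $s_u$ a local minimum and $t_u$ the global maximum, so $\Psi_{u}^{\mu}$ is strictly increasing on $(s_u,t_u)$. Since $\bar s<0$, this forces $\Psi_{u}^{\mu}(\bar s)<\Psi_{u}^{\mu}(0)$, contradicting the display above. Hence $t_u<0$.

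The argument is essentially routine; the only step deserving a brief justification is the identity $\inf_{\overline{A_{R_0}}}E_{\mu}=\inf_{A_{R_0}}E_{\mu}$, which is immediate from continuity of $E_{\mu}$ on $S_c$ and the definition of the closure. The key structural observation that makes the proof go through is the inclusion $\mathcal{P}_{+}^{c,\mu}\subset A_{R_0}$, which forces the local minimum $s_u$ of the fiber map to lie to the left of the critical dilation parameter $\bar s$.
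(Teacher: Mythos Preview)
Your argument is correct and uses the same ingredients as the paper's proof (the fiber-map structure of Lemma \ref{lem3.4}, Corollary \ref{coro3.5}, and the characterization $m(c,\mu)=\inf_{A_{R_0}}E_\mu$ from Lemma \ref{lem3.6}), but organizes the contradiction differently. The paper argues via the two \emph{zeros} $c_u<d_u$ of $\Psi_u^\mu$: if $d_u\le 0$ one is done; otherwise $0\in(c_u,d_u)$ is ruled out by the sign of $E_\mu(u)$, and $c_u>0$ is ruled out by Lemma \ref{lem3.4}-(2) together with $E_\mu(s_u\star u)\ge m(c,\mu)$. You instead first deduce $\|\nabla u\|_2>R_0$ directly from $E_\mu(u)<m(c,\mu)$, introduce the explicit dilation parameter $\bar s=\log(R_0/\|\nabla u\|_2)<0$, and then exploit the strict monotonicity of $\Psi_u^\mu$ on $(s_u,t_u)$ to get the contradiction $\Psi_u^\mu(\bar s)<\Psi_u^\mu(0)$. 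Your route is slightly more direct in that it avoids any case analysis on the zeros $c_u,d_u$; the paper's route has the small advantage that it never needs to justify $\inf_{\overline{A_{R_0}}}E_\mu=m(c,\mu)$ (though, as you note, this is immediate from continuity and the dilation $s\star v$, or alternatively from the second part of Lemma \ref{lem3.6}).
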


\begin{proof}
Let $s_{u}<c_{u}<t_{u}<d_{u}$ be defined by Lemma \ref{lem3.4}. If $d_{u} \leq 0$, then $t_{u}<0$, and hence we can assume by contradiction that $d_u>0$. If $0\in(c_{u}, d_{u})$, then $E_{\mu}(u)=\Psi^{\mu}_{u}(0)>0$, which is impossible since $E_{\mu}(u)<m(c, \mu)<0$. Therefore $c_{u}>0$, and by
Lemma \ref{lem3.4}-(2)
\begin{align*}
m(c, \mu) &>E_{\mu}(u)=\Psi_{u}^{\mu}(0) \geq \inf _{s \in\left(-\infty, c_{u}\right]} \Psi_{u}^{\mu}(s) \\ & \geq \inf \left\{E_{\mu}(s \star u) : s \in \mathbb{R} \text { and }||\nabla (s \star u)||_{2}<R_{0}\right\}=E_{\mu}\left(s_{u} \star u\right) \geq m(c, \mu) \end{align*}
which is again a contradiction.
\end{proof}

\begin{lemma} \label{lem3.8}
Let $a\!>\!0$, $b\!>\!0$, $c\!>\!0$, $2\!<\!q\!<\!\frac{10}{3}$, $\frac{14}{3}\!<\!p\!\leq\!6$ and $0\!<\!\mu\!<\!\min\{\mu_{*},\mu^{*}\}$, where $\mu_{*},\mu^{*}$ were defined in \eqref{ConAt1.10}. It holds that
$$\tilde{\sigma}(c, \mu) :=\inf _{u \in \mathcal{P}_{-}^{c,\mu}} E_{\mu}(u)>0.$$
\end{lemma}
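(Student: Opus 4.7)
The plan is to use the maximum characterization of $\mathcal{P}_-^{c,\mu}$ provided by Lemma \ref{lem3.4}(3), coupled with the pointwise bound (\ref{equ2.7}) along fibers. First, observe that if $u\in\mathcal{P}_-^{c,\mu}$ then $u=0\star u\in\mathcal{P}_{c,\mu}$, so by Corollary \ref{coroll2.1} and the uniqueness clause in Lemma \ref{lem3.4}(1) either $0=s_u$ or $0=t_u$; since $s_u\star u\in\mathcal{P}_+^{c,\mu}$ and $\mathcal{P}_+^{c,\mu}\cap\mathcal{P}_-^{c,\mu}=\emptyset$, the first possibility is excluded and we must have $t_u=0$. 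Consequently, Lemma \ref{lem3.4}(3) gives the crucial identity
\[
E_\mu(u)=\Psi_u^\mu(0)=\max_{s\in\mathbb{R}}\Psi_u^\mu(s).
\]

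The key step is then to exploit this maximum together with the uniform envelope (\ref{equ2.7}) applied along the whole fiber $\{s\star u:s\in\mathbb{R}\}\subset S_c$. Since every $s\star u$ belongs to $S_c$, inequality (\ref{equ2.7}) gives
\[
\Psi_u^\mu(s)=E_\mu(s\star u)\ge h\bigl(\|\nabla(s\star u)\|_2\bigr)=h\bigl(e^s\|\nabla u\|_2\bigr),\qquad\forall s\in\mathbb{R}.
\]
Taking the supremum over $s\in\mathbb{R}$ and using the change of variable $t=e^s\|\nabla u\|_2$, which sweeps out all of $(0,\infty)$, I obtain
\[
E_\mu(u)=\max_{s\in\mathbb{R}}\Psi_u^\mu(s)\ge\sup_{t>0}h(t).
\]
The right-hand side is independent of $u$, so passing to the infimum over $\mathcal{P}_-^{c,\mu}$ yields $\tilde{\sigma}(c,\mu)\ge\sup_{t>0}h(t)$. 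By Lemma \ref{lem3.1}, the hypothesis $0<\mu<\mu^*$ ensures that $h$ admits a strict global maximum at a positive level (attained somewhere in the interval $(R_0,R_1)$ where $h>0$), so $\sup_{t>0}h(t)>0$ and the claim follows.

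I do not expect any real obstacle here: the proof is essentially immediate once one recognizes that $\mathcal{P}_-^{c,\mu}$ parametrizes fiber maxima, and that (\ref{equ2.7}) furnishes a lower envelope $h$ whose own maximum is already strictly positive under the smallness condition on $\mu$. The only subtle point worth spelling out is the verification that $t_u=0$ for every $u\in\mathcal{P}_-^{c,\mu}$, which follows cleanly from the disjointness of $\mathcal{P}_+^{c,\mu}$ and $\mathcal{P}_-^{c,\mu}$ together with Lemma \ref{lem3.4}(1); the bound $h(t_u)$ is then genuinely intrinsic to $u$ and independent of any Lagrange multiplier considerations.
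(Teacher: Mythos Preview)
Your proof is correct and follows essentially the same approach as the paper: both use that $u\in\mathcal{P}_-^{c,\mu}$ forces $t_u=0$ so that $E_\mu(u)=\max_{s\in\mathbb{R}}\Psi_u^\mu(s)$, then dominate the fiber map from below by $h(e^s\|\nabla u\|_2)$ and invoke the positive maximum of $h$ from Lemma~\ref{lem3.1}. The only cosmetic difference is that the paper picks a specific $\tau_u$ with $\|\nabla(\tau_u\star u)\|_2=t_{\max}$, whereas you take the supremum over $s$ and change variables---both yield the same lower bound $\max_{t>0}h(t)>0$.
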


\begin{proof}
Let $t_{max}$ be the strict maximum of the function $h$ at positive level, see Lemma \ref{lem3.1}. For every $u \in \mathcal{P}_{-}^{c,\mu}$, there exists $\tau_{u} \in \mathbb{R}$ such that $||\nabla \left(\tau_{u} \star u\right)||_{2}=t_{\max }$. Moreover, since $u \in \mathcal{P}_{-}^{c,\mu}$ we also have by Lemma \ref{lem3.4} that the value $0$ is the unique strict maximum of the function $\Psi_{u}^{\mu}$. Therefore
$$
E_{\mu}(u)=\Psi_{u}^{\mu}(0) \geq \Psi_{u}^{\mu}\left(\tau_{u}\right)=E_{\mu}\left(\tau_{u} \star u\right) \geq h\left(||\nabla\left(\tau_{u} \star u\right)||_{2}\right)=h\left(t_{\max }\right)>0.
$$
Since $u \in \mathcal{P}_{-}^{c,\mu}$ was arbitrarily chosen, we deduce that $\inf_{\mathcal{P}_{-}^{c,\mu}} E_{\mu} \geq \max _{\mathbb{R}} h>0$.
\end{proof}

\subsection{The existence and asymptotic results for $2\!<\!q\!<\!\frac{10}{3}$ and $\frac{14}{3}\!<\!p\!\leq\!6$}

In this Subsection, we first prove the existence results, i.e. Theorem \ref{th1.1}-(1)(2)(3) and Theorem \ref{th1.3}-(1)(2). The proof of Theorem \ref{th1.1} is divided into two parts. To begin with, we prove the existence of a local minimizer for $\left.E_{\mu}\right|_{S_{c}}$. Next, we construct a Mountain Pass type critical point for $\left.E_{\mu}\right|_{S_{c}}$. Finally, we prove the asymptotic results, i.e. Theorem \ref{th1.1}-(4)(5) and Theorem \ref{th1.3}-(3).\\

\noindent \textbf{Proof of Theorem \ref{th1.1}-(1),(2),(3):}

\noindent \textbf{(i) Existence of a local minimizer.}

Let $\{v_n\}$ be a minimizing sequence for $m(c,\mu):=\inf _{u \in A_{R_{0}}} E_{\mu}(u)$. From Section 3.3 and Lemma 7.17 in \cite{ELMA}, we have $E_{\mu}\left({|v_n|}^*\right)\!\leq\! E_{\mu}\left(v_{n}\right)$, since
\begin{align} \label{alig5.1}
\|\nabla{|v_n|}^*\|_2  \leq  \|\nabla{|v_n|} \|_2 ,~~~~||v_n||_p=||{|v_n|}^*||_p,~~~~||v_n||_q=||{|v_n|}^*||_q,
\end{align}
where ${|v_n|}^*$ is the symmetric decreasing rearrangement of $|v_n|$. So we can assume that $v_n\!\in\! S_c$ is nonnegative and radially decreasing for every $n$. By using Lemma \ref{lem3.4} and Corollary \ref{coro3.5}, we have $s_{v_{n}} \!\star\! v_{n} \!\in\! \mathcal{P}_{+}^{c,\mu}$, $||\nabla\left(s_{v_{n}} \!\star \!v_{n}\right)||_{2}\!<\!R_{0}$ and that
$$E_{\mu}\left(s_{v_{n}} \star v_{n}\right)=\min \left\{E_{\mu}(s \star v_{n}) : s \in \mathbb{R} \text { and }||\nabla(s \star v_{n})||_{2}<R_{0}\right\} \leq E_{\mu}\left(v_{n}\right).$$
Consequently, we obtain a new minimizing sequence $\left\{w_{n}=s_{v_{n}} \star v_{n}\right\}$ for $m(c,\mu)$, with
$$w_{n} \in S_{c,r} \cap \mathcal{P}_{+}^{c,\mu}~~~~\mbox{and}~~~~P_{\mu}(w_{n})=0$$
for every $n$. By Lemma \ref{lem3.6}, we have $||\nabla w_{n}||_2<R_0-\rho$ for every $n$. Hence,  the Ekeland's variational principle guarantees the existence of a new minimizing sequence $\left\{u_{n}\right\} \subset A_{R_{0}}$ for $m(c, \mu)<0$, with the property that $||u_{n}-w_{n}||_{H^1} \rightarrow 0$ as $n \rightarrow +\infty$, which is also a Palais-Smale sequence for $E_{\mu}$ on $S_{c}$. The condition $||u_{n}-w_{n}||_{H^1} \rightarrow 0$ implies
$$||\nabla u_{n}||_2 \leq R_0-\rho~~~~\mbox{and}~~~~P_{\mu}(u_{n}) \rightarrow 0~~~~\mbox{as}~~~~n \rightarrow \infty$$
and hence $\{u_n\}$ satisfies all the assumptions of Proposition \ref{prp4.1}. Therefore, up to a subsequence $u_{n} \rightarrow \tilde{u}_{\mu}$ strongly in $H^1$, $\tilde{u}_{\mu}$ is an interior local minimizer for $\left.E_{\mu}\right|_{A_{R_{0}}}$, and solves $(1.1)_{\tilde{\lambda}}$ for some $\tilde{\lambda}<0$. It is easy to know that $\tilde{u}_{\mu}$ is nonnegative and radially deceasing. The strong maximum principle implies that $\tilde{u}_{\mu}>0$.

Since any critical point of
$E_{\mu}|_{S_c}$ lies in $\mathcal{P}_{c,\mu}$ and $m(c, \mu)=\inf _{\mathcal{P}_{c,\mu}} E_{\mu}$ (see Lemma \ref{lem3.6}), we see that $\tilde{u}_{\mu}$ is a ground state for $\left.E_{\mu}\right|_{S_c}$. It only remains to prove that any ground state of $E_{\mu}|_{S_c}$ is a local minimizer of $E_{\mu}$ in $A_{R_0}$. Let then $u$ be a critical point of $E_{\mu}|_{S_c}$ with $E_{\mu}(u)=m(c, \mu)=\inf_{\mathcal{P}_{c,\mu}} E_{\mu}$. Since $E_{\mu}(u)<0<\inf_{\mathcal{P}^{c,\mu}_{-}} E_{\mu}$, necessarily $u \in \mathcal{P}^{c,\mu}_{+}$. Then Corollary \ref{coro3.5} implies that $\mathcal{P}^{c,\mu}_{+}\subset A_{R_0}$. This leads to $||\nabla u||_2<R_0$, and as a consequence $u$ is a local minimizer for $E_{\mu}|_{A_{R_0}}$.

\noindent \textbf{(ii) Existence of a Mountain Pass type solution.}

We focus now on the existence of a second critical point for $\left.E_{\mu}\right|_{S_c}$. Denote $E_{\mu}^{m}=\{u \in S_c : E_{\mu}(u) \leq m\}$. Motivated by \cite{NeJl}, we define the augmented functional $\tilde{E_{\mu}}: \mathbb{R} \times  H^1 \rightarrow \mathbb{R}$
$$\tilde{E_{\mu}}(s,u):=E_{\mu}(s \star u)=\frac{a}{2}e^{2s} ||\nabla u||_{2}^2+\frac{b}{4}e^{4s} ||\nabla u||_{2}^4-\mu \frac{e^{ q\delta_{q} s}}{q} {||u||}_q^q-\frac{e^{ p\delta_{p}s}}{p} {||u||}_p^p$$
and study $\tilde{E_{\mu}}|_{\mathbb{R}\times {S_c} }$. Notice that $S_{c,r}\!=\!H_{\mathrm{rad}}^{1} \!\cap\! S_c$ and $\tilde{E_{\mu}}$ is of class $C^1$. Theorem 1.28 in \cite{Mwlm} indicates that a critical point for $\tilde{E_{\mu}}|_{\mathbb{R}\times {S_{c,r}} }$ is a critical point for $\tilde{E_{\mu}}|_{\mathbb{R}\times {S_c} }$.

We introduce the minimax class
$$
\Gamma :=\left\{\gamma(\tau)=\big(\zeta(\tau),\beta(\tau)\big) \in C\left([0,1], \mathbb{R}\times S_{c,r}\right) ; \gamma(0) \in (0,\mathcal{P}_{+}^{c,\mu}), \gamma(1) \in (0, E_{\mu}^{2m(c, \mu)})\right\},
$$
then $\Gamma\not=\emptyset$. Indeed, $\forall u \in S_{c,r}$, by Lemma \ref{lem3.4} we know that there exists $s_{1} \gg 1$ such that
\begin{equation} \label{equa4.1}
\gamma_{u} : \tau \in[0,1] \mapsto   \big(0,\left((1-\tau) s_{u}+\tau s_{1}\right) \star u \big ) \in \mathbb{R}\times S_{c,r}
\end{equation}
is a path in $\Gamma$ (recall that $s \in \mathbb{R} \mapsto s \star u \in S_{c,r}$ is continuous, $s_{u} \star u \in \mathcal{P}_{+}^{c,\mu}$ and  $E_{\mu}(s \star u) \rightarrow-\infty$ as $s \rightarrow +\infty$). Thus, the minimax value
$$
\sigma(c, \mu) :=\inf _{\gamma \in \Gamma} \max _{(s,u) \in \gamma([0,1])} \tilde{E}_{\mu}(s,u)
$$
is a real number. We claim that
\begin{equation} \label{eq5.1}
\forall \gamma \in \Gamma~~~~\mbox{there exists}~~~~\tau_{\gamma} \in (0,1)~~~~\mbox{such that}~~~~\zeta(\tau_{\gamma}) \star \beta(\tau_{\gamma}) \in \mathcal{P}_{-}^{c,\mu}.
\end{equation}
Indeed, since $\gamma(0)\!=\!\big(\zeta(0),\beta(0)\big) \!\in\! (0,\mathcal{P}_{+}^{c,\mu})$, by Corollary \ref{coroll2.1} and Lemma \ref{lem3.4}, we have $t_{\zeta(0)\star \beta(0)}\!=\!t_{\beta(0)}\!>\!s_{\beta(0)}\!=\!0$; since $E_{\mu}(\beta(1))\!=\!\tilde{E}_{\mu}(\gamma(1)) \!\leq\! 2 m(c, \mu)$, by Lemma \ref{lem3.7}, we have
$$t_{\zeta(1)\star \beta(1)}=t_{\beta(1)}<0,$$
and moreover the map $t_{\zeta(\tau)\star \beta(\tau)}$ is continuous in $\tau$ (we refer again to Lemma \ref{lem3.4} and recall that $s \in \mathbb{R} \mapsto s \star u \in S_{c,r}$ is continuous). It follows that for every $\gamma \in \Gamma$ there exists $\tau_{\gamma} \in(0,1)$ such that $t_{\zeta(\tau_{\gamma})\star \beta(\tau_{\gamma})}=0$, and so $\zeta(\tau_{\gamma}) \star \beta(\tau_{\gamma}) \in \mathcal{P}_{-}^{c,\mu}$. Thus (\ref{eq5.1}) holds.

For every $\gamma \in \Gamma$, by (\ref{eq5.1}) we have
\begin{equation} \label{eqa6.3}
\max _{\gamma([0,1])} \tilde{E}_{\mu} \geq \tilde{E}_{\mu} \left(\gamma\left(\tau_{\gamma}\right)\right)={E}_{\mu}(\zeta(\tau_{\gamma}) \star \beta(\tau_{\gamma})) \geq \inf _{\mathcal{P}^{c,\mu}_{-}\cap {S_{c,r}}} E_{\mu},
\end{equation}
which gives $\sigma(c, \mu) \geq \inf _{\mathcal{P}_{-}^{c,\mu} \cap S_{c,r}} E_{\mu}$. On the other hand, if $u \in \mathcal{P}^{c,\mu}_{-} \cap S_{c,r},$ then $\gamma_{u}$ defined in (\ref{equa4.1}) is a path in $\Gamma$ with
$$
E_{\mu}(u)=\tilde{E}_{\mu}(0,u)=\max _{\gamma_u([0,1])} \tilde{E}_{\mu} \geq \sigma(c, \mu),
$$
which gives $ \inf _{\mathcal{P}^{c,\mu}_{-} \cap S_{c,r}} E_{\mu} \geq \sigma(c, \mu)$. This, Corollary \ref{coro3.5} and Lemma \ref{lem3.8} imply that
\begin{equation}  \label{eq5.3}
\sigma(c, \mu)=\inf _{\mathcal{P}^{c,\mu}_{-} \cap S_{c,r}} E_{\mu}>0 \geq \sup _{\left(\mathcal{P}^{c,\mu}_{+} \cup E_{\mu}^{2 m(c, \mu)}\right) \cap S_{c,r}} E_{\mu}=\sup _{\left((0,\mathcal{P}^{c,\mu}_{+}) \cup (0,E_{\mu}^{2 m(c, \mu)})\right) \cap  (\mathbb{R}\times S_{c,r})}  \tilde{E}_{\mu}.
\end{equation}

Let $\gamma_n(\tau)=\big(\zeta_n(\tau),\beta_n(\tau)\big)$ be any minimizing sequence for $\sigma(c, \mu)$ with the property that $\zeta_n(\tau)\equiv 0$ and $\beta_n(\tau)\geq0$ a.e. in ${\R}^3$ for every $\tau \in[0, 1]$ (Notice that, if $\{\gamma_n=\big(\zeta_n,\beta_n\big)\}$ is a minimizing sequence, then also $\{(0,\zeta_n\star {|\beta_n|})\}$ has the same property). Take
$$X=\mathbb{R}\times S_{c,r},~~~~\mathcal{F}=\{\gamma([0,1]):~~\gamma \in \Gamma\},~~~~B=(0,\mathcal{P}_{+}^{c,\mu}) \cup (0, E_{\mu}^{2m(c, \mu)}),$$
$$F=\{(s,u)\in\mathbb{R}\times S_{c,r} ~~~~|~~~~\tilde{E}_{\mu}(s,u)\geq \sigma(c, \mu) \},~~~~A=\gamma([0,1]),~~~~A_n=\gamma_n([0,1])$$
in Lemma \ref{lem3.10}. We need to checked that $\mathcal{F}$ is a homotopy stable family of compact subsets of $X$ with extended closed boundary $B$, and that $F$ is a dual set for $\mathcal{F}$, in the sense that assumptions (1) and (2) in Lemma \ref{lem3.10} are satisfied.

Indeed, since $\sigma(c, \mu)=\inf _{\mathcal{P}^{c,\mu}_{-} \cap S_{c,r}} E_{\mu}$, (\ref{eqa6.3}) $\Rightarrow \gamma\left(\tau_{\gamma}\right)=(\zeta(\tau_{\gamma}), \beta(\tau_{\gamma}))\in A \cap F$,  (\ref{eq5.3}) $\Rightarrow F \cap B=\emptyset$ and (2) in Lemma \ref{lem3.10}, then $A \cap F\not=\emptyset$ and $F \cap B=\emptyset$ give (1) in Lemma \ref{lem3.10}. For every $ \gamma \in \Gamma$, since $\gamma(0) \in (0,\mathcal{P}_{+}^{c,\mu})$ and $\gamma(1) \in (0, E_{\mu}^{2m(c, \mu)})$, we have $\gamma(0), \gamma(1) \in B$. Then for any set $A$ in $\mathcal{F}$ and any $\eta\in C([0,1]\times X;X)$
satisfying $\eta(t,x)=x$ for all $(t,x)\in (\{0\}\times X)\cup ([0,1]\times B)$, it holds that $\eta(1,\gamma(0))=\gamma(0),~~~~\eta(1,\gamma(1))=\gamma(1)$. So we have $\eta(\{1\}\times A)\in \mathcal{F}$.

Consequently, by Lemma \ref{lem3.10}, there exists a Palais-Smale sequence $\{(s_n,w_n)\}\subset \mathbb{R}\times S_{c,r}$ for $\tilde{E_{\mu}}|_{\mathbb{R}\times {S_{c,r}} }$ at level $\sigma(c, \mu)>0$ such that
\begin{equation}  \label{eq5.4}
\partial_{s} \tilde{E_{\mu}}\left(s_{n}, w_{n}\right) \rightarrow 0 \quad \text { and } \quad\left\|\partial_{u} \tilde{E_{\mu}}\left(s_{n}, w_{n}\right)\right\|_{\left(T_{w_{n}} S_{c,r}\right)^{*}} \rightarrow 0 \quad \text { as } n \rightarrow \infty,
\end{equation}
with the additional property that
\begin{equation}   \label{equq5.5}
\left|s_{n}\right|+\operatorname{dist}_{H^1}\left(w_{n}, \beta_{n}([0,1])\right) \rightarrow 0 \quad \text { as } n \rightarrow \infty.
\end{equation}
From (\ref{eq5.4}), we have $P_{\mu}\left(s_n \star w_{n}\right) \rightarrow 0$ and that
\begin{equation} \label{equa5.6}
\begin{gathered}
a e^{2 s_n}  \int_{\mathbb{R}^{3}} \nabla w_{n} \nabla {\varphi}+b e^{4 s_n}  \| \nabla w_{n}\|_2^2   \int_{\mathbb{R}^{3}} \nabla w_{n} \nabla {\varphi}-\mu e^{ q\delta_{q} s_n}\int_{\mathbb{R}^{3}} \left|w_{n}\right|^{q\!-\!2} w_{n} {\varphi} \\
-e^{ p\delta_{p} s_n}\int_{\mathbb{R}^{3}} \left|w_{n}\right|^{p\!-\!2} w_{n} {\varphi}=o(1)\|\varphi\|_{H^1},~~~~~~~~\forall \varphi \in T_{w_{n}} S_{c,r}.
\end{gathered}
\end{equation}
By using (\ref{equq5.5}), we know that ${s_n}$ is bounded from above and from below. Consequently,
\begin{equation} \label{equa5.10}
\langle E_{\mu}'\left(s_{n} \star w_{n}\right), s_{n} \star \varphi \rangle=o(1)\|\varphi\|_{H^1}=o(1)\left\|s_{n} \star \varphi\right\|_{H^1}~~\text{as}~~n \rightarrow \infty, \forall \varphi \in T_{w_{n}} S_{c,r}.
\end{equation}
From (\ref{equa5.10}) and Lemma \ref{lem2.18}, we see that $\{u_n:=s_n\star w_n\} \subset S_{c,r}$ is a Palais-Smale sequence for $E_{\mu}|_{S_{c,r}}$ at level $\sigma(c,\mu)>0$, with $P_{\mu}(u_n)\to0$. Therefore, all the assumptions of Proposition \ref{prp4.1} are satisfied, and we deduce that up to a subsequence $u_{n} \rightarrow \hat{u}_{\mu}$ strongly in $H^{1}$, with $\hat{u}_{\mu} \in S_{c,r}$ nonnegative radial solution to $(1.1)_{\hat{\lambda}}$ for some $\hat{\lambda}<0$. The strong maximum principle implies that $\hat{u}_{\mu}>0$.
\qed

\noindent \textbf{Proof of Theorem \ref{th1.3}-(1),(2):} \\
Imitating the proof of Theorem \ref{th1.1}-(1),
we get a Palais-Smale sequence $\{u_n\}$ for $E_{\mu}|_{S_{c}}$ with
$$||\nabla u_{n}||_2 \leq R_0-\rho~~~~\mbox{and}~~~~P_{\mu}(u_{n}) \rightarrow 0~~~~\mbox{as}~~~~n \rightarrow \infty$$
and $u_n$ is nonnegative and radially decreasing for every $n$. Hence $\{u_n\}$ satisfies all the assumptions of Proposition \ref{prp4.2}. We show that alternative (ii) in Proposition \ref{prp4.2} occurs. Otherwise, up to a subsequence $u_{n} \rightharpoonup \tilde{u}_{\mu}\not \equiv 0$ weakly in $H^1(\R^3)$ but not strongly, where $\tilde{u}_{\mu} $ is a solution to $(3.4)_{\tilde{\lambda}}$ for some $\tilde{\lambda}<0$, and
$$
I_{\mu}(\tilde{u}_{\mu}):= \Big(\frac{a}{2}+\frac{Bb}{4}\Big){\| \nabla \tilde{u}_{\mu} \|}_2^2- \frac{1}{6}{\|\tilde{u}_{\mu}\|}_6^6-\frac{\mu}{q}{\| \tilde{u}_{\mu} \|}_q^q \leq m(c, \mu)-\frac{a\mathcal{S}\Lambda}{3}\!-\!\frac{b\mathcal{S}^2{\Lambda}^2}{12},
$$
where $B:=\mathop {\lim }\limits_{n  \to \infty}||\nabla u_{n}||_{2}^{2}\geq{\| \nabla \tilde{u}_{\mu} \|}_2^2>0$ and $\Lambda=\frac{b{\mathcal{S}}^2}{2}
+\sqrt{a\mathcal{S}+\frac{b^2{\mathcal{S}}^4}{4}}$. Since $\tilde{u}_{\mu}$ solves $(3.4)_{\tilde{\lambda}}$, we get the Pohozaev identity
$Q_{\mu}(\tilde{u}_{\mu}):=(a+Bb)||\nabla \tilde{u}_{\mu}||_{2}^2-\mu\delta_{q}{||\tilde{u}_{\mu}||}_q^q- {||\tilde{u}_{\mu}||}_6^6=0$.
By using $||\tilde{u}_{\mu}||_2 \leq c$ and $I_{\mu}(\tilde{u}_{\mu})
=\frac{a}{3}{\| \nabla \tilde{u}_{\mu} \|}_2^2+\frac{Bb}{12}{\| \nabla \tilde{u}_{\mu} \|}_2^2-\mu
\left(\frac{1}{q}-\frac{\delta_q}{6}\right){||\tilde{u}_{\mu}||}_q^q$, we have
\begin{align}  \label{eqa5.5}
 m(c, \mu) & 
 \geq\frac{a\mathcal{S}\Lambda}{3}\!+\!\frac{b\mathcal{S}^2{\Lambda}^2}{12}+\frac{a}{3}{\| \nabla \tilde{u}_{\mu} \|}_2^2+\frac{Bb}{12}{\| \nabla \tilde{u}_{\mu} \|}_2^2-\mu
\left(\frac{1}{q}-\frac{\delta_q}{6}\right){||\tilde{u}_{\mu}||}_q^q \nonumber \\
& \geq \frac{a\mathcal{S}\Lambda}{3}\!+\!\frac{b\mathcal{S}^2{\Lambda}^2}{12}
+\frac{b}{12}{\| \nabla \tilde{u}_{\mu} \|}_2^4-\mu
\left(\frac{1}{q}-\frac{\delta_q}{6}\right)\mathcal{C}_q^{q}c^{q(1-\delta_q)}{\| \nabla \tilde{u}_{\mu} \|}_2^{q\delta_q}.
\end{align}
Denote $g(t)\!=\!\frac{b}{12}t^4-\mu
\left(\frac{1}{q}\!-\!\frac{\delta_q}{6}\right)\mathcal{C}_q^{q}c^{q(1-\delta_q)}t^{q\delta_q},~~~~\forall t\geq0$. By using $\mu\!<\!\mu^{**}$, we have
$\min _{t\geq0} g(t)=-\frac{b}{3}(\frac{1}{q\delta_q}-\frac{1}{4})t_0^4
>-\frac{a\mathcal{S}\Lambda}{3}\!-\!\frac{b\mathcal{S}^2{\Lambda}^2}{12}$ for $t_0={\Big[ \frac{\mu \delta_q(6-q\delta_q)\mathcal{C}_{q}^{q}c^{q(1-\delta_q)}}{2b} \Big]}^{\frac{1}{4-q\delta_q}}$. 
Then (\ref{eqa5.5}) implies that
\begin{equation*}
0>m(c, \mu) \geq \frac{a\mathcal{S}\Lambda}{3}\!+\!\frac{b\mathcal{S}^2{\Lambda}^2}{12}+g(||\nabla \tilde{u}_{\mu}||_{2}) \geq \frac{a\mathcal{S}\Lambda}{3}\!+\!\frac{b\mathcal{S}^2{\Lambda}^2}{12}+\min _{t\geq0} g(t)>0.
\end{equation*}
Consequently, up to a subsequence $u_{n} \rightarrow \tilde{u}_{\mu}$ strongly in $H^1$, $\tilde{u}_{\mu}$ is an interior local minimizer for $\left.E_{\mu}\right|_{A_{R_{0}}}$, and solves $(1.1)_{\tilde{\lambda}}$ for some $\tilde{\lambda}<0$. Moreover, $\tilde{u}_{\mu}$ is nonnegative and  radially decreasing and the strong maximum principle implies that $\tilde{u}_{\mu}>0$. Since any critical point of
$E_{\mu}|_{S_{c}}$ lies in $\mathcal{P}_{c,\mu}$ and $m(c, \mu)=\inf _{\mathcal{P}_{c,\mu}} E_{\mu}$ (see Lemma \ref{lem3.6}), we see that $\tilde{u}_{\mu}$ is a ground state for $\left.E_{\mu}\right|_{S_c}$. Similar to the proof of Theorem \ref{th1.1}-(1), we can show that any ground state of $E_{\mu}|_{S_c}$ is a local minimizer of $E_{\mu}$ in $A_{R_0}$. \qed \\ 

To obtain the asymptotic property of $m(c, \mu)$ and $\sigma(c, \mu)$ as $\mu \to 0^{+}$, we need to study equation $(1.1)_{\lambda}$ with $\mu=0$. Although it has been studied in \cite{y,XzYz}, we still give a detailed proof as we obtain a ground state solution. Modify the arguments in Section 2, especially Lemma \ref{lem3.3} and Lemma \ref{lem3.4}, we can derive the following Lemmas \ref{lem6.1}-\ref{lem6.2}.

\begin{lemma}\label{lem6.1}
Let $a\!>\!0$, $b\!>\!0$, $c\!>\!0$, $\frac{14}{3}\!<\!p\!<\!6$ and $\mu\!=\!0$. Then $\mathcal{P}_{0}^{c, \mu}=\emptyset$, and $\mathcal{P}_{c, \mu}$ is a smooth manifold of codimension 2 in $H^{1}(\R^3)$.
\end{lemma}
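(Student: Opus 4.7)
\textbf{Proof plan for Lemma \ref{lem6.1}.}

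The plan is to follow the same two-step scheme used in Lemma \ref{lem3.3}, with the simplification that the $q$-term is absent. First I will show $\mathcal{P}_{0}^{c,0}=\emptyset$ by a purely algebraic argument: for $u\in\mathcal{P}_{0}^{c,0}$ I combine the identities $P_{0}(u)=a\|\nabla u\|_{2}^{2}+b\|\nabla u\|_{2}^{4}-\delta_{p}\|u\|_{p}^{p}=0$ and $(\Psi_{u}^{0})''(0)=2a\|\nabla u\|_{2}^{2}+4b\|\nabla u\|_{2}^{4}-p\delta_{p}^{2}\|u\|_{p}^{p}=0$ to eliminate $\|u\|_{p}^{p}$, obtaining
\begin{equation*}
(2-p\delta_{p})\,a\|\nabla u\|_{2}^{2}+(4-p\delta_{p})\,b\|\nabla u\|_{2}^{4}=0.
\end{equation*}
Since $\frac{14}{3}<p<6$ gives $p\delta_{p}>4>2$, both coefficients are strictly negative, forcing $\|\nabla u\|_{2}=0$. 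But $u\in S_{c}$ with $c>0$ rules this out, so $\mathcal{P}_{0}^{c,0}=\emptyset$.

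Next I will show $\mathcal{P}_{c,0}$ is a smooth codimension-2 submanifold by the regular-value theorem applied to the $C^{1}$ map $G:H^{1}(\R^{3})\to\R^{2}$ defined by $G(u)=\big(\|u\|_{2}^{2}-c^{2},\,P_{0}(u)\big)$. For $u\in\mathcal{P}_{c,0}$ I must exhibit two tangent directions on which $dG(u)$ acts as a linearly independent pair. In the direction $w=u$ one immediately gets $d(\|\cdot\|_{2}^{2})(u)[u]=2c^{2}\neq 0$. For the second direction I use the dilation $v:=\frac{d}{ds}(s\star u)|_{s=0}$, which lies in $T_{u}S_{c}$ (so $d(\|\cdot\|_{2}^{2})(u)[v]=0$) and satisfies
\begin{equation*}
dP_{0}(u)[v]=\frac{d}{ds}P_{0}(s\star u)\Big|_{s=0}=\left(\Psi_{u}^{0}\right)''(0)\neq 0,
\end{equation*}
because $u\in\mathcal{P}_{c,0}\setminus\mathcal{P}_{0}^{c,0}$ by the first step. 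These two vectors show that the image of $dG(u)$ is all of $\R^{2}$, so $(0,0)$ is a regular value of $G$ and $\mathcal{P}_{c,0}=G^{-1}(0,0)$ is a $C^{1}$ submanifold of codimension $2$.

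The only point requiring any care is the algebraic elimination in step one, where one must keep track of the signs of $(2-p\delta_{p})$ and $(4-p\delta_{p})$; this is the analogue of (and strictly easier than) the corresponding estimate in Lemma \ref{lem3.3}, since no lower bound on a Gagliardo–Nirenberg term involving $\mu$ is needed. I do not foresee a genuine obstacle; the argument is the $\mu=0$ degeneration of the scheme already detailed for Lemma \ref{lem3.3}, and the manifold step is a routine application of the implicit function theorem as in \cite{NsAe}.
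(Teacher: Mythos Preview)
Your proposal is correct and takes essentially the same approach as the paper. The paper's own proof is the single sentence ``The proof is similar to that of Lemma \ref{lem3.3}'', and you have carried out precisely that specialization to $\mu=0$: the algebraic elimination in step one is the simplification of the estimates in Lemma \ref{lem3.3} when the $q$-term drops out, and the manifold step is exactly the regular-value argument the paper defers to \cite{NsAe}.
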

\begin{proof}
The proof is similar to that of Lemma \ref{lem3.3}.
\end{proof}

\begin{lemma}\label{lem6.2}
Let $a\!>\!0$, $b\!>\!0$, $c\!>\!0$, $\frac{14}{3}\!<\!p\!<\!6$ and $\mu\!=\!0$. For every $u \in S_c$, there exists a unique $t_{u} \in \mathbb{R}$ such that $t_{u} \star u \in \mathcal{P}_{c,\mu}$. $t_{u}$ is the unique critical point of the function $\Psi_{u}^{\mu}$, and is a strict maximum point at positive level. Moreover: \\
\noindent $(1)$ $\mathcal{P}_{c,\mu}=\mathcal{P}_{-}^{c,\mu}$.\\
\noindent $(2)$ $\Psi_{u}^{\mu}$ is strictly decreasing and concave on $\left(t_{u},+\infty\right)$.\\
$(3)$ The maps $u \in S_c \mapsto t_{u} \in \mathbb{R}$ are of class $C^1$.\\
$(4)$ If $P_{\mu}(u)<0$, then $t_{u}<0$.
\end{lemma}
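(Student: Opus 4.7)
The plan is to analyze the one-variable map $\Psi_{u}^{0}(s)=\frac{a}{2}e^{2s}\|\nabla u\|_2^2+\frac{b}{4}e^{4s}\|\nabla u\|_2^4-\frac{e^{p\delta_p s}}{p}\|u\|_p^p$ directly, exploiting the fact that in the $\mu=0$ regime with $\frac{14}{3}<p<6$ the three exponents $2$, $4$, $p\delta_p$ satisfy $2<4<p\delta_p$. First I would establish the uniqueness of the critical point: after the substitution $\tau=e^{s}>0$, the equation $(\Psi_{u}^{0})^{\prime}(s)=0$ becomes
\begin{equation*}
a\|\nabla u\|_2^{2}\,\tau^{2-p\delta_p}+b\|\nabla u\|_2^{4}\,\tau^{4-p\delta_p}=\delta_p\|u\|_p^{p},
\end{equation*}
whose left-hand side is strictly decreasing in $\tau$ from $+\infty$ to $0$ since both exponents $2-p\delta_p,4-p\delta_p<0$. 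This yields a unique $t_u\in\mathbb{R}$ such that $(\Psi_u^0)'(t_u)=0$, equivalently $t_u\star u\in\mathcal{P}_{c,\mu}$ by Corollary \ref{coroll2.1}.

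Next I would read off the behaviour at infinity: $\Psi_u^0(s)\to 0^+$ as $s\to-\infty$ (the $e^{2s}$ term dominates) and $\Psi_u^0(s)\to-\infty$ as $s\to+\infty$ (the $-e^{p\delta_p s}$ term dominates). Combined with the unique critical point, $t_u$ must be a strict global maximum at a positive level. To confirm it is of ``minus'' type, I would use the critical point relation $\delta_p e^{p\delta_p t_u}\|u\|_p^p=ae^{2t_u}\|\nabla u\|_2^2+be^{4t_u}\|\nabla u\|_2^4$ to substitute into
\begin{equation*}
(\Psi_{u}^{0})^{\prime\prime}(t_u)=(2-p\delta_p)ae^{2t_u}\|\nabla u\|_2^{2}+(4-p\delta_p)be^{4t_u}\|\nabla u\|_2^{4}<0,
\end{equation*}
since $p\delta_p>4$. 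This establishes $\mathcal{P}_{c,\mu}=\mathcal{P}_{-}^{c,\mu}$, giving (1).

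For (2), strict monotonicity on $(t_u,+\infty)$ follows from the uniqueness of the critical point and the behaviour at $+\infty$. For concavity, if $s>t_u$ then $(\Psi_u^0)'(s)<0$, which yields $ae^{2s}\|\nabla u\|_2^{2}+be^{4s}\|\nabla u\|_2^{4}<\delta_p e^{p\delta_p s}\|u\|_p^p$; inserting this bound in the expression for $(\Psi_u^0)''(s)$ gives
\begin{equation*}
(\Psi_u^0)''(s)<(2-p\delta_p)ae^{2s}\|\nabla u\|_2^{2}+(4-p\delta_p)be^{4s}\|\nabla u\|_2^{4}<0,
\end{equation*}
exactly as above. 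For (3), I apply the implicit function theorem to $\Phi(s,u):=(\Psi_u^\mu)'(s)$ at $(t_u,u)$: we have $\Phi(t_u,u)=0$ and $\partial_s\Phi(t_u,u)=(\Psi_u^0)''(t_u)<0$, yielding a $C^1$ map $u\mapsto t_u$. Finally (4) is immediate: $P_0(u)=(\Psi_u^0)'(0)<0$ and $(\Psi_u^0)'$ is positive on $(-\infty,t_u)$ and negative on $(t_u,+\infty)$, forcing $t_u<0$.

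No step is a genuine obstacle: the crucial simplification compared with Lemma \ref{lem3.4} is the absence of the subcritical term $\mu\|u\|_q^q$, which is what made the fiber map have two critical points there. Here the polynomial substitution collapses the problem to a monotone equation, so the only thing to keep in mind is to use the critical-point relation correctly when computing $(\Psi_u^0)''$, exploiting $p\delta_p>4$ throughout.
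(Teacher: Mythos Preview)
Your proof is correct and complete. The paper itself gives no detailed argument, merely referring to Lemma~6.1 of \cite{NSaE}; your direct analysis of the fiber map via the substitution $\tau=e^{s}$, reducing $(\Psi_u^0)'(s)=0$ to a strictly monotone equation in $\tau$, is precisely the standard route and matches the spirit of both that reference and the paper's own Lemma~\ref{LeMal2.8}.
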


\begin{proof}
The proof is similar to that of Lemma 6.1 in \cite{NSaE}. 
\end{proof}


\begin{lemma}\label{lem6.3}
Let $a\!>\!0$, $b\!>\!0$, $c\!>\!0$, $\frac{14}{3}\!<\!p\!<\!6$ and $\mu\!=\!0$, then
$m(c, 0) \!:=\!\inf _{u \in \mathcal{P}_{c,0}} E_{0}(u)\!>\!0$.
\end{lemma}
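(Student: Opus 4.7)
The plan is a direct computation: use the Pohozaev identity to rewrite $E_0$ on $\mathcal{P}_{c,0}$ as a positive combination of $\|\nabla u\|_2^2$ and $\|\nabla u\|_2^4$, then use Gagliardo--Nirenberg to get a uniform positive lower bound on $\|\nabla u\|_2$ over $\mathcal{P}_{c,0}$.

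First I would observe that by Lemma \ref{lem6.2}, $\mathcal{P}_{c,0}=\mathcal{P}_{-}^{c,0}$ is non-empty: for any $u\in S_c$, there is $t_u\in\mathbb{R}$ with $t_u\star u\in \mathcal{P}_{c,0}$. Next, for any $u\in \mathcal{P}_{c,0}$, the Pohozaev identity gives
\[
\|u\|_p^p=\frac{1}{\delta_p}\bigl(a\|\nabla u\|_2^2+b\|\nabla u\|_2^4\bigr),
\]
and substituting this into $E_0(u)=\tfrac{a}{2}\|\nabla u\|_2^2+\tfrac{b}{4}\|\nabla u\|_2^4-\tfrac{1}{p}\|u\|_p^p$ yields
\[
E_0(u)=\Bigl(\frac{1}{2}-\frac{1}{p\delta_p}\Bigr)a\|\nabla u\|_2^2+\Bigl(\frac{1}{4}-\frac{1}{p\delta_p}\Bigr)b\|\nabla u\|_2^4.
\]
Since $p\in(\tfrac{14}{3},6)$ we have $p\delta_p>4>2$, so both coefficients are strictly positive and in particular $E_0(u)>0$ for every $u\in\mathcal{P}_{c,0}$.

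To upgrade this to $\inf_{\mathcal{P}_{c,0}} E_0>0$, I need a uniform lower bound on $\|\nabla u\|_2$ along $\mathcal{P}_{c,0}$. This comes from Gagliardo--Nirenberg: plugging $\|u\|_p^p\leq \mathcal{C}_p^p\|\nabla u\|_2^{p\delta_p}c^{p(1-\delta_p)}$ into the Pohozaev identity gives
\[
a\|\nabla u\|_2^2\leq a\|\nabla u\|_2^2+b\|\nabla u\|_2^4=\delta_p\|u\|_p^p\leq \delta_p\,\mathcal{C}_p^p\,c^{p(1-\delta_p)}\|\nabla u\|_2^{p\delta_p}.
\]
Since $p\delta_p>2$, this forces
\[
\|\nabla u\|_2^{\,p\delta_p-2}\geq \frac{a}{\delta_p\,\mathcal{C}_p^p\,c^{p(1-\delta_p)}}=:\kappa>0,
\]
uniformly in $u\in\mathcal{P}_{c,0}$. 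Combining with the expression for $E_0$ above,
\[
E_0(u)\geq\Bigl(\frac{1}{2}-\frac{1}{p\delta_p}\Bigr)a\,\kappa^{2/(p\delta_p-2)}>0
\]
for every $u\in\mathcal{P}_{c,0}$, and taking the infimum yields $m(c,0)>0$.

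There is no real obstacle here: the argument is entirely algebraic once the Pohozaev identity and Gagliardo--Nirenberg are invoked. The only point that requires care is the sign of the coefficients $\tfrac{1}{2}-\tfrac{1}{p\delta_p}$ and $\tfrac{1}{4}-\tfrac{1}{p\delta_p}$, which is guaranteed precisely by the $L^2$-supercritical assumption $p>\tfrac{14}{3}$ (i.e.\ $p\delta_p>4$).
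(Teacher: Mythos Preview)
Your proof is correct and follows essentially the same approach as the paper: substitute the Pohozaev identity into $E_0$ to express it as a positive combination of $\|\nabla u\|_2^2$ and $\|\nabla u\|_2^4$, then use Gagliardo--Nirenberg together with the Pohozaev identity to obtain a uniform positive lower bound on $\|\nabla u\|_2$ over $\mathcal{P}_{c,0}$. The only cosmetic difference is that the paper drops the $a$-term and invokes $p\delta_p>4$ for the lower bound, whereas you drop the $b$-term and use $p\delta_p>2$; both are equally valid.
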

\begin{proof}
By (\ref{equ2.2}) and $P_{0}(u)\!=\!0$, we have $a||\nabla u||_{2}^2+b||\nabla u||_{2}^4\!=\!\delta_p{||u||}_p^p \!\leq\! \delta_p \mathcal{C}_p^{p}\left\|\nabla u\right\|_{2}^{p\delta_p} c^{p(1-\delta_p)}$. So we get $\inf _{u \in \mathcal{P}_{c,0}}||\nabla u||_{2}\geq C>0$ from $p\delta_p>4$. As $P_{0}(u)=0$, we have
$$ \inf _{u \in \mathcal{P}_{c,0}} E_{0}\left(u\right)= \inf_{u \in \mathcal{P}_{c,0}} \Big\{(\frac{a}{2}-\frac{a}{p\delta_p})||\nabla u||_{2}^{2}+(\frac{b}{4}-\frac{b}{p\delta_p})||\nabla u||_{2}^{4}\Big \}\geq C>0.$$
\end{proof}

\begin{lemma}\label{lem6.4}
Let $a\!>\!0$, $b\!>\!0$, $c\!>\!0$, $\frac{14}{3}\!<\!p\!<\!6$ and $\mu\!=\!0$. There exists $k\!>\!0$ sufficiently small such that
$$
0<\sup_{\overline{A_{k}}} E_{0}<m(c, 0) \quad \text{and} \quad u\in \overline{A_{k}} \Longrightarrow E_{0}(u)>0,~~~~P_{0}(u)>0,
$$
where $A_{k} :=\left\{u \in S_c :||\nabla u||_{2} < k\right\}$.
\end{lemma}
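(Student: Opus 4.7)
The plan is to exploit the gap between the $L^2$-subcritical control of the kinetic terms and the $L^2$-supercritical growth of the $p$-term, which becomes available precisely because $p>\frac{14}{3}$ forces $p\delta_p>4$.

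First I would apply the Gagliardo--Nirenberg inequality from Lemma \ref{lem2.2}: for every $u\in S_c$,
\[
\|u\|_p^p \le \mathcal{C}_p^{p}\,c^{p(1-\delta_p)}\,\|\nabla u\|_2^{p\delta_p},
\]
with $p\delta_p=\frac{3(p-2)}{2}>4$. Setting $C_1:=\mathcal{C}_p^{p}c^{p(1-\delta_p)}$ and $t:=\|\nabla u\|_2$, this immediately yields the two pointwise bounds
\[
E_0(u)\ge \tfrac{a}{2}t^2+\tfrac{b}{4}t^4-\tfrac{C_1}{p}\,t^{p\delta_p},\qquad
P_0(u)\ge a t^2+b t^4-\delta_p C_1\,t^{p\delta_p}.
\]

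Next I would observe that since $p\delta_p-2>2$, each of the expressions
\[
\tfrac{a}{2}+\tfrac{b}{4}t^2-\tfrac{C_1}{p}t^{p\delta_p-2},\qquad a+bt^2-\delta_p C_1\,t^{p\delta_p-2}
\]
tends to $a/2>0$ and $a>0$, respectively, as $t\to 0^+$. Hence there exists $k_1>0$ such that for all $u\in\overline{A_{k_1}}$,
\[
E_0(u)\ge \tfrac{a}{4}\|\nabla u\|_2^2,\qquad P_0(u)\ge \tfrac{a}{2}\|\nabla u\|_2^2.
\]
Because every $u\in S_c$ satisfies $\|u\|_2=c>0$, one has $\|\nabla u\|_2>0$ (otherwise $u$ would be constant and hence zero in $L^2(\mathbb{R}^3)$), so both inequalities above are strict. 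This gives $E_0(u)>0$ and $P_0(u)>0$ throughout $\overline{A_{k_1}}$.

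Finally, for the separation from $m(c,0)$, I would use the trivial upper bound
\[
\sup_{\overline{A_k}} E_0(u)\le \tfrac{a}{2}k^2+\tfrac{b}{4}k^4,
\]
which tends to $0$ as $k\to 0^+$. Since Lemma \ref{lem6.3} provides $m(c,0)>0$, I can shrink $k$ further to some $k\in(0,k_1]$ such that $\tfrac{a}{2}k^2+\tfrac{b}{4}k^4<m(c,0)$, and this $k$ satisfies all three requirements. There is no real obstacle here: the lemma is essentially a small-ball estimate, and the only point that requires a moment's care is noting that $\|\nabla u\|_2$ cannot vanish on $S_c$, so that the strict positivity of $E_0$ and $P_0$ on all of $\overline{A_k}$ (not merely on $A_k\setminus\{0\}$) is automatic.
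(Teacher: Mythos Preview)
Your proof is correct and follows essentially the same route as the paper: Gagliardo--Nirenberg gives lower bounds for $E_0$ and $P_0$ that are positive for small $\|\nabla u\|_2$, and the trivial upper bound $E_0(u)\le \tfrac{a}{2}k^2+\tfrac{b}{4}k^4$ together with $m(c,0)>0$ from Lemma~\ref{lem6.3} yields the separation. The only cosmetic difference is that the paper drops the $a$-term and compares $t^{p\delta_p}$ against the quartic $b t^4$, whereas you keep the $a$-term and compare against the quadratic; since $p\delta_p>4>2$, both choices work equally well.
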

\begin{proof}
By using (\ref{equ2.2}), we have
\begin{align*}
E_{0}(u) \!\geq\! \frac{b{||\nabla u||}_2^4}{4}\!-\!\frac{\mathcal{C}_p^p c^{p(1-\delta_p)}}{p} {||\nabla u||}_2^{p\delta_p},~~~~
P_{0}(u)\!\geq\!  b{||\nabla u||}_2^4\!-\!\delta_p \mathcal{C}_p^p {||\nabla u||}_2^{p\delta_p} c^{p(1-\delta_p)}.
\end{align*}
Therefore, for any $u \in \overline{A_{k}}$ with $k$ small enough, we have
$$
0<\sup _{\overline{A_{k}}} E_{0} \quad \text { and }u\in \overline{A_{k}} \Longrightarrow E_{0}(u)>0,~~~~P_{0}(u)>0.
$$
If necessary replacing $k$ with a smaller quantity, we also have
$$E_{0}(u) \! \leq \! \frac{a}{2}{||\nabla u||}_2^2\!+\!\frac{b}{4}{||\nabla u||}_2^4<m(c,0),~~~~~~~~\forall u \in \overline{A_{k}}$$
since $m(c,0)>0$ by Lemma \ref{lem6.3}.
\end{proof}

\begin{lemma} \label{lemma6.5}
Let $a\!>\!0$, $b\!>\!0$, $c\!>\!0$, $\frac{14}{3}\!<\!p\!<\!6$ and $\mu\!=\!0$. Then, there exists a positive radial critical point $u_0$ for $E_0|_{S_c}$ at a positive level
$$m_r(c, 0)={m}(c,0):=\inf _{\mathcal{P}_{c,0}} E_0=E_0(u_0) $$
and as a result $u_0$ is the unique ground state of $E_0|_{S_c}$.
\end{lemma}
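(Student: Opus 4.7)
The plan is to produce $u_0$ as a Mountain Pass type critical point for $E_0|_{S_{c,r}}$ via the augmented-functional min-max scheme already exploited in the proof of Theorem~\ref{th1.1}\textbf{-(2)}, and then to invoke the uniqueness theorem of X. Y. Zeng et al.~\cite{XzYz} for normalized solutions to $(1.1)_{\lambda}$--\eqref{eq1.2} with $\mu=0$ and $p\in(2,6)$ in order to conclude that the constructed $u_0$ is in fact \emph{the} unique ground state.

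First, I would set up $\tilde E_0(s,u):=E_0(s\star u)$ on $\mathbb{R}\times S_{c,r}$ and introduce the min-max class
\begin{equation*}
\Gamma:=\Bigl\{\gamma=(\zeta,\beta)\in C([0,1],\mathbb{R}\times S_{c,r}):\ \gamma(0)\in\{0\}\times(\overline{A_k}\cap S_{c,r}),\ \zeta(1)=0,\ E_0(\beta(1))<0\Bigr\},
\end{equation*}
where $k$ is as in Lemma~\ref{lem6.4}. The class is non-empty: for any nonnegative radially decreasing $u\in\overline{A_k}\cap S_{c,r}$, Lemma~\ref{lem6.2} gives $\Psi_u^0(s)\to-\infty$ as $s\to+\infty$, so $\gamma_u(\tau):=(0,(\tau s_1)\star u)$ lies in $\Gamma$ provided $s_1\gg 1$ is chosen large enough that $E_0(s_1\star u)<0$ and $P_0(s_1\star u)<0$. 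For the Mountain Pass geometry, Lemma~\ref{lem6.4} gives $P_0>0$ on $\overline{A_k}$, hence $t_{\beta(0)}>0$ by Lemma~\ref{lem6.2}; the endpoint satisfies $P_0(\beta(1))<0$ by construction, so $t_{\beta(1)}<0$ by Lemma~\ref{lem6.2}\textbf{(4)}. Continuity of $\tau\mapsto t_{\beta(\tau)}$ (Lemma~\ref{lem6.2}\textbf{(3)}) then produces $\tau_\gamma\in(0,1)$ with $t_{\beta(\tau_\gamma)}=\zeta(\tau_\gamma)=0$, i.e.\ $\zeta(\tau_\gamma)\star\beta(\tau_\gamma)=\beta(\tau_\gamma)\in\mathcal{P}_{c,0}=\mathcal{P}_-^{c,0}$. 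This yields $\sigma(c,0):=\inf_\Gamma\max\tilde E_0\ge m(c,0)$; the reverse inequality is obtained by testing on the path $\gamma_u$ with $u\in\mathcal{P}_-^{c,0}\cap S_{c,r}$ and using that $0$ is the unique global maximum of $\Psi_u^0$ in that case.

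Second, I would apply Ghoussoub's min-max principle (Lemma~\ref{lem3.10}) exactly as in the second half of the proof of Theorem~\ref{th1.1} to extract a Palais--Smale sequence $\{(s_n,w_n)\}\subset\mathbb{R}\times S_{c,r}$ for $\tilde E_0|_{\mathbb{R}\times S_{c,r}}$ at level $m(c,0)>0$ with $\{s_n\}$ bounded and $P_0(s_n\star w_n)\to0$. Schwarz-symmetrizing an initial sequence of near-minimizing paths (the estimates in \eqref{alig5.1} carry over to $\mu=0$) I may further assume each $w_n$, and hence the limit, is nonnegative and radially decreasing. Setting $u_n:=s_n\star w_n$, Lemma~\ref{lem2.18} turns $\{u_n\}$ into a Palais--Smale sequence for $E_0|_{S_{c,r}}$ at level $m(c,0)\neq 0$ with $P_0(u_n)\to0$. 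Proposition~\ref{prp4.1} (Sobolev-subcritical case, $\mu=0$, $p\in(\tfrac{14}{3},6)$) then delivers, along a subsequence, $u_n\to u_0$ strongly in $H^1(\mathbb{R}^3)$, with $u_0\in S_c$ a nonnegative radial solution of $(1.1)_{\lambda}$ for some $\lambda<0$; the strong maximum principle upgrades this to $u_0>0$. Since every critical point of $E_0|_{S_c}$ lies in $\mathcal{P}_{c,0}$ by Lemma~\ref{lem2.7} and $E_0(u_0)=m(c,0)=\inf_{\mathcal{P}_{c,0}}E_0$, we conclude that $u_0$ is a ground state.

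The hard part is uniqueness, which is not furnished by the variational construction and is, in fact, a genuinely nontrivial property for Kirchhoff-type equations because of the nonlocal coefficient $a+b\|\nabla u\|_2^2$. I would resolve it by citing \cite{XzYz}, which establishes the uniqueness of normalized solutions to $(1.1)_\lambda$--\eqref{eq1.2} with $a,b>0$, $\mu=0$ and $p\in(2,6)$; combined with the existence just proved, this identifies $u_0$ as \emph{the} unique ground state of $E_0|_{S_c}$ and gives the equality $m_r(c,0)=m(c,0)=E_0(u_0)$ claimed in the statement.
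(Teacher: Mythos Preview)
Your approach is essentially the paper's: build $u_0$ as a Mountain Pass critical point for $E_0|_{S_{c,r}}$ at level $\sigma(c,0)=m_r(c,0)$ using Lemmas~\ref{lem6.1}--\ref{lem6.4}, Ghoussoub's principle and Proposition~\ref{prp4.1}, then cite \cite{XzYz} (the paper also cites \cite{LsCx}) for uniqueness. One step, however, does not go through as written: the identity $m_r(c,0)=m(c,0)$. Your crossing argument only produces a point of $\mathcal{P}_{c,0}\cap S_{c,r}$ (the paths live in $S_{c,r}$), so it yields $\sigma(c,0)\ge m_r(c,0)$; testing with radial $u\in\mathcal{P}_-^{c,0}\cap S_{c,r}$ gives the matching $\sigma(c,0)\le m_r(c,0)$, and nowhere does $m(c,0)$ enter except via the trivial inequality $m(c,0)\le m_r(c,0)$. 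Uniqueness from \cite{XzYz} does not help either: it says there is a single critical point (hence a single ground state), but $m(c,0)=\inf_{\mathcal{P}_{c,0}}E_0$ is an infimum over the whole Pohozaev manifold, not just over critical points, and you have not shown it is attained.

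The paper fills this gap by a short rearrangement argument using Lemma~\ref{lem6.2}: for arbitrary $u\in\mathcal{P}_{c,0}$ put $v=|u|^*\in S_{c,r}$; then $\|\nabla v\|_2\le\|\nabla u\|_2$ and $\|v\|_p=\|u\|_p$, so $E_0(s\star v)\le E_0(s\star u)$ for every $s$, whence
\[
m_r(c,0)\le E_0(t_v\star v)=\max_{s\in\mathbb{R}}E_0(s\star v)\le\max_{s\in\mathbb{R}}E_0(s\star u)=E_0(u),
\]
which gives $m_r(c,0)\le m(c,0)$ and therefore equality. With this patch your proof is complete and coincides with the paper's.
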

\begin{proof}
Utilising Lemmas \ref{lem6.1}-\ref{lem6.4} and by using the same arguments in Section 7 in \cite{NsAe}, we can drive that there exists a positive radial critical point $u_0$ for $E_0|_{S_c}$ at a Mountain Pass level $\sigma(c,0)>0$ characterized by $\sigma(c,0)=\inf _{\mathcal{P}_{c,0}\cap S_{c,r}} E_0$. By rearrangement technique and Lemma \ref{lem6.2}, we have $m_r(c, 0):=\inf _{\mathcal{P}_{c,0}\cap S_{c,r}} E_0=\inf _{\mathcal{P}_{c,0}} E_0$. Following \cite{LsCx,XzYz}, $u_0$ is unique since $u_0>0$.    
\end{proof}

\begin{lemma}\label{lem6.7}
Let $a\!>\!0$, $b\!>\!0$, $c\!>\!0$, $2\!<\!q\!<\!\frac{10}{3}$, $\frac{14}{3}\!<\!p\!<\!6$ and $0\!<\!\mu\!<\!\min\{\mu_{*},\mu^{*}\}$, then
$$
\inf _{{\mathcal{P}_{-}^{c, \mu}} \cap S_{c, r}} E_{\mu}=\inf _{u \in S_{c, r}} \max _{s \in \mathbb{R}} E_{\mu}(s \star u), \quad \text { and } \quad \inf _{{\mathcal{P}_{-}^{c, 0}} \cap S_{c, r}} E_0=\inf _{u \in S_{c, r}} \max _{s \in \mathbb{R}} E_{0}(s \star u),
$$
where $\mu_{*},\mu^{*}$ were defined in \eqref{ConAt1.10}.
\end{lemma}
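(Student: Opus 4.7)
The plan is to establish each equality by the usual two-inequality argument, leveraging the fiber-map analysis already carried out in Lemma \ref{lem3.4} (for $\mu>0$) and Lemma \ref{lem6.2} (for $\mu=0$). The key structural fact is that the scaling $s\star u$ preserves radial symmetry, so $u\in S_{c,r}$ implies $s\star u\in S_{c,r}$ for every $s\in\mathbb{R}$; this means all the critical points of $\Psi_u^\mu$ produced by Lemma \ref{lem3.4} stay inside $S_{c,r}$ whenever $u$ does, and conversely any element of $\mathcal{P}_-^{c,\mu}\cap S_{c,r}$ can be viewed as $0\star v$ for a radial $v$.

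For the first identity, fix $\mu\in(0,\min\{\mu_*,\mu^*\})$. Given any $u\in S_{c,r}$, Lemma \ref{lem3.4}(1)(3) provides $t_u\in\mathbb{R}$ with $t_u\star u\in\mathcal{P}_-^{c,\mu}\cap S_{c,r}$ and $E_\mu(t_u\star u)=\max_{s\in\mathbb{R}}E_\mu(s\star u)$. Hence
\[
\max_{s\in\mathbb{R}}E_\mu(s\star u)=E_\mu(t_u\star u)\geq \inf_{\mathcal{P}_-^{c,\mu}\cap S_{c,r}}E_\mu,
\]
and taking the infimum over $u\in S_{c,r}$ yields the inequality ``$\geq$''. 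Conversely, for any $v\in\mathcal{P}_-^{c,\mu}\cap S_{c,r}$, Corollary \ref{coroll2.1} gives that $s=0$ is a critical point of $\Psi_v^\mu$, and by Lemma \ref{lem3.4}(1) it must coincide with $t_v$ (the two critical points $s_v<t_v$ give $s_v\star v\in\mathcal{P}_+^{c,\mu}$, $t_v\star v\in\mathcal{P}_-^{c,\mu}$, and $\mathcal{P}_+^{c,\mu}\cap\mathcal{P}_-^{c,\mu}=\emptyset$ by Lemma \ref{lem3.3}). Therefore $E_\mu(v)=\Psi_v^\mu(0)=\max_{s\in\mathbb{R}}E_\mu(s\star v)\geq \inf_{u\in S_{c,r}}\max_{s\in\mathbb{R}}E_\mu(s\star u)$, and taking the infimum over $v$ gives the reverse inequality.

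The second identity is analogous but even simpler: by Lemma \ref{lem6.2}, for every $u\in S_c$ the fiber map $\Psi_u^0$ has a unique critical point $t_u$, which is a strict global maximum, and $\mathcal{P}_{c,0}=\mathcal{P}_-^{c,0}$. Hence repeating the two-sided argument above (replacing $t_u$ from Lemma \ref{lem3.4} by $t_u$ from Lemma \ref{lem6.2} and using that $v\in\mathcal{P}_-^{c,0}\cap S_{c,r}$ forces $t_v=0$) yields the equality. There is no serious obstacle here; the only small point to verify cleanly is that $s\star u$ stays in $S_{c,r}$, which is immediate from $(s\star u)(x)=e^{3s/2}u(e^sx)$ and the invariance of the $L^2$-norm under this rescaling.
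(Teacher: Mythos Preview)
Your proof is correct and follows essentially the same two-inequality argument as the paper: use Lemma \ref{lem3.4} to pass from any $u\in S_{c,r}$ to $t_u\star u\in\mathcal{P}_-^{c,\mu}\cap S_{c,r}$ realizing the fiber maximum, and conversely note that any $v\in\mathcal{P}_-^{c,\mu}\cap S_{c,r}$ satisfies $t_v=0$. The only difference is that you spell out a bit more explicitly why $0=t_v$ rather than $0=s_v$, and why the scaling preserves radiality, both of which the paper leaves implicit.
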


\begin{proof}
$\forall u \!\in\! S_{c, r}$, by Lemma \ref{lem3.4}, there exists a unique $t_{u} \!\in \!\R$ such that $t_{u} \!\star\! u \!\in\!\mathcal{P}^{c,\mu}_{-}\!\cap\! S_{c, r}$. Thus, for any $u \!\in\! {\mathcal{P}_{-}^{c, \mu}} \!\cap \!S_{c, r}$, we have $t_{u}\!=\!0$ and
$$
E_{\mu}(u)=\max _{s \in \mathbb{R}} E_{\mu}(s \star u) \geq \inf _{v \in S_{c, r}} \max _{s \in \mathbb{R}} E_{\mu}(s \star v).
$$
On the other hand, if $u \in S_{c, r}$, then $t_{u} \!\star\! u \!\in\!\mathcal{P}^{c,\mu}_{-}\!\cap\! S_{c, r}$, and hence
$$
\max _{s \in \mathbb{R}} E_{\mu}(s \star u)=E_{\mu}\left(t_{u} \star u\right) \geq \inf _{{\mathcal{P}_{-}^{c, \mu}} \!\cap \!S_{c, r}} E_{\mu}.
$$
%
By using Lemma \ref{lem6.2}, we can similarly prove
$$\inf _{{\mathcal{P}_{-}^{c, 0}} \cap S_{c, r}} E_0\!=\!\inf _{u \in S_{c, r}} \max _{s \in \mathbb{R}} E_{0}(s \star u).$$
\end{proof}

\begin{lemma}\label{lem6.8}
Let $a\!>\!0$, $b\!>\!0$, $c\!>\!0$, $2\!<\!q\!<\!\frac{10}{3}$ and  $\frac{14}{3}\!<\!p\!<\!6$. For any $0\!\leq\!\mu_{1}\!<\!\mu_2\!<\!\min\{\mu_{*},\mu^{*}\}$, it holds that
$\sigma\left(c, \mu_{2}\right) \leq \sigma\left(c, \mu_{1}\right) \leq m(c, 0)$, where $\mu_{*},\mu^{*}$ were defined in \eqref{ConAt1.10}.
\end{lemma}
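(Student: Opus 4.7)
The plan is to exploit the characterisation in Lemma~\ref{lem6.7} together with the trivial monotonicity of the map $\mu \mapsto E_{\mu}(u)$ coming from the identity
\[
E_{\mu_2}(u) \;=\; E_{\mu_1}(u)\;-\;\frac{\mu_2-\mu_1}{q}\,\|u\|_q^q,
\]
which implies $E_{\mu_2}(u)\leq E_{\mu_1}(u)$ for every $u\in H^{1}(\mathbb{R}^3)$ whenever $0\leq \mu_1\leq \mu_2$. Since the fiber operation $s\star u$ does not leave $H^{1}$, this pointwise inequality transfers directly to $E_{\mu_2}(s\star u)\leq E_{\mu_1}(s\star u)$ for all $s\in\mathbb{R}$ and all $u\in S_{c,r}$.

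For the first inequality $\sigma(c,\mu_2)\leq \sigma(c,\mu_1)$ with $0<\mu_1<\mu_2<\min\{\mu_*,\mu^*\}$, I would first take the supremum over $s\in\mathbb{R}$ in the above pointwise inequality and then the infimum over $u\in S_{c,r}$, obtaining
\[
\inf_{u\in S_{c,r}}\max_{s\in\mathbb{R}} E_{\mu_2}(s\star u)\;\leq\;\inf_{u\in S_{c,r}}\max_{s\in\mathbb{R}} E_{\mu_1}(s\star u).
\]
By Lemma~\ref{lem6.7}, the left-hand side equals $\sigma(c,\mu_2)$ and the right-hand side equals $\sigma(c,\mu_1)$, which gives the desired inequality. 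The case $\mu_1=0$ reduces to showing $\sigma(c,\mu_2)\leq m(c,0)$, which fits into the same framework since Lemma~\ref{lem6.7} also characterises $m(c,0)$ as $\inf_{u\in S_{c,r}}\max_{s\in\mathbb{R}} E_0(s\star u)$.

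For the second inequality $\sigma(c,\mu_1)\leq m(c,0)$ with $\mu_1>0$, I would apply exactly the same argument comparing $E_{\mu_1}$ to $E_0$: from $E_{\mu_1}(s\star u)\leq E_0(s\star u)$ we infer $\max_{s} E_{\mu_1}(s\star u)\leq \max_{s} E_0(s\star u)$ for each $u\in S_{c,r}$, and taking infimum over $S_{c,r}$ together with the two min-max formulas from Lemma~\ref{lem6.7} yields $\sigma(c,\mu_1)\leq m(c,0)$. When $\mu_1=0$, the inequality becomes the identity $\sigma(c,0)=m(c,0)$, which is already contained in Lemma~\ref{lemma6.5} (where $m_r(c,0)=m(c,0)$ was established via the rearrangement technique and Lemma~\ref{lem6.2}).

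There is no real obstacle here; the lemma is essentially a monotonicity fact. The only care needed is to make sure the min-max formula of Lemma~\ref{lem6.7} is available for every $\mu$ in the relevant range, which requires $\mu<\min\{\mu_*,\mu^*\}$ precisely so that Lemma~\ref{lem3.4} applies and gives the unique strict maximum $t_u$ of $\Psi_u^{\mu}$ on $\mathcal{P}_{-}^{c,\mu}$; this hypothesis is already included in the statement of Lemma~\ref{lem6.8}.
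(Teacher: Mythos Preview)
Your proof is correct and follows essentially the same route as the paper: both rely on the min-max characterisation of Lemma~\ref{lem6.7} together with the pointwise monotonicity $E_{\mu_2}(u)\leq E_{\mu_1}(u)$ for $\mu_1\leq\mu_2$. The one minor difference is in how the inequality $\sigma(c,\mu_2)\leq\sigma(c,\mu_1)$ is obtained: the paper plugs in the already-constructed Mountain Pass critical point $\hat u_{\mu_1}$ as a specific test function (so that $\sigma(c,\mu_2)\leq\max_s E_{\mu_2}(s\star\hat u_{\mu_1})\leq\max_s E_{\mu_1}(s\star\hat u_{\mu_1})=\sigma(c,\mu_1)$), whereas you apply the min-max formula on both sides and compare the two infima directly. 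Your version is slightly more self-contained, since it does not rely on the existence of $\hat u_{\mu_1}$ from Theorem~\ref{th1.1}, but otherwise the arguments are equivalent.
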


\begin{proof}
From (\ref{eq5.3}), we have $\sigma(c, \mu)\!=\!\inf _{{\mathcal{P}_{-}^{c, \mu}} \!\cap \!S_{c, r}} E_{\mu}$. By Lemmas \ref{lemma6.5}-\ref{lem6.7}, we have
\begin{align*}
\sigma\left(c, \mu_{1}\right)=\inf _{u \in S_{c, r}} \max _{s \in \mathbb{R}} E_{\mu_{1}}\left(s \star u\right) \leq \inf _{u \in S_{c, r}} \max _{s \in \mathbb{R}} E_{0}\left(s \star u\right)=m_r(c,0)=m(c,0),\\
\sigma\left(c, \mu_{2}\right) \leq \max _{s \in \mathbb{R}} E_{\mu_{2}}\left(s \star \hat{u}_{\mu_{1}}\right) \leq \max _{s \in \mathbb{R}} E_{\mu_{1}}\left(s \star \hat{u}_{\mu_{1}}\right)=E_{\mu_{1}}\left(\hat{u}_{\mu_{1}}\right)=\sigma\left(c, \mu_{1}\right).
\end{align*}
\end{proof}

\noindent \textbf{Proof of Theorem \ref{th1.1}-(4): convergence of $\tilde{u}_{\mu}$.}  \\
From Lemma \ref{lem3.1}, we know that $R_{0}(c, \mu) \rightarrow 0$ as $\mu \rightarrow 0^{+}$, and hence $||\nabla \tilde{u}_{\mu}||_{2}<R_{0}(c, \mu) \rightarrow 0$ as well. Moreover
\begin{align*}
0>m(c, \mu) \!\geq\! \frac{a}{2}{\| \nabla \tilde{u}_{\mu} \|}_2^2\!+\!\frac{b}{4}{\| \nabla \tilde{u}_{\mu} \|}_2^4\!-\! \frac{ \mathcal{C}_p^{p}}{p}\left\|\nabla \tilde{u}_{\mu}\right\|_{2}^{p\delta_p} c^{p(1-\delta_p)}\!-\!\frac{ \mu \mathcal{C}_q^{q}}{q}\left\|\nabla \tilde{u}_{\mu} \right\|_{2}^{q\delta_q} c^{q(1-\delta_q)} \rightarrow 0,
\end{align*}
which implies that $m(c,\mu)\to 0$. \qed \\

We consider now the behavior of $\hat{u}_{\mu}$.

\noindent \textbf{Proof of Theorem \ref{th1.1}-(5): convergence of ${\hat{u}_{\mu}}$.}  \\
Let us consider $\left\{\hat{u}_{\mu} \!: 0\!<\!\mu\!<\!\overline{\mu}\right\}$, with $\overline{\mu}$ small enough. Since $\hat{u}_{\mu} \!\in\! \mathcal{P}_{c, \mu}$, from Lemma \ref{lem6.8}, we have
\begin{align*}
{m}(c,0) & \geq \sigma\left(c, \mu\right) =E_{\mu}\left({\hat{u}_{\mu}} \right)=(\frac{a}{2}-\frac{a}{p\delta_p})||\nabla {\hat{u}_{\mu}}||_{2}^{2}+(\frac{b}{4}-\frac{b}{p\delta_p})||\nabla {\hat{u}_{\mu}}||_{2}^{4}-\frac{\mu}{q}
\left(1-\frac{q\delta_q}{p\delta_p}\right)\|{\hat{u}_{\mu}}\|_{q}^q \\
& \geq (\frac{a}{2}-\frac{a}{p\delta_p})||\nabla {\hat{u}_{\mu}}||_{2}^{2}+(\frac{b}{4}-\frac{b}{p\delta_p})||\nabla {\hat{u}_{\mu}}||_{2}^{4}-\frac{\mu}{q}
\left(1-\frac{q\delta_q}{p\delta_{p}}\right)
\mathcal{C}_q^{q}c^{q(1-\delta_q)} \left\|\nabla {\hat{u}_{\mu}}\right\|_{2}^{q\delta_q}.
\end{align*}
Hence $\left\{\hat{u}_{\mu}\right\}$ is bounded in $H^1$. Since each $\hat{u}_{\mu}$ is a positive function in $S_{c,r}$, we deduce that up to a subsequence $\hat{u}_{\mu} \rightharpoonup \hat{u}\geq0$ weakly in $H^1(\R^3)$, strongly in $L^{r}$ for $2<r<6$ and a.e. on $\mathbb{R}^{3}$, as $\mu \rightarrow 0^{+}$. Using the fact that $\hat{u}_{\mu}$ solves
\begin{equation}\label{eq6.2}
- \Bigl(a+b  {{{\| {\nabla \hat{u}_{\mu} } \|}_2^2}} \Bigr)\Delta \hat{u}_{\mu}
   =\hat{\lambda}_{\mu} \hat{u}_{\mu}+ {| \hat{u}_{\mu} |^{p - 2}}\hat{u}_{\mu}+\mu {| \hat{u}_{\mu} |^{q - 2}}\hat{u}_{\mu} \text { in } \mathbb{R}^{3}
\end{equation}
for $\hat{\lambda}_{\mu}\!<\!0$ and $P_{\mu}\left(\hat{u}_{\mu}\right)\!=\!0$, we infer that $\hat{\lambda}_{\mu} c^{2}\!=\!\mu(\delta_q-1)||\hat{u}_{\mu}||_{q}^{q}\!+\!(\delta_p-1) ||\hat{u}_{\mu}||_{p}^{p}$. As $\mu\!>\!0$ and $0\!<\!\delta_{q},\delta_p\!<\!1$, we deduce that $\hat{\lambda}_{\mu}$ converges (up to a subsequence) to some $\hat{\lambda} \!\leq\! 0$ satisfying
$$\hat{\lambda} c^{2}= (\delta_p-1) ||\hat{u} ||_{p}^{p},$$
with $\hat{\lambda}=0$ if and only if $\hat{u} \equiv 0$. We claim that $\hat{\lambda}<0$. In fact, $\hat{u}_{\mu} \rightharpoonup \hat{u}$ weakly in $H^1$ implies that $\hat{u}$ is a weak radial solution to
\begin{equation}\label{eq6.3}
-\Bigl( a+bB \Bigr)  \Delta \hat{u}=\hat{\lambda} \hat{u}+|\hat{u}|^{p-2} \hat{u}~~~~\mbox{in}~~~~{\R}^3,
\end{equation}
where $B\!:=\!\mathop {\lim }\limits_{\mu  \to 0^{+}}{{{\| {\nabla \hat{u}_{\mu} } \|}_2^2}}\geq{{\| {\nabla \hat{u}  } \|}_2^2}$. 
By Lemma \ref{lem6.8}, we have
\begin{align*}
-\frac{b}{4}||\nabla {\hat{u}}||_{2}^{4}+\left(\frac{\delta_p}{2}-\frac{1}{p}\right)\|{\hat{u}}\|_p^p &\geq \lim _{\mu \rightarrow 0^{+}}\left[-\frac{b}{4}||\nabla {\hat{u}_{\mu}}||_{2}^{4}+
\left(\frac{\delta_p}{2}-\frac{1}{p}\right)\|{\hat{u}_{\mu}}\|_p^p
-\mu
\left(\frac{1}{q}-\frac{\delta_q}{2}\right)\|{\hat{u}_{\mu}}\|_{q}^q \right] \\
&=\lim _{\mu \rightarrow 0^{+}} E_{\mu}\left(\hat{u}_{\mu}\right)=\lim _{\mu \rightarrow 0^{+}} \sigma(c, \mu) \geq \sigma(c, \overline{\mu})>0,
\end{align*} 
which gives $\left(\frac{\delta_p}{2}-\frac{1}{p}\right)\|{\hat{u}}\|_p^p\!>\!\frac{b}{4}||\nabla {\hat{u}}||_{2}^{4}$. So we have $\hat{u} \!\not \equiv\! 0$, and in turn yields $\hat{\lambda}\!<\!0$ and $B\!>\!0$. The strong maximum principle implies that $\hat{u}\!>\!0$. Test (\ref{eq6.2})-(\ref{eq6.3}) with $\hat{u}_{\mu}\!-\!\hat{u}$, we have
\begin{align*}
( a+bB )||\nabla \left(\hat{u}_{\mu}-\hat{u}\right)||_2^{2}-\hat{\lambda}
|| \hat{u}_{\mu}-\hat{u} ||_2^{2}\to0,
\end{align*}
which implies that $\hat{u}_{\mu} \rightarrow \hat{u}$ in $H^1$ as $\mu \rightarrow 0^{+}$. It results to $m(c, 0)\!\leq\!E_{0}(\hat{u})$. Since $\mathop {\lim }\limits_{\mu  \to 0^{+}}{{{\| {\nabla \hat{u}_{\mu} } \|}_2^2}}={{\| {\nabla \hat{u}  } \|}_2^2}$, we also have
\begin{align*}
E_{0}(\hat{u})=\frac{a}{2}||\nabla {\hat{u}}||_{2}^{2}+\frac{b}{4}||\nabla {\hat{u}}||_{2}^{4}-\frac{1}{p}\|{\hat{u}}\|_p^p
\!=\!\lim _{\mu \rightarrow 0^{+}} E_{\mu}\left(\hat{u}_{\mu}\right)\!=\!\lim _{\mu \rightarrow 0^{+}} \sigma(c, \mu) \!\leq\! m(c, 0).
\end{align*}
Consequently, $E_{0}(\hat{u})\!=\!\mathop {\lim }\limits_{\mu  \to 0^{+}}\sigma(c, \mu)\!=\!{m}(c, 0)$ and $\hat{u}$ is a positive solution to (\ref{eq6.3}). From \cite{Kwon,LsCx,XzYz}, we know that (\ref{eq6.3}) has a unique positive solution $u_0$. Thus $\hat{u}=u_0$.  \qed \\

\noindent \textbf{Proof of Theorem \ref{th1.3}-(3):} \\
From Lemma \ref{lem3.1}, we know that $R_{0}(c, \mu) \rightarrow 0$ as $\mu \rightarrow 0^{+}$, and hence $||\nabla \tilde{u}_{\mu}||_{2}<R_{0}(c, \mu) \rightarrow 0$ as well. Moreover
\begin{align*}
0>m(c, \mu)=E_{\mu}\left(\tilde{u}_{\mu}\right)
\!\geq\! \frac{a}{2}{\| \nabla \tilde{u}_{\mu} \|}_2^2\!+\!\frac{b}{4}{\| \nabla \tilde{u}_{\mu} \|}_2^4\!-\! \frac{ \mathcal{S}^{-3}}{6} \left\|\nabla \tilde{u}_{\mu}\right\|_{2}^6\!-\!\frac{ \mu \mathcal{C}_q^{q}}{q}\left\|\nabla \tilde{u}_{\mu} \right\|_{2}^{q\delta_q} c^{q(1-\delta_q)} \rightarrow 0,
\end{align*}
which implies that $m(c,\mu)\to 0$. \qed \\

\section{Purely $L^2$-supercritical case}
In this Section, we always assume that $\frac{14}{3}\!<\!q\!<\!p\!\leq\!6$. Under this setting, we obtain one critical point for $E_{\mu}|_{S_c}$, since $E_{\mu}|_{S_c}$ admits a Mountain Pass geometry. Subsection 5.1 is devoted to locating the exact position of some critical points to $E_{\mu}|_{S_c}$. In Subsection 5.2, we prove Theorems \ref{tTH1.3}-\ref{tTH1.4}.

\subsection{The exact location of some critical points to $E_{\mu}|_{S_c}$ for $\frac{14}{3}\!<\!q\!<\!p\!\leq\!6$}
In this Subsection, we study the structure of $\mathcal{P}_{c, \mu}$ and $E_{\mu}$ to locate the position of some critical points to $E_{\mu}|_{S_c}$. Since $\frac{14}{3}<q<p\leq6$, we have $4\!<\!q\delta_q\!<\!p\delta_p$. Similar to the proof of Lemmas \ref{lem3.3}-\ref{lemma1.3}, we can prove that $\mathcal{P}_{c, \mu}$ is a natural constraint and $\mathcal{P}_{0}^{c, \mu}=\emptyset$. Furthermore, we have
%
%

\begin{lemma}  \label{LeMal2.8}
Let $\tilde{a},\tilde{b},\tilde{c},\tilde{d},\tilde{p},\tilde{q} \in(0,+\infty)$ and   $f(t):=\tilde{a}t^2+\tilde{b}t^4-\tilde{c}t^{\tilde{p}}-\tilde{d}t^{\tilde{q}}$ for $t\geq0$. If $\tilde{p},\tilde{q} \in(4,+\infty)$, $f(t)$ has a unique maximum point at a positive level on $[0,+\infty)$.
\end{lemma}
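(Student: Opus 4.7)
The plan is to follow the same pattern as in the proof of Lemma \ref{LemA2.7}, but the analysis is considerably simpler because both exponents now exceed $4$, which will force the critical point structure of $f$ to be monotone rather than bimodal. Since $\tilde{a},\tilde{b}>0$, we have $f(0)=0$ and $f(t)>0$ for $t$ small (the quadratic term dominates), while $f(t)\to -\infty$ as $t\to +\infty$ because $\tilde{p},\tilde{q}>4$. Hence $f$ attains a global maximum at some positive level on $(0,+\infty)$, and the only remaining issue is uniqueness.

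To prove uniqueness, I would factor $f'(t)=t\,g(t)$ with $g(t):=2\tilde{a}+4\tilde{b}t^{2}-\tilde{p}\tilde{c}t^{\tilde{p}-2}-\tilde{q}\tilde{d}t^{\tilde{q}-2}$, and further write $g'(t)=t\,w(t)$ with $w(t):=8\tilde{b}-\tilde{p}(\tilde{p}-2)\tilde{c}t^{\tilde{p}-4}-\tilde{q}(\tilde{q}-2)\tilde{d}t^{\tilde{q}-4}$. The crucial observation, and the place where the hypothesis $\tilde{p},\tilde{q}>4$ does all the work, is that
\[
w'(t)=-\tilde{p}(\tilde{p}-2)(\tilde{p}-4)\tilde{c}\,t^{\tilde{p}-5}-\tilde{q}(\tilde{q}-2)(\tilde{q}-4)\tilde{d}\,t^{\tilde{q}-5}<0\quad\text{for all }t>0,
\]
so $w$ is strictly decreasing on $(0,+\infty)$ with $w(0^{+})=8\tilde{b}>0$ and $w(+\infty)=-\infty$. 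Consequently $w$ has a unique zero $t^{*}>0$, which makes $g$ strictly increasing on $(0,t^{*})$ and strictly decreasing on $(t^{*},+\infty)$.

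Combining this with $g(0)=2\tilde{a}>0$ and $g(+\infty)=-\infty$, $g$ has exactly one zero $t^{**}\in(t^{*},+\infty)$, positive on $(0,t^{**})$ and negative on $(t^{**},+\infty)$. Therefore $f'(t)=t\,g(t)$ is positive on $(0,t^{**})$ and negative on $(t^{**},+\infty)$, showing $f$ is strictly increasing on $[0,t^{**}]$ and strictly decreasing on $[t^{**},+\infty)$. This gives both existence and uniqueness of a maximum point at $t^{**}$, with $f(t^{**})>f(0)=0$, as required.

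I expect the main (and essentially only) technical point to be the sign of $w'(t)$: once $w'(t)<0$ for all $t>0$ is in hand, the cascade $w\mapsto g\mapsto f'$ is just routine monotonicity bookkeeping. In contrast to Lemma \ref{LemA2.7}, where $\tilde{q}<2$ introduced a second sign change and forced the quantitative smallness hypothesis on the coefficients, here no such numerical threshold arises, which is consistent with the lemma requiring no side condition beyond $\tilde{p},\tilde{q}>4$.
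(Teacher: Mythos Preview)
Your proof is correct and follows essentially the same approach as the paper's own proof: the same factorizations $f'(t)=t\,g(t)$ and $g'(t)=t\,w(t)$, the same computation showing $w'(t)<0$ from $\tilde{p},\tilde{q}>4$, and the same cascade of monotonicity arguments yielding a unique zero of $g$ and hence a unique maximum of $f$ at a positive level.
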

\begin{proof}
Direct calculations give
\begin{align*}
&f'(t)=tg(t) ~~~~\mbox{for}~~~~ g(t)=2\tilde{a}+4\tilde{b}t^2-{\tilde{p}}\tilde{c}t^{\tilde{p}-2}
-{\tilde{q}}\tilde{d}t^{\tilde{q}-2};  \\
&g'(t)=tw(t) ~~~~\mbox{for}~~~~ w(t)=8\tilde{b}-{\tilde{p}}(\tilde{p}-2)\tilde{c}t^{\tilde{p}-4}
-{\tilde{q}}(\tilde{q}-2)\tilde{d}t^{\tilde{q}-4};\\
&w'(t)=-{\tilde{p}}(\tilde{p}-2)(\tilde{p}-4)\tilde{c}t^{\tilde{p}-5}
-{\tilde{q}}(\tilde{q}-2)(\tilde{q}-4)\tilde{d}t^{\tilde{q}-5}.
\end{align*}
Since $w'(t)<0$ for $t>0$, we know that $w(t) \searrow$ on $[0,+\infty)$. The fact that  $w(0)>0$ and $w(+\infty)=-\infty$ imply that there exists unique $t^*>0$ such that $w(t^*)=0$, $w(t)>0$ if $t\in(0,t^*)$ and $w(t)<0$ if $t\in(t^*,+\infty)$. Consequently, $g(t)\nearrow$ on $[0,t^*)$ and $ \searrow$ on $(t^*,+\infty)$. The fact that  $g(0)>0$ and $g(+\infty)=-\infty$ imply that there exists unique $\bar{t}>t^*$ such that $g(\bar{t})=0$, $g(t)>0$ if $t\in(0,\bar{t})$ and $g(t)<0$ if $t\in(\bar{t},+\infty)$. We get $f'(t)>0$ if $t\in(0,\bar{t})$ and $f'(t)<0$ if $t\in(\bar{t},+\infty)$, which implies that $f(t)\nearrow$ on $[0,\bar{t})$ and $\searrow$ on $(\bar{t},+\infty)$. Since $f(0)=0$, then $f(t)$ has a unique maximum point at $\bar{t}$ and $f(\bar{t})>0$.
\end{proof}

\begin{lemma}\label{LeM3.4}
Let $a\!>\!0$, $b\!>\!0$, $c\!>\!0$, $\frac{14}{3}\!<\!q\!<\!p\!\leq\!6$ and $\mu\!>\!0$. For every $u \!\in\! S_{c}$, $\Psi_{u}^{\mu}$ has a unique critical point $t_{u} \!\in\! \mathbb{R}$, which is a strict maximum point at a positive level. Moreover:\\
\noindent $(1)$ $\mathcal{P}_{c,\mu}=\mathcal{P}_{-}^{c,\mu}$.\\
\noindent $(2)$ $\Psi_{u}^{\mu}$ is strictly decreasing on $\left(t_{u},+\infty\right)$, and $t_u<0$ implies $P_{\mu}(u)<0$.\\
$(3)$ The maps $u \in S_c \mapsto t_{u} \in \mathbb{R}$ are of class $C^1$.\\
$(4)$ If $P_{\mu}(u)<0$, then $t_{u}<0$.
\end{lemma}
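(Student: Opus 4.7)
The plan is to reduce everything to the profile analysis of Lemma \ref{LeMal2.8} and then translate back to the fiber map $\Psi_u^\mu$. Fix $u \in S_c$ and introduce $u_t(x) := t^{3/2} u(tx)$, so $u_t \in S_c$ for $t>0$, and set
\begin{equation*}
f(t) := E_\mu(u_t) = \tilde a\, t^2 + \tilde b\, t^4 - \tilde c\, t^{p\delta_p} - \tilde d\, t^{q\delta_q}
\end{equation*}
with $\tilde a = \tfrac{a}{2}\|\nabla u\|_2^2$, $\tilde b = \tfrac{b}{4}\|\nabla u\|_2^4$, $\tilde c = \tfrac{1}{p}\|u\|_p^p$, $\tilde d = \tfrac{\mu}{q}\|u\|_q^q$. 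Since $\tfrac{14}{3} < q < p \le 6$, we have $4 < q\delta_q < p\delta_p$, so both exponents $\tilde p = p\delta_p$ and $\tilde q = q\delta_q$ lie in $(4,+\infty)$. Hence Lemma \ref{LeMal2.8} applies and yields a unique $\bar t > 0$ at which $f$ achieves a strict global maximum with $f(\bar t) > 0$, $f$ being strictly increasing on $[0,\bar t)$ and strictly decreasing on $(\bar t,+\infty)$.

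Setting $t_u := \ln \bar t$ and observing $\Psi_u^\mu(s) = f(e^s)$, the strict monotonicity of $s \mapsto e^s$ transports all structural information to $\Psi_u^\mu$: it has a unique critical point $t_u$, which is a strict global maximum at positive level, and $\Psi_u^\mu$ is strictly decreasing on $(t_u,+\infty)$. For part (1), by Corollary \ref{coroll2.1} we have $t_u \star u \in \mathcal{P}_{c,\mu}$; since $\mathcal{P}_0^{c,\mu}=\emptyset$ (the analogue of Lemma \ref{lem3.3} in this range, already invoked above), and since $(\Psi_u^\mu)''(t_u) \le 0$ by maximality, the inequality is strict, giving $t_u \star u \in \mathcal{P}_-^{c,\mu}$. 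As every element of $\mathcal{P}_{c,\mu}$ is of the form $t_u \star u$ for some $u \in S_c$ with $t_u = 0$, this proves $\mathcal{P}_{c,\mu} = \mathcal{P}_-^{c,\mu}$.

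For parts (2) and (4), recall from \eqref{equa2.9} that $(\Psi_u^\mu)'(0) = P_\mu(u)$. If $t_u < 0$, then $0 > t_u$ lies in the strictly decreasing branch of $\Psi_u^\mu$, so $(\Psi_u^\mu)'(0) < 0$, i.e.\ $P_\mu(u) < 0$; conversely, if $P_\mu(u) < 0$, then $(\Psi_u^\mu)'(0) < 0$ forces $0$ to lie past the unique maximum, hence $t_u < 0$. For part (3), the $C^1$ dependence $u \mapsto t_u$ follows from the implicit function theorem applied to $\Phi(s,u) := (\Psi_u^\mu)'(s)$, noting that $\Phi(t_u,u) = 0$ and $\partial_s \Phi(t_u,u) = (\Psi_u^\mu)''(t_u) < 0$, exactly as in the corresponding step of Lemma \ref{lem3.4}.

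The only nontrivial ingredient is the profile analysis for $f$, which is already handled by Lemma \ref{LeMal2.8}; no condition on $\mu$ is needed here because both nonlinearities are $L^2$-supercritical and work in the same direction, so no convex-concave geometry has to be distinguished. I do not foresee a real obstacle beyond carefully invoking $\mathcal{P}_0^{c,\mu}=\emptyset$ to upgrade the second-order information from ``$\le 0$'' to ``$<0$''.
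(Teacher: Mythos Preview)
Your proof is correct and follows essentially the same route as the paper: both reduce the question to the polynomial profile analysis of Lemma \ref{LeMal2.8} and then read off the conclusions for $\Psi_u^\mu$ via the change of variable $s\mapsto e^s$, invoking $\mathcal{P}_0^{c,\mu}=\emptyset$ and the implicit function theorem exactly as in Lemma \ref{lem3.4}. The paper's own proof is terse (it just cites Lemma \ref{LeMal2.8} and then points to \cite{NSaE}, Lemma 6.1), while you have spelled out the details, but there is no substantive difference in approach.
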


\begin{proof}
By using Lemma \ref{LeMal2.8}, we derive that $\Psi_{u}^{\mu}$ has a unique maximum point at a positive level. The rest of the proof is similar to that of Lemma 6.1 in \cite{NSaE}.
\end{proof}

\begin{lemma}\label{lEM7.3}
Let $a\!>\!0$, $b\!>\!0$, $c\!>\!0$, $\frac{14}{3}\!<\!q\!<\!p\!\leq\!6$ and $\mu\!>\!0$. Then, we have $$ m(c,\mu) :=\inf _{u \in \mathcal{P}_{c,\mu}} E_{\mu}(u)>0.$$
\end{lemma}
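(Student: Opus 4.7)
The plan is to combine two ingredients: a uniform lower bound on $\|\nabla u\|_2$ for $u\in\mathcal{P}_{c,\mu}$, and a rewriting of $E_\mu$ on $\mathcal{P}_{c,\mu}$ that manifestly displays positivity thanks to the strict inequalities $4<q\delta_q<p\delta_p$ characteristic of the purely $L^2$-supercritical regime.

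First, I would exploit the Pohozaev identity together with the Gagliardo-Nirenberg inequality of Lemma \ref{lem2.2} (and, if $p=6$, the Sobolev inequality of Lemma \ref{lem2.1} in the form $\mathcal{C}_p=\mathcal{S}^{-1/2}$). For any $u\in\mathcal{P}_{c,\mu}$,
$$
a\|\nabla u\|_2^2+b\|\nabla u\|_2^4=\mu\delta_q\|u\|_q^q+\delta_p\|u\|_p^p\le \mu\delta_q\mathcal{C}_q^{q}c^{q(1-\delta_q)}\|\nabla u\|_2^{q\delta_q}+\delta_p\mathcal{C}_p^{p}c^{p(1-\delta_p)}\|\nabla u\|_2^{p\delta_p}.
$$
Dividing by $\|\nabla u\|_2^2$ and using $q\delta_q-2>2$ and $p\delta_p-2>2$, the right-hand side vanishes as $\|\nabla u\|_2\to 0^+$, while the left-hand side stays $\ge a>0$. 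This forces the existence of a constant $C_0=C_0(a,b,c,\mu,p,q)>0$ such that $\|\nabla u\|_2\ge C_0$ for every $u\in\mathcal{P}_{c,\mu}$.

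Next, I would use $P_\mu(u)=0$ to eliminate the $\|u\|_q^q$ term from $E_\mu(u)$. Solving $\mu\delta_q\|u\|_q^q=a\|\nabla u\|_2^2+b\|\nabla u\|_2^4-\delta_p\|u\|_p^p$ and substituting gives, after grouping,
$$
E_\mu(u)=a\Bigl(\tfrac{1}{2}-\tfrac{1}{q\delta_q}\Bigr)\|\nabla u\|_2^2+b\Bigl(\tfrac{1}{4}-\tfrac{1}{q\delta_q}\Bigr)\|\nabla u\|_2^4+\Bigl(\tfrac{\delta_p}{q\delta_q}-\tfrac{1}{p}\Bigr)\|u\|_p^p.
$$
Because $q\delta_q>4$, both coefficients $\frac{1}{2}-\frac{1}{q\delta_q}$ and $\frac{1}{4}-\frac{1}{q\delta_q}$ are strictly positive; and because $p\delta_p>q\delta_q$, the coefficient $\frac{\delta_p}{q\delta_q}-\frac{1}{p}=\frac{p\delta_p-q\delta_q}{pq\delta_q}$ of $\|u\|_p^p$ is also strictly positive. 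In particular, dropping the last two nonnegative terms,
$$
E_\mu(u)\ge a\Bigl(\tfrac{1}{2}-\tfrac{1}{q\delta_q}\Bigr)\|\nabla u\|_2^2\ge a\Bigl(\tfrac{1}{2}-\tfrac{1}{q\delta_q}\Bigr)C_0^{2}>0,
$$
and taking the infimum over $\mathcal{P}_{c,\mu}$ yields $m(c,\mu)>0$.

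The only mildly delicate point is the step producing $\|\nabla u\|_2\ge C_0$: it relies in an essential way on the assumption $q\delta_q>4$ (through $q\delta_q-2>2$), which is exactly the purely $L^2$-supercritical hypothesis. In the mixed critical case treated earlier in the paper this step would fail, explaining why there the analogous constrained infimum is \emph{negative}. Apart from this, the argument is a one-shot algebraic rearrangement and does not require any compactness or min-max machinery.
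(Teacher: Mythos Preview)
Your proof is correct and follows essentially the same two-step scheme as the paper: a uniform lower bound on $\|\nabla u\|_2$ over $\mathcal{P}_{c,\mu}$ via the Pohozaev identity combined with Gagliardo--Nirenberg, followed by an algebraic rewriting of $E_\mu$ on $\mathcal{P}_{c,\mu}$ that exhibits strict positivity. The paper simply refers back to Lemma~\ref{lem6.3}; the only cosmetic difference is that the paper's model computation eliminates the $\|u\|_p^p$ term (yielding $E_\mu(u)=a(\tfrac12-\tfrac{1}{p\delta_p})\|\nabla u\|_2^2+b(\tfrac14-\tfrac{1}{p\delta_p})\|\nabla u\|_2^4-\tfrac{\mu}{q}(1-\tfrac{q\delta_q}{p\delta_p})\|u\|_q^q$, so one must absorb the last term back via Gagliardo--Nirenberg), whereas you eliminate the $\|u\|_q^q$ term and obtain three manifestly positive coefficients directly. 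Your choice is slightly cleaner, but the underlying idea is identical.
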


\begin{proof}
The proof is similar to that of Lemma \ref{lem6.3}.
\end{proof}

\begin{lemma}\label{lEM7.4}
Let $a\!>\!0$, $b\!>\!0$, $c\!>\!0$, $\frac{14}{3}\!<\!q\!<\!p\!\leq\!6$ and $\mu\!>\!0$. Then, there exists $k>0$ sufficiently small such that
$$
0<\sup_{\overline{A_{k}}} E_{\mu}<m(c,\mu) \quad \text{and} \quad u\in \overline{A_{k}} \Longrightarrow E_{\mu}(u)>0,~~~~P_{\mu}(u)>0,$$
where $A_{k} :=\left\{u \in S_c :||\nabla u||_{2}^{2} < k\right\}$.
\end{lemma}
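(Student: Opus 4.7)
The plan is to mirror the proof of Lemma~\ref{lem6.4}, adapted to account for the extra $q$-term and the possibility $p=6$. The key structural observation is that since $\tfrac{14}{3}<q<p\le 6$, both exponents satisfy $q\delta_q,p\delta_p>4$, so the two negative contributions to $E_\mu$ and $P_\mu$ are of strictly higher order in $\|\nabla u\|_2$ than the leading positive quadratic and quartic terms.

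First, I would apply the Gagliardo--Nirenberg inequality (Lemma~\ref{lem2.2}) to the $q$-term, and either Gagliardo--Nirenberg (when $p<6$) or the Sobolev inequality (Lemma~\ref{lem2.1}, when $p=6$) to the $p$-term, to obtain, for every $u\in S_c$,
\begin{equation*}
E_\mu(u)\ \geq\ \frac{a}{2}\|\nabla u\|_2^2+\frac{b}{4}\|\nabla u\|_2^4-\alpha\,\|\nabla u\|_2^{p\delta_p}-\beta\,\|\nabla u\|_2^{q\delta_q},
\end{equation*}
\begin{equation*}
P_\mu(u)\ \geq\ a\,\|\nabla u\|_2^2+b\,\|\nabla u\|_2^4-\gamma\,\|\nabla u\|_2^{p\delta_p}-\delta\,\|\nabla u\|_2^{q\delta_q},
\end{equation*}
for positive constants $\alpha,\beta,\gamma,\delta$ depending on $a,b,c,\mu,p,q$ (in the Sobolev-critical case $p=6$ the constant in front of $\|\nabla u\|_2^{p\delta_p}=\|\nabla u\|_2^6$ simply involves $\mathcal{S}^{-3}$ instead of $\mathcal{C}_p^p$, cf.\ $\mathcal{C}_{p}=\mathcal{S}^{-1/2}$). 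Factoring $\|\nabla u\|_2^2$ out of each right-hand side produces a bracketed factor that is continuous in $\|\nabla u\|_2$ and equal to $\tfrac{a}{2}>0$ (respectively $a>0$) at the origin. Consequently, for all sufficiently small $k>0$ and every $u\in\overline{A_k}$, both $E_\mu(u)>0$ and $P_\mu(u)>0$.

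For the upper bound on $\sup_{\overline{A_k}}E_\mu$, I would simply drop the nonpositive $L^p$ and $L^q$ terms to obtain
\begin{equation*}
E_\mu(u)\ \leq\ \frac{a}{2}\|\nabla u\|_2^2+\frac{b}{4}\|\nabla u\|_2^4\ \leq\ \frac{a}{2}k+\frac{b}{4}k^2,\qquad\forall\,u\in\overline{A_k}.
\end{equation*}
By Lemma~\ref{lEM7.3} we already know $m(c,\mu)>0$, so after possibly shrinking $k$ once more we ensure $\tfrac{a}{2}k+\tfrac{b}{4}k^2<m(c,\mu)$. Since $\overline{A_k}\cap S_c$ is nonempty (for any $u\in S_c$, $s\star u\in\overline{A_k}$ when $s$ is sufficiently negative, by \eqref{eq1.12}), combining these estimates with the lower bound above yields the full chain $0<\sup_{\overline{A_k}}E_\mu<m(c,\mu)$. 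No serious obstacle is anticipated; the only cosmetic care needed is the uniform treatment of the Sobolev-critical case $p=6$, which leaves the structural inequalities, and hence the argument, essentially unchanged.
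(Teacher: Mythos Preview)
Your proposal is correct and follows essentially the same approach as the paper, which simply refers back to the proof of Lemma~\ref{lem6.4}: use Gagliardo--Nirenberg/Sobolev to bound the negative terms by higher powers of $\|\nabla u\|_2$ (since $q\delta_q,p\delta_p>4$), obtain positivity of $E_\mu$ and $P_\mu$ on $\overline{A_k}$ for $k$ small, then use the crude upper bound $E_\mu(u)\le \tfrac{a}{2}\|\nabla u\|_2^2+\tfrac{b}{4}\|\nabla u\|_2^4$ together with $m(c,\mu)>0$ from Lemma~\ref{lEM7.3}. The only cosmetic difference is that the paper's version of Lemma~\ref{lem6.4} drops the $a$-term and compares the $b$-quartic against the $p$-power, whereas you keep both positive terms and factor out $\|\nabla u\|_2^2$; both arguments are equally valid here.
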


\begin{proof}
The proof is similar to that of Lemma \ref{lem6.4}.
\end{proof}

To apply Proposition \ref{prp4.2} and recover compactness when $p=6$, we need an estimate from above on
 $$m_{r}(c, \mu):=\inf _{u \in \mathcal{P}_{c,\mu} \cap S_{c,r} } E_{\mu}(u).$$

\begin{lemma}\label{lemma7.5}
Let $a\!>\!0$, $b\!>\!0$, $c\!>\!0$, $\frac{14}{3}\!<\!q\!<\!6$, $p\!=\!6$ and $\mu\!>\!0$. Then $m_{r}(c, \mu)\!<\!\frac{a\mathcal{S}\Lambda}{3}\!+\!\frac{b\mathcal{S}^2{\Lambda}^2}{12}$, where $\Lambda=\frac{b{\mathcal{S}}^2}{2}
+\sqrt{a\mathcal{S}+\frac{b^2{\mathcal{S}}^4}{4}}$.
\end{lemma}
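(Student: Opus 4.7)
The plan is to construct a concentrating radial test function from truncated Aubin--Talenti bubbles and, via a refined asymptotic expansion of the Pohozaev fiber $\Psi^\mu_{v_\epsilon}$, to show that its maximum lies strictly below $\frac{a\mathcal{S}\Lambda}{3}+\frac{b\mathcal{S}^2\Lambda^2}{12}$ for some small $\epsilon>0$; since by Lemma \ref{LeM3.4} the maximum is realized at $t_\epsilon\star v_\epsilon\in \mathcal{P}_{c,\mu}\cap S_{c,r}$, this will bound $m_r(c,\mu)$ from above.

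First I would fix a radial cutoff $\eta\in C_c^\infty(\mathbb{R}^3)$ with $\eta\equiv 1$ on $B_1$, $\mathrm{supp}\,\eta\subset B_2$, and set
$$u_\epsilon(x)=\eta(x)\frac{(3\epsilon)^{1/4}}{(\epsilon+|x|^2)^{1/2}},\qquad v_\epsilon:=\frac{c\,u_\epsilon}{\|u_\epsilon\|_2}\in S_{c,r}.$$
Standard dimension-three Brezis--Nirenberg-type expansions yield
$$\|\nabla u_\epsilon\|_2^2=\mathcal{S}^{3/2}+O(\epsilon^{1/2}),\quad \|u_\epsilon\|_6^6=\mathcal{S}^{3/2}+O(\epsilon^{3/2}),\quad \|u_\epsilon\|_2^2=c_0\epsilon^{1/2}(1+o(1))$$
with $c_0>0$, together with $\|u_\epsilon\|_q^q=K_q\epsilon^{(6-q)/4}(1+o(1))$ for some $K_q>0$ (using $q>14/3>3$ so that the relevant integral converges). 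Scale-invariance of the Sobolev ratio gives the key identity
$$R_\epsilon:=\frac{\|\nabla v_\epsilon\|_2^6}{\|v_\epsilon\|_6^6}=\frac{\|\nabla u_\epsilon\|_2^6}{\|u_\epsilon\|_6^6}=\mathcal{S}^3+O(\epsilon^{1/2}).$$

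Next, by Lemma \ref{LeM3.4} there is a unique $t_\epsilon\in\mathbb{R}$ with $t_\epsilon\star v_\epsilon\in\mathcal{P}_{c,\mu}\cap S_{c,r}$, realizing $\max_s \Psi^\mu_{v_\epsilon}(s)$. Writing $\tau_\epsilon=e^{2t_\epsilon}$ and $A=\|\nabla v_\epsilon\|_2^2$, $B=\|v_\epsilon\|_6^6$, $D=\|v_\epsilon\|_q^q$, the stationarity $(\Psi^\mu_{v_\epsilon})'(t_\epsilon)=0$ reads
$$aA\tau_\epsilon+bA^2\tau_\epsilon^2=\mu\delta_q D\tau_\epsilon^{q\delta_q/2}+B\tau_\epsilon^3.$$
Eliminating $B\tau_\epsilon^3$ from $E_\mu(t_\epsilon\star v_\epsilon)$ through this identity and using $\frac{1}{q}-\frac{\delta_q}{6}=\frac{6-q}{4q}$ produces the representation
$$m_r(c,\mu)\leq E_\mu(t_\epsilon\star v_\epsilon)=\frac{a}{3}(A\tau_\epsilon)+\frac{b}{12}(A\tau_\epsilon)^2-\mu\,\frac{6-q}{4q}\,D\tau_\epsilon^{q\delta_q/2}.$$

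The analytic core of the proof will be the expansion of this right-hand side. Guided by $A=\Theta(\epsilon^{-1/2})$ and $B=\Theta(\epsilon^{-3/2})$ with $A^3/B\to\mathcal{S}^3$, I would set $\tau_\epsilon=\kappa_\epsilon\epsilon^{1/2}$ so that $A\tau_\epsilon=O(1)$; substituting into the stationarity identity, the $\mu$-term scales like $\epsilon^{(6-q)/8}$ and is therefore asymptotically negligible, so to leading order $\kappa_\epsilon\to\kappa_0$ with $B_0\kappa_0^2-bA_0^2\kappa_0-aA_0=0$. This forces $A_0\kappa_0=\mathcal{S}\Lambda$, because $\mathcal{S}\Lambda$ is the positive root of $\alpha^2=\mathcal{S}^3(b\alpha+a)$. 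Combining with $\tau_\epsilon\leq\tau_0$ (since the $\mu$-term strictly reduces the stationary $\tau$) and $R_\epsilon=\mathcal{S}^3+O(\epsilon^{1/2})$, one deduces $A\tau_\epsilon\leq \mathcal{S}\Lambda+O(\epsilon^{1/2})$, while a direct computation using $\tau_\epsilon=\Theta(\epsilon^{1/2})$ gives $D\tau_\epsilon^{q\delta_q/2}=\mathcal{K}\,\epsilon^{(6-q)/8}(1+o(1))$ with an explicit $\mathcal{K}>0$. Plugging into the previous display yields
$$E_\mu(t_\epsilon\star v_\epsilon)\leq\frac{a\mathcal{S}\Lambda}{3}+\frac{b\mathcal{S}^2\Lambda^2}{12}+O(\epsilon^{1/2})-\mu\,\frac{6-q}{4q}\,\mathcal{K}\,\epsilon^{(6-q)/8}(1+o(1)).$$
Since $q>14/3$ forces $(6-q)/8<1/6<1/2$, the $O(\epsilon^{1/2})$ remainder is negligible compared with $\mu\mathcal{K}\epsilon^{(6-q)/8}$, so this quantity is strictly below $\frac{a\mathcal{S}\Lambda}{3}+\frac{b\mathcal{S}^2\Lambda^2}{12}$ once $\epsilon$ is small enough, which is the claim.

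The hard part will be to make the expansion of $\tau_\epsilon$ rigorous: one must control $\tau_\epsilon/\sqrt{\epsilon}$ uniformly from above and below, verify that the coefficient $\mathcal{K}$ is strictly positive (this uses the full lower bound $\tau_\epsilon\geq c_*\sqrt{\epsilon}$), and handle the nonpolynomial stationarity identity, whose nonlinearity $\tau^{q\delta_q/2}$ interacts with the Kirchhoff cubic $B\tau^3$. Compared with Soave's NLS analysis in which $b=0$ reduces the limit to the quadratic $\tau_0^2 B=aA$, the Kirchhoff contribution turns the limit equation into $\alpha^2-bR\alpha-aR=0$; the monotonicity of its positive root $\alpha(R)$ together with $R_\epsilon\to\mathcal{S}^3$ is precisely what pins down $A\tau_\epsilon\to\mathcal{S}\Lambda$ and makes the threshold $\frac{a\mathcal{S}\Lambda}{3}+\frac{b\mathcal{S}^2\Lambda^2}{12}$ appear as a natural limit.
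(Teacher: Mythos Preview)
Your approach is essentially the same as the paper's: truncated Aubin--Talenti bubbles normalized into $S_{c,r}$, an upper bound on the ``$\mu=0$'' part of the fiber by $\frac{a\mathcal{S}\Lambda}{3}+\frac{b\mathcal{S}^2\Lambda^2}{12}+O(\epsilon^{1/2})$, a two-sided control on the location $\tau_\epsilon$ of the maximizer, and a comparison of exponents showing the $\mu$-correction $\epsilon^{(6-q)/8}$ dominates the error. The only organizational difference is that you rewrite the critical value via the Pohozaev identity as $E_\mu=\frac{a}{3}(A\tau_\epsilon)+\frac{b}{12}(A\tau_\epsilon)^2-\mu\frac{6-q}{4q}D\tau_\epsilon^{q\delta_q/2}$ and bound $A\tau_\epsilon\le A\tau_0=\alpha(R_\epsilon)$ through the monotone root $\alpha(R)$ of $\alpha^2=R(b\alpha+a)$, whereas the paper instead bounds $\Psi^\mu_{v_\varepsilon}(t_{v_\varepsilon,\mu})\le\max_s\Psi^0_{v_\varepsilon}(s)-\frac{\mu}{q}e^{q\delta_q t_{v_\varepsilon,\mu}}\|v_\varepsilon\|_q^q$ and computes $\max_s\Psi^0_{v_\varepsilon}(s)$ directly; both routes need and use the same upper bound $\tau_\epsilon\le\tau_0$ and the same lower bound $\tau_\epsilon\ge c_*\sqrt{\epsilon}$ (derived from the stationarity identity after inserting the upper bound), so the ``hard part'' you flag is precisely the content of the paper's estimates \eqref{lign7.1}--\eqref{ebounD1}.
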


\begin{proof}
By Theorem 1.42 of \cite{Mwlm}, we know that $\mathcal{S}=\inf _{u \in {{D}}^{1,2}(\mathbb{R}^{3})\setminus \{0\} }    \frac{\left\|\nabla u\right\|_{2}^{2}}{||u||_{6}^{2}}$ is attained by
\begin{align}  \label{algn5.11}
U_{\varepsilon}(x):=3^{\frac{1}{4}}\left(\frac{\varepsilon}
{\varepsilon^{2}+|x|^{2}}\right)^{\frac{1}{2}},~~~~\forall\varepsilon>0.
\end{align}
Furthermore, we have $\left\|\nabla U_{\varepsilon} \right\|_{2}^{2}=||U_{\varepsilon}||_6^6=\mathcal{S}^{\frac{3}{2}}$. Take a radially decreasing cut-off function $\eta \in C_{c}^{\infty}\left(\mathbb{R}^3\right)$ such that $\eta\equiv1$ in $B_1(0)$, $\eta\equiv0$ in $B^c_2(0):=\mathbb{R}^3\setminus B_2(0)$, and let
$$
u_{\varepsilon}(x):=\eta(x) U_{\varepsilon}(x), \quad \text { and } \quad v_{\varepsilon}(x):=c \frac{u_{\varepsilon}(x)}{||u_{\varepsilon}||_{2}},~~~~ \forall\varepsilon\in(0,1).
$$
Clearly, $v_{\varepsilon}\in S_{c,r}$, by Lemma \ref{LeM3.4}, there exists a unique $t_{v_{\varepsilon},\mu}\in\R$ such that
$$
m_{r}(c, \mu)=\inf _{\mathcal{P}_{c, \mu} \cap S_{c,r}} E_{\mu} \leq E_{\mu}\left(t_{v_{\varepsilon}, \mu} \star v_{\varepsilon}\right)=\max _{s \in \mathbb{R}} E_{\mu}\left(s \star v_{\varepsilon}\right)=\max _{s \in \mathbb{R}} \Psi_{v_{\varepsilon}}^{\mu}(s), \quad \forall \varepsilon>0.
$$
So, it is sufficient to prove $ \max _{s \in \mathbb{R}} \Psi_{v_{\varepsilon}}^{\mu}(s)\!=\!E_{\mu}\left(t_{v_{\varepsilon}, \mu} \star v_{\varepsilon}\right)\!<\! \frac{a\mathcal{S}\Lambda}{3}\!+\!\frac{b\mathcal{S}^2{\Lambda}^2}{12}$.

To this end, we need some integral estimates. Similar to Lemma 1.46 in \cite{Mwlm} or Lemma A.1 in \cite{NSaE}, we can derive that
\begin{align}  \label{inTegral4.2} 
&||\nabla u_{\varepsilon}||_{2}^2 \!=\! {\mathcal{S}}
^{\frac{3}{2}}\!+\!O(\varepsilon),~~~~
||u_{\varepsilon}||_6^6\!=\!{\mathcal{S}}
^{\frac{3}{2}}\!+\!O(\varepsilon^3),
~~~~||u_{\varepsilon}||_{2}^2=O(\varepsilon),
||u_{\varepsilon}||_{q}^q=O(\varepsilon^{3-\frac{q}{2}}), \nonumber \\
&||\nabla u_{\varepsilon} ||_{2}^2 \geq C_1,~~~~~~~~\frac{1}{C_2}\geq {||u_{\varepsilon}||}_6^6\geq C_2,~~~~~~~~||u_{\varepsilon}||_{2}^2 \geq C_3\varepsilon
\end{align}
for some constants $C_i>0$ ($i=1,2,3$), which are independent of $\varepsilon$, $c$ and $\mu$.

Next, we prove $ \max _{s \in \mathbb{R}} \Psi_{v_{\varepsilon}}^{0}(s)=E_{0}\left(t_{v_{\varepsilon},0} \star v_{\varepsilon}\right)=\frac{a\mathcal{S}\Lambda}{3}\!+\!\frac{b\mathcal{S}^2{\Lambda}^2}{12}
+O(\varepsilon^{\frac{1}{2}})$. Since
$$
\Psi_{v_{\varepsilon}}^{0}(s)=\frac{a}{2}e^{2s} ||\nabla v_{\varepsilon} ||_{2}^2+\frac{b}{4}e^{4s} ||\nabla v_{\varepsilon} ||_{2}^4-\frac{e^{ 6s}}{6} {||v_{\varepsilon}||}_6^6,
$$
we see that $\Psi_{v_{\varepsilon}}^{0}(s)$ has a unique maximum point $t_{v_{\varepsilon},0}$ such that
$$e^{2t_{v_{\varepsilon},0}}\!=\!\frac{b{||\nabla v_{\varepsilon}||}_{2}^{4}}{2||v_{\varepsilon}||
_6^6}+\sqrt{\frac{a{||\nabla v_{\varepsilon}||}_{2}^{2}}{||v_{\varepsilon}||
_6^6}+\frac{b^2{||\nabla v_{\varepsilon}||}_{2}^{8}}{4||v_{\varepsilon}||
_6^{12}}}.$$
Then, we drive that
\begin{align*}
\frac{c^2e^{2t_{v_{\varepsilon},0}}}{{||  u_{\varepsilon}||}_{2}^{2}}&=\frac{b{||\nabla u_{\varepsilon}||}_{2}^{4}}{2||u_{\varepsilon}||
_6^6}+\sqrt{\frac{a{||\nabla u_{\varepsilon}||}_{2}^{2}}{||u_{\varepsilon}||
_6^6}+\frac{b^2{||\nabla u_{\varepsilon}||}_{2}^{8}}{4||u_{\varepsilon}||
_6^{12}}}\\
&=\frac{b{\big({\mathcal{S}}
^{\frac{3}{2}}\!+\!O(\varepsilon)\big)}^{2}}{2\big({\mathcal{S}}
^{\frac{3}{2}}\!+\!O(\varepsilon^3)\big)}+\sqrt{\frac{a\big({\mathcal{S}}
^{\frac{3}{2}}\!+\!O(\varepsilon)\big) }{{\mathcal{S}}
^{\frac{3}{2}}\!+\!O(\varepsilon^3)}+\frac{b^2{\big({\mathcal{S}}
^{\frac{3}{2}}\!+\!O(\varepsilon)\big)}^{4}}{4\big({\mathcal{S}}
^{\frac{3}{2}}\!+\!O(\varepsilon^3)\big)^{2}}} \\
&=\frac{b{\mathcal{S}}
^{\frac{3}{2}}}{2}+\sqrt{a+\frac{b^2{\mathcal{S}}
^{3}}{4}\!+\!O(\varepsilon) }\!+\!O(\varepsilon)\\
&\leq\frac{b{\mathcal{S}}
^{\frac{3}{2}}}{2}+\sqrt{a+\frac{b^2{\mathcal{S}}
^{3}}{4}  }\!+\!O(\varepsilon^{\frac{1}{2}})
=\frac{\Lambda}{\sqrt{\mathcal{S}}}\!+\!O(\varepsilon^{\frac{1}{2}}),
\end{align*}
where $\Lambda=\frac{b{\mathcal{S}}^2}{2}
+\sqrt{a\mathcal{S}+\frac{b^2{\mathcal{S}}^4}{4}}$. This leads to that
\begin{align}\label{tDEFI11}
\sup_{s \in \mathbb{R}} \Psi_{v_{\varepsilon}}^{0}(s)&=\Psi_{v_{\varepsilon}}^{0}(t_{v_{\varepsilon},0})
=\frac{a}{2}\frac{c^2e^{2t_{v_{\varepsilon},0}}}{{||  u_{\varepsilon}||}_{2}^{2}} ||\nabla u_{\varepsilon} ||_{2}^2+\frac{b}{4}\frac{c^4e^{4t_{v_{\varepsilon},0}}}{{||  u_{\varepsilon}||}_{2}^{4}} ||\nabla u_{\varepsilon} ||_{2}^4-\frac{c^6e^{6t_{v_{\varepsilon},0}}}{{||  u_{\varepsilon}||}_{2}^{6}}  \frac{{||u_{\varepsilon}||}_6^6}{6}
\nonumber\\
=&\frac{a}{2}\frac{c^2e^{2t_{v_{\varepsilon},0}}}{{||  u_{\varepsilon}||}_{2}^{2}} \Big({\mathcal{S}}
^{\frac{3}{2}}\!+\!O(\varepsilon)\Big)+\frac{b}{4}\frac{c^4e^{4t_{v_{\varepsilon},0}}}{{||  u_{\varepsilon}||}_{2}^{4}} \Big({\mathcal{S}}
^{\frac{3}{2}}\!+\!O(\varepsilon)\Big)^2-\frac{c^6e^{6t_{v_{\varepsilon},0}}}{{||  u_{\varepsilon}||}_{2}^{6}}  \frac{\big({\mathcal{S}}
^{\frac{3}{2}}\!+\!O(\varepsilon^3)\big)}{6}
\nonumber\\
\leq&\frac{a}{2}\Big( \frac{\Lambda}{\sqrt{\mathcal{S}}}\!+\!O(\varepsilon^{\frac{1}{2}}) \Big) \Big({\mathcal{S}}
^{\frac{3}{2}}\!+\!O(\varepsilon)\Big)+\frac{b}{4}\Big( \frac{\Lambda}{\sqrt{\mathcal{S}}}\!+\!O(\varepsilon^{\frac{1}{2}}) \Big)^2 \Big({\mathcal{S}}
^{3}\!+\!O(\varepsilon)\Big)    \nonumber\\
&-\Big( \frac{b{\mathcal{S}}
^{\frac{3}{2}}}{2}+\sqrt{a+\frac{b^2{\mathcal{S}}
^{3}}{4}\!+\!O(\varepsilon) }\!+\!O(\varepsilon) \Big)^3  \frac{\big({\mathcal{S}}
^{\frac{3}{2}}\!+\!O(\varepsilon^3)\big)}{6}
\nonumber\\
\leq&\frac{a\Lambda\mathcal{S}}{2}
+\frac{b\Lambda^2\mathcal{S}^2}{4}\!+\!O(\varepsilon^{\frac{1}{2}}) -\Big( \frac{b{\mathcal{S}}
^{\frac{3}{2}}}{2}+\sqrt{a+\frac{b^2{\mathcal{S}}
^{3}}{4}  }  \Big)^3  \frac{ {\mathcal{S}}
^{\frac{3}{2}} }{6}
\nonumber\\
=&\frac{a\Lambda\mathcal{S}}{2}
+\frac{b\Lambda^2\mathcal{S}^2}{4}-\frac{\Lambda^3}{6}
\!+\!O(\varepsilon^{\frac{1}{2}})=\frac{a\mathcal{S}\Lambda}{3}\!
+\!\frac{b\mathcal{S}^2{\Lambda}^2}{12}
\!+\!O(\varepsilon^{\frac{1}{2}}).
\end{align}

Finally, we estimate $t_{v_{\varepsilon},\mu}$. From $(\Psi_{v_{\varepsilon}}^{\mu})'(t_{v_{\varepsilon},\mu})
=P_{\mu}(t_{v_{\varepsilon},\mu}\star v_{\varepsilon})=0$, we have
$$ ae^{2t_{v_{\varepsilon},\mu}} ||\nabla v_{\varepsilon} ||_{2}^2+be^{4t_{v_{\varepsilon},\mu}} ||\nabla v_{\varepsilon} ||_{2}^4=\mu \delta_{q} e^{ q\delta_{q} t_{v_{\varepsilon},\mu} } {||v_{\varepsilon}||}_q^q+e^{ 6t_{v_{\varepsilon},\mu}} {||v_{\varepsilon}||}_6^6\geq e^{ 6t_{v_{\varepsilon},\mu}} {||v_{\varepsilon}||}_6^6.$$
It results to that $e^{ 2 t_{v_{\varepsilon},\mu} }\leq e^{ 2 t_{v_{\varepsilon},0} }$, so we have
\begin{align} \label{lign7.1}
  e^{  2t_{v_{\varepsilon},\mu} }\leq\frac{b{||\nabla v_{\varepsilon}||}_{2}^{4}}{2||v_{\varepsilon}||
_6^6}+\sqrt{\frac{a{||\nabla v_{\varepsilon}||}_{2}^{2}}{||v_{\varepsilon}||
_6^6}+\frac{b^2{||\nabla v_{\varepsilon}||}_{2}^{8}}{4||v_{\varepsilon}||
_6^{12}}}\leq \frac{b{||\nabla v_{\varepsilon}||}_{2}^{4}}{||v_{\varepsilon}||
_6^6}+\frac{\sqrt{a}{||\nabla v_{\varepsilon}||}_{2} }{||v_{\varepsilon}||
_6^3}.
\end{align}
On the other hand, we have
\begin{align*}
 e^{ 4t_{v_{\varepsilon},\mu}}\!=\!\frac{ a  ||\nabla v_{\varepsilon} ||_{2}^2}{
 {||v_{\varepsilon}||}_6^6 }\!+\!\frac{ b  ||\nabla v_{\varepsilon} ||_{2}^4}{
 {||v_{\varepsilon}||}_6^6 }e^{ 2 t_{v_{\varepsilon},\mu} }
\!-\!\mu\delta_q\frac{ {||v_{\varepsilon}||}_q
^q }
{{||v_{\varepsilon}||}_6
^6}e^{(q\delta_q-2) t_{v_{\varepsilon},\mu} }\!\geq\! \frac{ b  ||\nabla v_{\varepsilon} ||_{2}^4}{
 {||v_{\varepsilon}||}_6^6 }e^{ 2 t_{v_{\varepsilon},\mu} }
\!-\!\mu\delta_q\frac{ {||v_{\varepsilon}||}_q
^q }
{{||v_{\varepsilon}||}_6
^6}e^{(q\delta_q-2) t_{v_{\varepsilon},\mu} }.
\end{align*} 
By the inequality $(\ell_1+\ell_2)^{\frac{q\delta_q-4}{2}}\leq \ell_1^{\frac{q\delta_q-4}{2}}+\ell_2^{\frac{q\delta_q-4}{2}}$ for $\ell_1,\ell_2\geq0$ and (\ref{lign7.1}), we have
\begin{align*}
 e^{ 2t_{v_{\varepsilon},\mu}}&\!\geq\! \frac{ b  ||\nabla v_{\varepsilon} ||_{2}^4}{
 {||v_{\varepsilon}||}_6^6 }
\!-\!\mu\delta_q\frac{ {||v_{\varepsilon}||}_q
^q }{{||v_{\varepsilon}||}_6
^6}e^{(q\delta_q-4) t_{v_{\varepsilon},\mu} } =\frac{b {||u_{\varepsilon}||}_{2}^{2}}{c^{2}}
\frac{||\nabla u_{\varepsilon} ||_{2}^4 } {{||u_{\varepsilon}||}_6^6}-\mu\delta_q \frac{{||u_{\varepsilon}||}_{2}^{6-q}}{c^{6-q}} \frac{{||u_{\varepsilon}||}_q
^q}{{||u_{\varepsilon}||}_6
^6}e^{(q\delta_q-4) t_{v_{\varepsilon},\mu} }\\
&\geq\frac{b {||u_{\varepsilon}||}_{2}^{2}}{c^{2}}
\frac{||\nabla u_{\varepsilon} ||_{2}^4 } {{||u_{\varepsilon}||}_6^6}-\mu\delta_q \frac{{||u_{\varepsilon}||}_{2}^{6-q}}{c^{6-q}} \frac{{||u_{\varepsilon}||}_q
^q}{{||u_{\varepsilon}||}_6
^6}\Big[\frac{b{||\nabla v_{\varepsilon}||}_{2}^{4}}{||v_{\varepsilon}||
_6^6}+\frac{\sqrt{a}{||\nabla v_{\varepsilon}||}_{2} }{||v_{\varepsilon}||
_6^3}\Big]^{ \frac{q\delta_q-4}{2} }
\\
&\geq\frac{b {||u_{\varepsilon}||}_{2}^{2}}{c^{2}}
\frac{||\nabla u_{\varepsilon} ||_{2}^4 } {{||u_{\varepsilon}||}_6^6}-\mu\delta_q   \frac{{||u_{\varepsilon}||}_{2}^{6-q}}{c^{6-q}} \frac{{||u_{\varepsilon}||}_q
^q}{{||u_{\varepsilon}||}_6
^6}\Big[\Big(\frac{b{||\nabla v_{\varepsilon}||}_{2}^{4}}{||v_{\varepsilon}||
_6^6}\Big)^{ \frac{q\delta_q-4}{2} }+\Big(\frac{\sqrt{a}{||\nabla v_{\varepsilon}||}_{2} }{||v_{\varepsilon}||
_6^3}\Big)^{ \frac{q\delta_q-4}{2} }\Big]
\\
&=\frac{b {||u_{\varepsilon}||}_{2}^{2}}{c^{2}}
\frac{||\nabla u_{\varepsilon} ||_{2}^4 } {{||u_{\varepsilon}||}_6^6}-\mu\delta_q  \frac{{||u_{\varepsilon}||}_{2}^{2-q(1-\delta_q)}}{c^{2-q(1-\delta_q)}} \frac{{||u_{\varepsilon}||}_q
^q}{{||u_{\varepsilon}||}_6
^6}\Big[\Big(\frac{b{||\nabla u_{\varepsilon}||}_{2}^{4}}{||u_{\varepsilon}||
_6^6}\Big)^{ \frac{q\delta_q-4}{2} }+\Big(\frac{\sqrt{a}{||\nabla u_{\varepsilon}||}_{2} }{||u_{\varepsilon}||
_6^3}\Big)^{ \frac{q\delta_q-4}{2} }\Big]
\\
&=\frac{{||u_{\varepsilon}||}_{2}^{2}}{c^{2}} \Big\{
\frac{b ||\nabla u_{\varepsilon} ||_{2}^4 } {{||u_{\varepsilon}||}_6^6}-\frac{\mu\delta_q c^{q(1-\delta_q)} }{{||u_{\varepsilon}||}_6
^6} \frac{{||u_{\varepsilon}||}_q
^q}
{{||u_{\varepsilon}||}_{2}^{ q(1-\delta_q)}} \Big[\Big(\frac{b{||\nabla u_{\varepsilon}||}_{2}^{4}}{||u_{\varepsilon}||
_6^6}\Big)^{ \frac{q\delta_q-4}{2} }+\Big(\frac{\sqrt{a}{||\nabla u_{\varepsilon}||}_{2} }{||u_{\varepsilon}||
_6^3}\Big)^{ \frac{q\delta_q-4}{2} }\Big] \Big\}
\\
&\geq\frac{{||u_{\varepsilon}||}_{2}^{2}}{c^{2}} \Big\{
C_4-\mu\delta_q c^{q(1-\delta_q)}C_5 \frac{{||u_{\varepsilon}||}_q
^q}
{{||u_{\varepsilon}||}_{2}^{ q(1-\delta_q)}}   \Big\},
\end{align*}
where $C_4=C_4(b,\mathcal{S})>0$ and $C_5=C_5(a,b,q,\mathcal{S})>0$. Utilizing \eqref{inTegral4.2}, we have $\frac{ {||u_{\varepsilon}||}_q^q}{{||u_{\varepsilon}||}_{2}^{q(1-\delta_q)}    }=O(\varepsilon^{\frac{6-q}{4}})$. Consequently, we get
\begin{align} \label{ebounD1}
 e^{ 2t_{v_{\varepsilon},\mu}} \geq\frac{{||u_{\varepsilon}||}_{2}^{2}}{c^{2}} \Big\{
C_4-O(\varepsilon^{\frac{6-q}{4}})  \mu\delta_q c^{q(1-\delta_q)}C_5   \Big\}\geq\frac{{||u_{\varepsilon}||}_{2}^{2}}{c^{2}} \frac{C_4}{4}
\end{align}
for $\varepsilon\!>\!0$ sufficiently small. Then \eqref{ebounD1} gives
$e^{ t_{v_{\varepsilon},\mu}}\!\geq\! C \frac{{||u_{\varepsilon}||}_{2}}{c}$
for some constant $C\!=\!\frac{\sqrt{C_4}}{2}$. Since $q\!\in \! (\frac{14}{3},6)$, we get
\begin{align*}
\sup_{s \in \mathbb{R}} \Psi_{v_{\varepsilon}}^{\mu}(s)&=\Psi_{v_{\varepsilon}}^{\mu}(t_{v_{\varepsilon}, \mu})=\Psi_{v_{\varepsilon}}^{0}(t_{v_{\varepsilon}, \mu})-\mu \frac{e^{ q\delta_{q} t_{v_{\varepsilon}, \mu}}}{q} {||v_{\varepsilon}||}_q^q
\leq \sup_{s \in \mathbb{R}}\Psi_{v_{\varepsilon}}^{0}(s)-\mu \frac{e^{ q\delta_{q} t_{v_{\varepsilon}, \mu}}}{q} {||v_{\varepsilon}||}_q^q\\
&=\Psi_{v_{\varepsilon}}^{0}(t_{v_{\varepsilon}, 0})-\mu \frac{e^{ q\delta_{q} t_{v_{\varepsilon}, \mu}}}{q} {||v_{\varepsilon}||}_q^q \leq \frac{a\mathcal{S}\Lambda}{3}\!+\!\frac{b\mathcal{S}^2{\Lambda}^2}{12}
\!+\!O(\varepsilon^{\frac{1}{2}})-\frac{\mu C^{q\delta_q} c^{q(1-\delta_q)} }{q}\frac{ {||u_{\varepsilon}||}_q^q }{{||u_{\varepsilon}||}_2
^{q(1-\delta_q)}} \\
&\leq \frac{a\mathcal{S}\Lambda}{3}\!+\!\frac{b\mathcal{S}^2{\Lambda}^2}{12}
\!+\!O(\varepsilon^{\frac{1}{2}})-O(\varepsilon^{\frac{6-q}{4}})
<\frac{a\mathcal{S}\Lambda}{3}\!+\!\frac{b\mathcal{S}^2{\Lambda}^2}{12}.
\end{align*}
\end{proof}

\subsection{The existence and asymptotic results for $\frac{14}{3}\!<\!q\!<\!p\!\leq\!6$}
In this Subsection, we first prove the existence results, i.e. Theorem \ref{tTH1.3}-(1),(2) and Theorem \ref{tTH1.4}-(1),(2). Then, we prove the asymptotic results, i.e. Theorem \ref{tTH1.3}-(3) and Theorem \ref{tTH1.4}-(3).\\

To prove the asymptotic results in Theorem \ref{tTH1.4}, we need the following lemma.
\begin{lemma} \label{lemma6.5.1}
Let $a\!>\!0$, $b\!>\!0$, $c\!>\!0$, $\!p\!=\!6$ and $\mu\!=\!0$. Then,
\begin{align}  \label{algn6.4.1}
{m}_r(c,0)\!=\!{m}(c,0):=\inf _{\mathcal{P}_{c,0}} E_0=\inf _{u \in S_c} \max _{s \in \mathbb{R}} E_{0}(s \star u)=\frac{a\mathcal{S}\Lambda}{3}\!+\!\frac{b\mathcal{S}^2{\Lambda}^2}{12},
\end{align}
where $\Lambda=\frac{b{\mathcal{S}}^2}{2}
+\sqrt{a\mathcal{S}+\frac{b^2{\mathcal{S}}^4}{4}}$.
\end{lemma}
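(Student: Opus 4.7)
The plan is to sandwich $m(c,0)$ between matching lower and upper bounds: the Pohozaev identity combined with the Sobolev inequality yields the lower bound, while the truncated Aubin--Talenti family $v_\varepsilon$ already introduced in Lemma \ref{lemma7.5} supplies the matching upper bound. The minimax identification $\inf_{u\in S_c}\max_s E_0(s\star u)=\inf_{\mathcal{P}_{c,0}}E_0$ then follows from the uniqueness of the maximizer of the fiber map.

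For the lower bound, I would start from the observation that any $u\in\mathcal{P}_{c,0}$ satisfies $P_0(u)=a\|\nabla u\|_2^2+b\|\nabla u\|_2^4-\|u\|_6^6=0$ (using $\delta_6=1$ and $\mu=0$), so substitution gives
$$E_0(u)=\frac{a}{3}\|\nabla u\|_2^2+\frac{b}{12}\|\nabla u\|_2^4.$$
Writing $\alpha:=\|\nabla u\|_2^2$ and combining $a\alpha+b\alpha^2=\|u\|_6^6$ with the Sobolev bound $\|u\|_6^6\leq\mathcal{S}^{-3}\alpha^3$ produces the quadratic inequality $\alpha^2-b\mathcal{S}^3\alpha-a\mathcal{S}^3\geq 0$, equivalent to
$$\alpha\geq\frac{b\mathcal{S}^3+\sqrt{b^2\mathcal{S}^6+4a\mathcal{S}^3}}{2}=\mathcal{S}\Lambda.$$
Since $\alpha\mapsto\tfrac{a}{3}\alpha+\tfrac{b}{12}\alpha^2$ is strictly increasing on $[0,\infty)$, this yields $E_0(u)\geq\frac{a\mathcal{S}\Lambda}{3}+\frac{b\mathcal{S}^2\Lambda^2}{12}$ for every $u\in\mathcal{P}_{c,0}$, hence the desired lower bound on $m(c,0)$.

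For the upper bound I would plug the radial test function $v_\varepsilon\in S_{c,r}$ from Lemma \ref{lemma7.5} into the fiber map. Since the $\mu$-term is absent when $\mu=0$, the calculation already performed to obtain \eqref{tDEFI11} directly gives
$$\max_{s\in\R}E_0(s\star v_\varepsilon)\leq\frac{a\mathcal{S}\Lambda}{3}+\frac{b\mathcal{S}^2\Lambda^2}{12}+O(\varepsilon^{1/2}).$$
The purely cubic (in $t=e^{2s}$) structure of $(\Psi_u^0)'$ shows that $\Psi_u^0$ has a unique critical point which is a strict maximum and lies on $\mathcal{P}_{c,0}$ (by Corollary \ref{coroll2.1}), so this is a valid $\mathcal{P}_{c,0}\cap S_{c,r}$ value. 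Letting $\varepsilon\to 0^+$ gives $m_r(c,0)\leq\frac{a\mathcal{S}\Lambda}{3}+\frac{b\mathcal{S}^2\Lambda^2}{12}$.

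The three inequalities $m_r(c,0)\geq m(c,0)\geq\frac{a\mathcal{S}\Lambda}{3}+\frac{b\mathcal{S}^2\Lambda^2}{12}\geq m_r(c,0)$ (the first being trivial from $\mathcal{P}_{c,0}\cap S_{c,r}\subset\mathcal{P}_{c,0}$) close the loop and deliver all three equalities. For the minimax identity, every $u\in S_c$ admits a unique $t_u\in\R$ with $t_u\star u\in\mathcal{P}_{c,0}$ and $\max_s E_0(s\star u)=E_0(t_u\star u)$, so taking infima on both sides yields $\inf_{u\in S_c}\max_s E_0(s\star u)=\inf_{\mathcal{P}_{c,0}}E_0=m(c,0)$. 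The only real delicacy is tracking $\varepsilon$-asymptotics through the $L^2$-normalization $v_\varepsilon=cu_\varepsilon/\|u_\varepsilon\|_2$ with $\|u_\varepsilon\|_2^2=O(\varepsilon)$, which is why one cannot substitute the Aubin--Talenti profile directly; fortunately this is exactly the bookkeeping already performed inside Lemma \ref{lemma7.5}, so here it can be quoted rather than redone.
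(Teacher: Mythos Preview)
Your proof is correct and takes a genuinely different route from the paper's. The paper first establishes the minimax identity by imitating Lemma \ref{lem6.7}, then computes $\inf_{u\in S_c}\max_s E_0(s\star u)$ by explicitly solving for $e^{2t_{u,0}}$, using the density of $H^1(\R^3)$ in $D^{1,2}(\R^3)$ to pass the infimum to $D^{1,2}$, and evaluating at the exact Aubin--Talenti profile $U_\varepsilon$; finally it proves $m_r(c,0)=m(c,0)$ by a separate rearrangement argument. Your approach is more economical: the lower bound via $P_0(u)=0\Rightarrow E_0(u)=\tfrac{a}{3}\alpha+\tfrac{b}{12}\alpha^2$ together with the quadratic inequality $\alpha^2-b\mathcal{S}^3\alpha-a\mathcal{S}^3\geq 0$ is cleaner than tracking $e^{2t_{u,0}}\|\nabla u\|_2^2$ through the density argument, and recycling the truncated radial $v_\varepsilon$ and the estimate \eqref{tDEFI11} from Lemma \ref{lemma7.5} for the upper bound gives you $m_r(c,0)=m(c,0)$ for free (since $v_\varepsilon\in S_{c,r}$), eliminating the need for the rearrangement step. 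The paper's version is more self-contained and yields the additional information $\inf_{u\in S_c}e^{2t_{u,0}}\|\nabla u\|_2^2=\mathcal{S}\Lambda$ and $\inf_{u\in S_c}e^{2t_{u,0}}\|u\|_6^2=\Lambda$, but for the stated lemma your sandwich argument is both shorter and more transparent.
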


\begin{proof}
Imitate the proof of Lemma \ref{lem6.7}, we get $\inf _{\mathcal{P}_{c,0}} E_0=\inf _{u \in S_c} \max _{s \in \mathbb{R}} E_{0}(s \star u)$.
Now, we prove that $\inf _{u \in S_c} \max _{s \in \mathbb{R}} E_{0}(s \star u)=\frac{a\mathcal{S}\Lambda}{3}\!+\!\frac{b\mathcal{S}^2{\Lambda}^2}{12}$. In fact, direct calculation implies that $\max _{s \in \mathbb{R}} E_{0}(s \star u)=\Psi_{u}^{0}(t_{u,0})$ with $$e^{2t_{u,0}}\!=\!\frac{b{||\nabla u||}_{2}^{4}}{2||u||
_6^6}+\sqrt{\frac{a{||\nabla u||}_{2}^{2}}{||u||
_6^6}+\frac{b^2{||\nabla u||}_{2}^{8}}{4||u||
_6^{12}}}.$$
We claim that
\begin{align} \label{eStIofE}
\inf _{u \in S_c}e^{2t_{u,0}}{||\nabla u||}_{2}^{2}\!=\!\inf _{u \in S_c} \Big\{\frac{b{||\nabla u||}_{2}^{6}}{2||u||
_6^6}+\sqrt{\frac{a{||\nabla u||}_{2}^{6}}{||u||
_6^6}+\frac{b^2{||\nabla u||}_{2}^{12}}{4||u||
_6^{12}}}\Big\}\!=\!\mathcal{S}\Lambda.
\end{align}
On the one hand, by density of ${H}^{1}(\mathbb{R}^{3})$ in ${D}^{1,2}(\mathbb{R}^{3})$ (see \cite{NSaE}), we get
\begin{align*}
\inf _{u \in S_c}e^{2t_{u,0}}{||\nabla u||}_{2}^{2}&\!=\!\inf_{u \in {{H}}^{1}(\mathbb{R}^{3})\setminus \{0\}}e^{2t_{u,0}}{||\nabla u||}_{2}^{2}\!=\!\inf_{u \in {{D}}^{1,2}(\mathbb{R}^{3})\setminus \{0\}}e^{2t_{u,0}}{||\nabla u||}_{2}^{2}\\
&\!\geq\! \inf_{u \in {{D}}^{1,2}(\mathbb{R}^{3})\setminus \{0\}} \frac{b{||\nabla u||}_{2}^{6}}{2||u||
_6^6}+ \sqrt{\inf_{u \in {{D}}^{1,2}(\mathbb{R}^{3})\setminus \{0\}}\frac{a{||\nabla u||}_{2}^{6}}{||u||
_6^6}+\inf_{u \in {{D}}^{1,2}(\mathbb{R}^{3})\setminus \{0\}}\frac{b^2{||\nabla u||}_{2}^{12}}{4||u||
_6^{12}}}\\
&=\frac{b{\mathcal{S}}^3}{2}
+\sqrt{a\mathcal{S}^3+\frac{b^2{\mathcal{S}}^6}{4}}=\mathcal{S}\Lambda.
\end{align*}
On the other hand, since $\mathcal{S}\!=\!\inf _{u \in {{D}}^{1,2}(\mathbb{R}^{3})\!\setminus\! \{0\} }    \frac{\left\|\nabla u\right\|_{2}^{2}}{||u||_{6}^{2}}$ is attained by $U_{\varepsilon}(x)\!=\!3^{\frac{1}{4}}\left(\frac{\varepsilon}
{\varepsilon^{2}+|x|^{2}}\right)^{\frac{1}{2}}$ for $\varepsilon>0$, we have
\begin{align*}
&\frac{b{\mathcal{S}}^3}{2}
+\sqrt{a\mathcal{S}^3+\frac{b^2{\mathcal{S}}^6}{4}}=\frac{b{||\nabla U_{\varepsilon}||}_{2}^{6}}{2||U_{\varepsilon}||
_6^6}+\sqrt{\frac{a{||\nabla U_{\varepsilon}||}_{2}^{6}}{||U_{\varepsilon}||
_6^6}+\frac{b^2{||\nabla U_{\varepsilon}||}_{2}^{12}}{4||U_{\varepsilon}||
_6^{12}}}\\
&=e^{2t_{U_{\varepsilon},0}}{||\nabla U_{\varepsilon}||}_{2}^{2}\!\geq\!\inf_{u \in {{D}}^{1,2}(\mathbb{R}^{3})\setminus \{0\}}e^{2t_{u,0}}{||\nabla u||}_{2}^{2}.
\end{align*}
Then \eqref{eStIofE} is true. Similarly, we can prove $\inf _{u \in S_c}e^{2t_{u,0}}{|| u||}_{6}^{2}\!=\!\Lambda$. These facts imply that
\begin{align*} 
\inf _{u \in S_c}\Psi_{u}^{0}(t_{u,0})=\inf _{u \in S_c} \Big\{\frac{a}{2}e^{2t_{u,0}} ||\nabla u ||_{2}^2+\frac{b}{4}e^{4t_{u,0}} ||\nabla u ||_{2}^4-\frac{e^{ 6t_{u,0}}}{6} {||u||}_6^6\Big\}=\frac{a\mathcal{S}\Lambda}{3}\!+\!\frac{b\mathcal{S}^2{\Lambda}^2}{12}.
\end{align*}
Finally, we show that $\inf _{\mathcal{P}_{c,0}}E_0\!=\!\inf _{ \mathcal{P}_{c,0} \cap S_{c,r} } E_{0}$. Otherwise, there exists $u\!\in\! \mathcal{P}_{c,0} \!\setminus \!S_{c,r}$ with $E_{0}(u)\!<\!\inf _{ \mathcal{P}_{c,0} \cap S_{c,r} } E_{0}$. Then we let $v \!:=\!{|u|}^*$, the symmetric decreasing rearrangement of $|u|$, which lies in $S_{c,r}$.
Then, we have $E_{0}\left(v\right)\!\leq\! E_{0}\left(u\right)$ and $P_{0}\left(v\right)\!\leq\! P_{0}\left(u\right)\!=\!0$. If $P_{0}(v)\!=\!0$, then $E_{0}(u)\!<\!\inf _{ \mathcal{P}_{c,0} \cap S_{c,r} } E_{0}\!\leq\!E_{0}(v)$, a contradiction, and hence we get $P_{0}(v)\!<\!0$. By Lemma \ref{LeM3.4}, we have $t_v\!<\!0$. However, we get a contradiction that
\begin{align*}
E_{0}\left(u\right)<\inf _{ \mathcal{P}_{c,0} \cap S_{c,r} } E_{0}\leq E_{0}\left(t_v \star v\right)=\frac{a}{4}e^{2t_{v}} ||\nabla v ||_{2}^2+\frac{1}{12}e^{6t_v}
||v||_6^6 \leq\frac{a}{4}  ||\nabla u ||_{2}^2+\frac{1}{12}
||u||_6^6=E_{0}\left(u\right),
\end{align*}
where we used the fact that $t_v \star v$ and $u$ lies in $\mathcal{P}_{c,0}$. This proves that $m_r(c,0)=m(c,0)$.
\end{proof}

Based on Lemmas \ref{LeM3.4}-\ref{lEM7.4} and Proposition \ref{prp4.1}, we can prove Theorem \ref{tTH1.3}.

\noindent \textbf{Proof of Theorem \ref{tTH1.3}:}
The proof is different from that of Theorem \ref{th1.1}-(2), we should revise  the minimax class as
$$
\Gamma :=\left\{\gamma(\tau)=\big(\zeta(\tau),\beta(\tau)\big) \in C\left([0,1], \mathbb{R}\times S_{c,r}\right) ; \gamma(0) \in (0,\bar{A_k}), \gamma(1) \in (0, E_{\mu}^{0})\right\}.
$$
Then, it is standard as the proof of Theorem 1.6 in \cite{NsAe} that $E_{\mu}|_{S_c}$ has a critical point $\hat{u}_{c,\mu}$ at Mountain Pass level $\sigma(c, \mu)\!>\!0$ and $\hat{u}_{c,\mu}$ solves $(1.1)_{\hat{\lambda}_{c,\mu}}$ for some $\hat{\lambda}_{c,\mu}\!<\!0$. Similar to Lemma \ref{lemma6.5.1}, we get $\inf _{\mathcal{P}_{c,\mu}}E_\mu\!=\!\inf _{ \mathcal{P}_{c,\mu} \cap S_{c,r} } E_{\mu}$, then $\hat{u}_{c,\mu}$ is a ground state of $E_{\mu}|_{S_c}$. The proof of the asymptotic result is similar to that of Theorem \ref{th1.1}-(5). \qed

Theorem \ref{tTH1.4} is concerned with the Sobolev critical case $p\!=\!6$.  Proposition \ref{prp4.2} and Lemma \ref{lemma7.5} are crucial in the analysis. We first prove the existence results. \\
\noindent \textbf{Proof of Theorem \ref{tTH1.4}-(1),(2):}
Lemma \ref{lemma7.5} gives $m_{r}(c, \mu)\!<\!\frac{a\mathcal{S}\Lambda}{3}\!+\!\frac{b\mathcal{S}^2{\Lambda}^2}{12}$, the rest of the proof is the same as that of Theorem \ref{tTH1.3}, but we shall replace Proposition \ref{prp4.1} by Proposition \ref{prp4.2}.  \qed

\noindent \textbf{Proof of Theorem \ref{tTH1.4}-(3):}
Let us consider $\left\{\hat{u}_{\mu} : 0<\mu<\overline{\mu}\right\}$, with $\overline{\mu}$ small enough. From Theorem \ref{tTH1.4}-(1)(2) and Lemma  \ref{lemma6.5.1}, we know that
\begin{equation} \label{eqq6.11}
\begin{aligned}
\frac{a\mathcal{S}\Lambda}{3}\!+\!\frac{b\mathcal{S}^2{\Lambda}^2}{12}>E_{\mu}\left({\hat{u}_{\mu}} \right)=\frac{a}{4}||\nabla {\hat{u}_{\mu}}||_{2}^2+\mu(\frac{\delta_q}{4}-\frac{1}{q})
||{\hat{u}_{\mu}}||_q^q+\frac{1}{12}||{\hat{u}_{\mu}}||_6^6,
\end{aligned}
\end{equation}
This leads to $||\nabla\hat{u}_{\mu}||_2^{2}\leq C$. So $\left\{\hat{u}_{\mu}\right\}$ is bounded in $H^1$. Since each $\hat{u}_{\mu}$ is a positive radial function in $S_{c}$, we deduce that up to a subsequence $\hat{u}_{\mu} \rightharpoonup \hat{u}$ weakly in $H^1$,  strongly in $L^{r}$ for $2<r<6$ and a.e. on $\mathbb{R}^{3}$, as $\mu \rightarrow 0^{+}$. Using the fact that $\hat{u}_{\mu}$ solves
\begin{equation}\label{eqq6.12}
- \Bigl(a+b  {{{\| {\nabla \hat{u}_{\mu} } \|}_2^2}} \Bigr)\Delta \hat{u}_{\mu}
   =\hat{\lambda}_{\mu} \hat{u}_{\mu}+ {| \hat{u}_{\mu} |^{4}}\hat{u}_{\mu}+\mu {| \hat{u}_{\mu} |^{q - 2}}\hat{u}_{\mu} \text { in } \mathbb{R}^{3}
\end{equation}
for $\hat{\lambda}_{\mu}<0$ and $P_{\mu}\left(\hat{u}_{\mu}\right)=0$, we infer that
$$\hat{\lambda}_{\mu} c^{2}\!=\!a{\| {\nabla \hat{u}_{\mu}} \|}_2^2+b{\| {\nabla \hat{u}_{\mu}} \|}_2^4-
\mu{||\hat{u}_{\mu}||}_q^{q}-{||\hat{u}_{\mu}||}_6^{6}\!=\!\mu(\delta_q-1)||\hat{u}_{\mu}||_{q}^{q} \to 0~~~~~~~~\mbox{as}~~~~\mu\to 0^{+}.$$
Therefore, we have $\mathop {\lim }\limits_{\mu \rightarrow 0^{+}}\{a{\| {\nabla \hat{u}_{\mu}} \|}_2^2+b{\| {\nabla \hat{u}_{\mu}} \|}_2^4\}=\mathop {\lim }\limits_{\mu \rightarrow 0^{+}}{||\hat{u}_{\mu}||}_6^{6}=\ell\geq0$
and $\hat{\lambda}_{\mu}\to0$. So $\mathop {\lim }\limits_{n  \to \infty} {\| {\nabla \hat{u}_{\mu}} \|}_2^2=\sqrt{\frac{\ell}{b}+\frac{a^2}{4b^2}}-\frac{a}{2b}$ and by the Sobolev inequality $\ell\geq b{\mathcal{S}}^2 \ell^{\frac{2}{3}}+a\mathcal{S}\ell^{\frac{1}{3}}$.

If $\ell=0$, then we have
$\hat{u}_{\mu} \rightarrow 0$ strongly in $D^{1,2}({\R}^3)$ and so
$E_{\mu}\left(\hat{u}_{\mu}\right)\to0$ as $\mu\to 0^{+}$. Imitate Lemma \ref{lem6.8}, we can prove that $\sigma(c, \mu)$  is monotone decreasing in $\mu$ and
$$\lim _{\mu \rightarrow 0^{+}} E_{\mu}\left(\hat{u}_{\mu}\right)=\lim _{\mu \rightarrow 0^{+}} \sigma(c, \mu) \geq \sigma(c, \overline{\mu})>0,$$
the contradiction implies that $\ell\not=0$ and so we have $\ell\geq\Lambda^3$. By using the monotonicity of $\sigma(c, \mu)$ and (\ref{algn6.4.1}), we also have
\begin{align*}
\frac{a\mathcal{S}\Lambda}{3}\!+\!\frac{b\mathcal{S}^2{\Lambda}^2}{12}&\leq \frac{\ell}{12}+\frac{a}{4}
\Big(\sqrt{\frac{\ell}{b}+\frac{a^2}{4b^2}}-\frac{a}{2b}\Big)=\lim _{\mu \rightarrow 0^{+}}\left[\frac{a}{4}||\nabla {\hat{u}_{\mu}}||_{2}^2+\frac{1}{12}||{\hat{u}_{\mu}}||_6^6+
\mu(\frac{\delta_q}{4}-\frac{1}{q})
||{\hat{u}_{\mu}}||_q^q\right] \\
&=\lim _{\mu \rightarrow 0^{+}} E_{\mu}\left(\hat{u}_{\mu}\right)=\lim _{\mu \rightarrow 0^{+}} \sigma(c, \mu) \leq m_r(c,0)=\frac{a\mathcal{S}\Lambda}{3}\!+\!\frac{b\mathcal{S}^2{\Lambda}^2}{12},
\end{align*}
which implies that $\ell=\Lambda^3$, $||{\hat{u}_{\mu}}||_6^6 \to {\Lambda}^3$ and ${\| {\nabla \hat{u}_{\mu}} \|}_2^2\to \mathcal{S}{\Lambda}$ as $\mu\to 0^{+}$.  \qed

\end{document}